\newcommand{\bfi}{\begin{fig}}
	\newcommand{\efi}{\end{fig}}
\newcommand{\btab}{\begin{tab}}
	\newcommand{\etab}{\end{tab}}
\newcommand{\barr}{\begin{array}}
	\newcommand{\earr}{\end{array}}
\newcommand{\beq}{\begin{equation}}
\newcommand{\eeq}{\end{equation}}
\newcommand{\bdis}{\begin{displaymath}}
\newcommand{\edis}{\end{displaymath}\noindent}
\newcommand{\bbn}{\mathbb{N}}
\newcommand{\N}{\mathbb{N}}
\newcommand{\bbz}{\mathbb{Z}}
\newcommand{\bbr}{\mathbb{R}}
\newcommand{\R}{\mathbb{R}}
\newcommand{\bbe}{\mathbb{E}}
\newcommand{\E}{\mathbb{E}}
\newcommand{\bbp}{\mathbb{P}}
\newcommand{\bbf}{\mathbb{F}}
\newcommand{\F}{\mathbb{F}}
\newcommand{\bone}{\mathds 1}
\newcommand{\dmid}{\mathrel{\Vert}}
\newcommand{\limp}{\stackrel{\mathbb{P}}{\longrightarrow}}
\newcommand{\limd}{\stackrel{d}{\longrightarrow}}
\newcommand{\nto}{{n\to\infty}}
\newcommand{\lec}{\lesssim}
\newcommand{\halmos}{\quad\hfill $\Box$}
\newcommand{\calc}{{\cal C}}
\newcommand{\calf}{{\cal F}}
\newcommand{\cala}{{\cal A}}
\newcommand{\calv}{{\cal V}}
\newcommand{\calz}{{\cal Z}}
\newcommand{\calw}{{\cal W}}
\newcommand{\al}{{\alpha}}
\newcommand{\la}{{\lambda}}
\newcommand{\La}{{\Lambda}}
\newcommand{\eps}{{\varepsilon}}
\newcommand{\ga}{{\gamma}}
\newcommand{\Ga}{{\Gamma}}
\newcommand{\si}{{\sigma}}
\newcommand{\om}{{\omega}}
\newcommand{\Om}{{\Omega}}
\newcommand{\var}{{\mathrm{Var}}}
\newcommand{\cov}{{\mathrm{Cov}}}
\newcommand{\ov}{\overline}
\newcommand{\un}{\underline}
\newcommand{\wh}{\widehat}
\newcommand{\wt}{\widetilde}
\newcommand{\dd}{\mathrm{d}}
\newcommand{\pd}{\partial}
\newcommand{\di}{\mathrm{dis}}
\newcommand{\tr}{\mathrm{tr}}
\DeclareMathOperator*{\diag}{diag}
\DeclareMathOperator*{\argmin}{argmin}
\newtheorem{Theorem}{Theorem}[section]
\newtheorem{Corollary}[Theorem]{Corollary}
\newtheorem{Lemma}[Theorem]{Lemma}
\newtheorem{Proposition}[Theorem]{Proposition}
\theoremstyle{definition}
\newtheorem{Remark}[Theorem]{Remark}
\newtheorem{Assumption}{Assumption}
\renewcommand{\theAssumption}{\Alph{Assumption}}
\newcommand{\settheoremtag}[1]{
	\let\oldtheAssumption\theAssumption
	\renewcommand{\theAssumption}{#1}
	\g@addto@macro\endAssumption{
		\global\let\theAssumption\oldtheAssumption}
}
\newcommand{\lra}{\to}
\newcommand{\ind}{\mathbf{1}}
\newcommand{\wti}{\widetilde}
\newcommand{\de}{\delta}
\newcommand{\Den}{\Delta_n}
\newcommand{\DenOneHalf}{\Delta_n^{\frac{1}{2}}}
\newcommand{\DenMinOneHalf}{\Delta_n^{-\frac{1}{2}}}
\newcommand{\DenSquared}{\sqrt{\Delta_n}}
\newcommand{\Deni}{\Delta^n_i}
\newcommand{\uDeni}{\un{\Delta}^n_i}
\newcommand{\LeinsKonv}{\stackrel{L^1}{\Longrightarrow}}
\newcommand{\stabKonv}{\stackrel{\mathrm{st}}{\Longrightarrow}}
\newcommand{\bv}{\big \vert}
\newcommand{\Sig}{\Sigma}
\numberwithin{equation}{section}
\begin{document}

	\begin{frontmatter}
		\title{Rate-optimal estimation of mixed semimartingales}
		\runtitle{Rate-optimal estimation of mixed semimartingales}
		
		\begin{aug}
			\author[A]{\fnms{Carsten H.} \snm{Chong}\ead[label=e1]{carstenchong@ust.hk}},
			\author[B]{\fnms{Thomas} \snm{Delerue}\ead[label=e2]{thomas.delerue@gmail.com}}
			\and
			\author[C]{\fnms{Fabian} \snm{Mies}\ead[label=e3]{f.mies@tudelft.nl}}
			
			\address[A]{Department of Information Systems, Business Statistics and Operations Management,
				The Hong Kong University of Science and Technology,
				\printead{e1}}
			
			\address[B]{Institute of Epidemiology, Helmholtz Munich,
				\printead{e2}}
			
				\address[C]{Delft University of Technology,
				\printead{e3}}
		\end{aug}

\begin{abstract}
Consider the sum $Y=B+B(H)$  of a Brownian motion $B$ and an independent fractional Brownian motion $B(H)$ with Hurst parameter $H\in(0,1)$. Even though $B(H)$ is not a semimartingale, it was shown in [\textit{Bernoulli} \textbf{7} (2001) 913--934]  that $Y$ is a semimartingale if $H>3/4$.  Moreover,  $Y$ is locally equivalent to $B$  in this case, so  $H$ cannot be consistently estimated from local observations of $Y$. This paper pivots on another unexpected feature in this model: if $B$ and $B(H)$ become correlated, then  $Y$ will never be a semimartingale, and $H$ can be identified, regardless of its value. This and other results will follow from a detailed statistical analysis of a  more general class of processes called \emph{mixed semimartingales}, which  are semiparametric extensions of $Y$ with stochastic  volatility in both the martingale and the fractional component. In particular, we derive consistent estimators and feasible central limit theorems  for all   parameters and processes that can be identified from high-frequency observations. We further  show that our estimators achieve optimal   rates   in a minimax sense. 
\end{abstract}

	 	\begin{keyword}[class=MSC]
	 	\kwd[Primary ]{60G22}
		\kwd{62G20}
	 	\kwd{62M09}
	 	\kwd[; secondary ]{60F05}
	 	\kwd{62P20}
	 \end{keyword}
	 
	 \begin{keyword}
		\kwd{Central limit theorem}
		\kwd{high-frequency observations}
		\kwd{Hurst parameter}
		\kwd{KL divergence}
		\kwd{minimax rate}
	 	\kwd{mixed fractional Brownian motion}
		\kwd{rough noise}
	 \end{keyword}
	 
 \end{frontmatter}

\section{Introduction}\label{intro}

Mixed fractional Brownian motions (mfBms) were introduced by \cite{Cheridito01} as the sum
\beq\label{eq:mfBm} Y_t=\si B_t +\rho B(H)_t,\qquad t\geq0,\eeq
of a standard Brownian motion $B$ and an independent fractional Brownian motion $B(H)$ with Hurst parameter $H\in(0,1)\setminus\{\frac12\}$. While this class of processes was originally introduced in mathematical finance to model long memory in asset prices, it poses nonstandard challenges from a statistical perspective: Even though the laws of $B$ and $B(H)$ on a finite time interval are mutually singular, the law of their superposition, $Y$, can be locally equivalent to that of  $\si B$ (if $H>\frac34$) or  $\rho B(H)$ (if $H<\frac14$); see \cite{Cai16,Cheridito01,vanZanten07}. In these cases, either $(\rho,H)$ or $\si$ cannot be consistently estimated on a finite time interval.

In this paper, we are interested in whether these results remain valid if $B$ and $B(H)$ are no longer assumed to be independent. The  answer to this question is  negative.
\begin{Theorem}\label{thm:intro}
	Suppose that $Y$ is given by \eqref{eq:mfBm} with  $\si,\rho>0$ and
	\beq\label{eq:fBm} B(H)_t= K_H^{-1}\int_{-\infty}^t ((t-s)^{H-\frac12}-(-s)_+^{H-\frac12})\,\dd \wt B_s,\qquad t\geq0,\eeq
	is the Mandelbrot--van Ness representation of standard fractional Brownian motion (see e.g., \cite[Chapter 1.3]{Mishura08}),
	where $(B,\wt B)$ is a two-dimensional Brownian motion with $\var(B_1)=\var(\wt B_1)=1$ and $\lambda=\cov(B_1,\wt B_1)\in[0,1]$ and $K_H$ is the normalizing constant given in \eqref{const:C:H}.
	Then the process $Y$ has the same distribution as $\widetilde{Y}$ given by
	\beq\label{eq:3proc} \widetilde{Y}_t = \si W_t +  \sqrt{\frac{2\lambda\rho\sigma}{K_H(H+\frac12)}} W({\ov H})_t+ \rho W(H)_t, \eeq
	where $W$ is a standard Brownian motion, $W(H)$ and $W(\ov H)$ are fractional Brownian motions with  Hurst parameters $H$ and $\ov H=\frac12(H+\frac12)$, respectively, and all three processes are independent. Moreover, if $\la>0$, $Y$ is not a semimartingale and its distribution is locally singular to  both $\si B$ and $\rho B(H)$ for all $H\in(0,1)\setminus\{\frac12\}$. 
\end{Theorem}

The newly emerging fBm with Hurst parameter $\overline{H}$ significantly changes the properties of the model. In particular, for all values of $H$, we have $|\overline{H}-\frac12| < \frac14$, which has two  consequences for the correlated case: first, the mixed process will never be locally equivalent to either of the two pure processes, and second,  $\overline{H}$ (and therefore $H$) is identifiable from high-frequency observations in all cases. Our asymptotic results below show that this also upholds for negative correlation coefficients $\lambda$.

Fractional processes have a long history of applications in fields such as hydrology \cite{hurst1951long,MLS97,pandey1998multifractal}, telecommunications \cite{LTWW94,MRR02}, finance \cite{Cheridito01,CR98,Gatheral18}, turbulence \cite{chen2022preaveraging,Corcuera13} among others.
In these applications, superpositions of fractional processes arise naturally when multiple sources have a cumulative effect. 
For example, \cite{williams2004error} describe a continuous GPS signal affected by both white noise and fractional noise. 
The same phenomenon is found by \cite{xu2017detecting} in a range of astronomical data sets.
Another example is found in hydrology, where fractional Brownian motion is commonly used to model river runoff, with varying Hurst parameters for different rivers \cite{pandey1998multifractal}.
In a system of multiple connected rivers, the runoffs add up downstream, which leads to a superposition of fBms with different Hurst parameters.
Due to the spatial correlation of rainfall, these constituent fBms will be correlated, analogous to the mixed fBm model studied in this paper. 
The results of this paper highlight that correlation between the fBms is not negligible, as it alters the statistical properties significantly. 

Our interest in mixed fractional Brownian motion of the specific form \eqref{eq:mfBm} is motivated by recent applications of mixed processes in financial econometrics. In \cite{Chong21}, it is shown that for a large set of high-frequency stock return data, observed prices are contaminated by microstructure noise that locally resembles fractional Brownian motion with Hurst parameter $H\in(0,\frac12)$. 
Like in most microstructure noise models in the literature, the two innovation processes driving price and noise (which are $B$ and $B(H)$ in \eqref{eq:mfBm}) are assumed to be independent of each other. 
However, both economic theory \cite{Diebold13} and empirical evidence  \cite{Hansen06} suggest that efficient price and microstructure noise should be contemporaneously cross-correlated.

Against this background, the main contribution of this paper is to develop an infill asymptotic theory for semiparametric  extensions of the mfBm model (called \emph{mixed semimartingales}) where   $\si$ and $\rho$ can be stochastic and time-varying. The fundamental  statistical question we address is the following: what parameters can be inferred from local observations of the sum of a martingale  and a correlated fractional process,  and what are the optimal rates of convergence as the sample size increases? 
To this end,  we first derive in Section~\ref{sec:CLT} a stable central limit theorem (CLT) for the empirical autocovariances of the increments of a mixed semimartingale process as the sampling frequency increases to infinity (Theorem~\ref{cor}).
In line with Theorem~\ref{thm:intro}, the population autocovariance consists of three terms with different scaling exponents. 
Most importantly, the leading order term only contains information about the parameter $\sigma$ (if $H>\frac{1}{2}$) or $(\rho,H)$ (if $H<\frac{1}{2})$.
To optimally estimate all parameters, it is thus necessary to utilize the information in the asymptotically smaller contributions to the autocovariance.
In particular, in Section~\ref{sec:est}, we combine the results of Theorem~\ref{cor} with an optimal generalized methods of moments (GMM) procedure to construct consistent and asymptotically mixed normal estimators for all identifiable model parameters in Theorem~\ref{thm:est}. Interestingly, the rates of convergence are non-standard and depend on the parameter value $H$. 
This phenomenon can be traced back to the utilization of autocovariance information of smaller asymptotic order than the leading term.
To make the GMM method feasible, we exhibit in Corollary~\ref{cor:var} consistent estimators of the asymptotic (co-)variances. In Section~\ref{sec:lb}, we return to the parametric setting of mfBm and show in Theorem~\ref{thm:lb} that the rates of our estimators in Theorem \ref{thm:est} are optimal in the minimax sense. Section~\ref{sec:sim} presents Monte Carlo evidence for the estimators from Theorem~\ref{thm:est}. 
Sections~\ref{sec:6} and \ref{app:lb} contain the proofs of our main results, except for Theorem~\ref{cor}, which is proved, in a  more general setting, in Appendices~\ref{app:CLT}--\ref{app:B}. Appendix~\ref{app:sim} contains additional simulation results.

Since our  principal aim is to analyze the impact of cross-correlation, we do not include jumps \cite{AitSahalia09,Jacod14},  irregular observation times \cite{BN05,Chen20,Jacod17} or rounding errors \cite{Delattre97,Li15, Rosenbaum09} in our analysis. 
To simplify the exposition, we further refrain from studying mixed fractional models with more than two components.
In what follows, $C$ denotes a constant in $(0,\infty)$, whose value 
may change from line to line. We also write $A\lec B$ if $A\leq CB$.

\section{Central limit theorem for sample autocovariances}\label{sec:CLT}

On a filtered   space $(\Om, \mathcal{F}, \bbf=(\mathcal{F}_t)_{t \geq 0}, \mathbb{P})$ satisfying the usual conditions, consider a   \emph{mixed semimartingale}
 \begin{equation} \label{mix:SM:mod-2}
 	Y_t = Y_0+\int_0^t a_s\,\dd s + \int_{0}^{t} \si_s \, \dd B_s + \int_{0}^{t} g(t-s) \rho_s \, \dd B_s + \int_{0}^{t} g(t-s) \rho'_s \, \dd B'_s,
 \end{equation}
 where $B$ and $B'$ are two independent one-dimensional standard $\F$-Brownian motions and $a$, $\si$, $\rho$ and $\rho'$ are one-dimensional predictable processes.
Moreover, we assume that  $g \colon \bbr \lra \bbr$ is a kernel of the form
\begin{equation} \label{kernel:g}
g(t) = K_H^{-1} t^{H - \frac{1}{2}}\bone_{\{t>0\}} + g_0(t),\qquad t\in\R,
\end{equation}
where  $H\in(0,1)\setminus\{\frac12\}$,
\begin{equation} \label{const:C:H}
K_H = \sqrt{\frac{1}{2H} + \int_{1}^{\infty}  \bigl( r^{H -\frac{1}{2}} - (r-1)^{H -\frac{1}{2}}  \bigr)^2 \, \dd r}=\frac{\Ga(H+\frac12)}{\sqrt{\sin(\pi H)\Ga(2H+1)}},
\end{equation}
and $g_0 \colon \bbr \lra \bbr$ is a continuously differentiable function with $g_0(x) = 0$ for all $x\leq 0$. By \eqref{kernel:g}, the kernel $g$ behaves as a power-law kernel around $0$, but due to the addition of $g_0$, the behavior of $g$ outside of $0$ is not further specified. In particular, because $g$ is not specified at infinity, the increments of $Y$ may have long or short memory, irrespective of the value of $H$.

Let $\Delta_n$ be a small time step such that $\Delta_n \lra 0$ and define  
\begin{equation} \label{not:tensor-0}
 \Delta_i^n  Y =  Y_{i \Delta_n}-  Y_{(i-1) \Delta_n} 
\end{equation}
(and similarly for other processes). Our goal is to prove a CLT as $\Den\to0$ for the (normalized) autocovariances of the increments of $Y$, given by
\begin{equation*} 
	V^{n}_{r, t}  =  
	\Den^{1-2H} \sum_{i= 1}^{[t/\Den] - r} 
	\Delta_i^n Y\Delta_{i+r}^n Y,\qquad
	\wh{V}^{n}_{r, t}  =
	\sum_{i = 1}^{[t/\Den] - r} 
	\Delta_i^n Y \Delta_{i+r}^n Y
\end{equation*}
for $r\in\N_0$. This will then be used in Section~\ref{sec:est} 
to
derive feasible estimators of $H$ and 
$$ C_T=\int_0^T c_s\,\dd s = \int_0^T \si_s^2\,\dd s,\quad \La_T=\int_0^T\la_s\,\dd s = \int_0^T\si_s\rho_s\,\dd s ,\quad \Pi_T=\int_0^T (\rho_s^2+\rho^{\prime 2}_s)\,\dd s,$$
whenever possible. We impose the following structural assumptions:

\settheoremtag{(CLT)}
\begin{Assumption} \label{Ass:B0} Consider the process $Y$ from \eqref{mix:SM:mod-2} and let
	\beq\label{eq:NH} N(H)= [1/\lvert 2H-1\rvert]. 
	\eeq
	\begin{enumerate}
		\item  The kernel $g$ is of the form \eqref{kernel:g} where $H\in (0,1)\setminus\{\frac12\}$ and $g_0\in C^1(\bbr)$ satisfies $g_0(x)=0$ for all $x\leq 0$. 
		\item  The drift process $a$ is   locally bounded, $\F$-adapted and càdlàg. Moreover, $B$ is a  standard $\F$-Brownian motion.
		\item If $H>\frac12$, the volatility process $\si$ takes the form
		\begin{equation} \label{repr:si-0}
			\si_t = \si^{(0)}_t + \int_{0}^{t} \wti{\si}_s \, \dd B_s+\int_{0}^{t} \wti{\si}'_s \, \dd B'_s+\int_{0}^{t} \wti{\si}''_s \, \dd B''_s, \qquad t \geq 0,
		\end{equation}
		where
		\begin{enumerate}
			\item $\si^{(0)}$ is an $\F$-adapted locally bounded  process such that for all $T>0$, there are  $\ga \in  ( \frac{1}{2}, 1  ]$ and $K_1\in(0,\infty)$ with
			\begin{equation} \label{mom:ass:si:rho:1-0}
				\bbe  [ 1\wedge  \vert \si_t^{(0)} - \si_s^{(0)}   \vert ]  \leq K_1 \vert t - s \vert^{\ga},\qquad s,t\in[0,T];
			\end{equation}
			\item  $\wti{\si}$, $\wti{\si}'$ and $\wti{\si}''$ are   $\F$-adapted locally bounded   processes such that for all $T>0$, there are  $\eps \in(0,1)$ and $K_2\in(0,\infty)$ with
			\begin{equation} \label{reg:ass:si:ti-0}
				\bbe  [ 1\wedge \vert \wti{\si}_t  - \wti{\si}_s  \vert    ]  \leq K_2 \vert t -s \vert^{\eps}, \qquad s,t\in[0,T],
			\end{equation}
		and an analogous bound for $\wti{\si}'$ and $\wti{\si}''$.
			\item $B''$ is a standard $\F$-Brownian motion that is independent of $(B,B')$.
		\end{enumerate}	
		If $H<\frac12$, we have \eqref{repr:si-0} but with $\si$, $\si^{(0)}$, $\wt \si$, $\wti{\si}'$ and $\wti{\si}''$ replaced by  $\rho$ and some processes $\rho^{(0)}$, $\wt\rho$, $\wt\rho'$ and $\wt\rho''$ satisfying   conditions analogous to \eqref{mom:ass:si:rho:1-0} and \eqref{reg:ass:si:ti-0}.
		\item  If $H>\frac12$, the process $\rho$ is   $\F$-adapted and locally bounded. Moreover, for all $T>0$, there is $K_3\in(0,\infty)$ such that
		\begin{equation} \label{reg:cond:rho:B3-0}
			\bbe  [   1\wedge \vert \rho_t - \rho_s   \vert  ]  \leq K_3 \vert t -s \vert^{\frac{1}{2}}, \qquad s,t\in[0,T].
		\end{equation}
		If $H<\frac12$, we have the same condition but with $\rho$ replaced by $\si$.
	\end{enumerate}
\end{Assumption}

These assumptions are fairly standard in high-frequency statistics (cf.\ \cite{AitSahalia14,Jacod12}). As usual, the most restrictive assumptions are \eqref{repr:si-0} and \eqref{reg:cond:rho:B3-0}, which essentially demand that the volatility processes $\si$ and $\rho$ be no rougher than a continuous It\^o semimartingale.   In particular, they do not cover the case of rough volatility (see \cite{Gatheral18}). However,  we conjecture that the CLTs do remain valid even in the presence of rough volatility. This is due to the special structure of quadratic functions, which has been exploited for instance in \cite{CHLRS24} in a slightly different context.

For the following theorem, which  is the main result of this section, we use the notation
\begin{equation} \label{num:Ga}
	\Ga^H_0 = 1 \quad \textrm{and} \quad \Ga^H_r = \frac{1}{2} \Bigl( (r + 1)^{2H} - 2 r^{2H} + (r-1)^{2H} \Bigr), \quad r \geq 1,
\end{equation}
and
\begin{equation} \label{nu:La}
	\Phi^H_0 =  \frac{2K_H^{-1}}{H+\frac12}, \quad
	\Phi^H_r =  \frac{K_H^{-1}}{H+\frac12} \Bigl( (r + 1)^{H+\frac12} - 2 r^{H+\frac12} + (r - 1)^{H+\frac12} \Bigr), \quad r \geq 1.
\end{equation}
Note that $(\Ga^H_r)_{r\geq0}$ is the autocovariance function of fractional Gaussian noise (i.e., of $(B(H)_{n+1}-B(H)_n)_{n\in\N}$) and $\Phi^H_r=\Phi^H_0 \Ga^{\ov H}_r$.
 We also use $\stackrel{\mathrm{st}}{\Longrightarrow}$ to denote functional stable convergence in law in the space of càdlàg functions  equipped with the local uniform topology.

\begin{Theorem}\label{cor}
	Suppose that   Assumption~\ref{Ass:B0} holds  and 	let $  V^n_t = (  V^{n}_{0, t},  \ldots,  V^{n}_{r-1, t})^T$ and $\wh V^n_t = (\wh V^{n}_{0, t},  \ldots,\wh V^{n}_{r-1, t})^T$ for some fixed    $r\in\N$. Further  define $\Ga^H = (\Ga_0^H,\ldots,\Ga_{r-1}^H)^T$, $\Phi^H=(\Phi_0^H,\ldots,\Phi_{r-1}^H)^T$ and $e_1 = (1,0,\ldots,0)^T \in \bbr^{r}$.
	\begin{enumerate}
		\item If $H>\frac12$, then
		\begin{equation} \label{CLT:var:func:lags}
			\Delta_n^{- \frac{1}{2}} \bigg\{ \wh V^{n}_{t} - e_1 C_t- \Den^{H-\frac12} \Phi^H\La_t- \Delta_n^{2H-1}\Ga^H\Pi_t   \bigg\}
			\stackrel{\mathrm{st}}{\Longrightarrow} \mathcal{Z}',
		\end{equation}
		where  
		$\mathcal{Z}' = (\mathcal{Z}'_t)_{t \geq 0}$ is an $\bbr^r$-valued process, defined on a very good filtered extension $(\ov{\Om}, \ov{\mathcal{F}}, (\ov{\mathcal{F}}_t)_{t \geq 0}, \ov{\mathbb{P}})$ of the original probability space $(\Om, \mathcal{F}, (\mathcal{F}_t)_{t \geq 0}, \mathbb{P})$ (see \cite[Chapter~2.1.4]{Jacod12}), that conditionally on  $\mathcal{F}$ is a centered Gaussian process with independent increments and  covariance process   $\calc'_t = (\calc^{\prime ij}_t)_{i,j=0}^{r-1}$   given by
		\begin{equation} \label{eq:cov}
			\calc^{\prime ij}_t=\ov{\bbe}[\calz^{\prime i}_t \calz^{\prime j}_t \mid \calf]  = 2^{\bone_{\{i=0\}}} \int_{0}^{t} \si_s^4 \, \dd s \,\bone_{\{i=j\}}.  
		\end{equation}	
		\item If $H<\frac12$, then
		\begin{align} \label{CLT:var:func:lags-2}
			\begin{split}
				\quad \Delta_n^{- \frac{1}{2}} &\bigg\{V^{n}_{t} - \Ga^H \Pi_t -\Den^{\frac12-H}\Phi^H\La_t- e_1\Delta_n^{1-2H} C_t \bigg\} \\
				=
				\Delta_n^{\frac{1}{2}-2H}& \bigg\{\wh V^{n}_{t} - e_1 C_t- \Den^{H-\frac12} \Phi^H\La_t- \Delta_n^{2H-1}\Ga^H\Pi_t \bigg\}
				\stackrel{\mathrm{st}}{\Longrightarrow} \mathcal{Z},
			\end{split}
		\end{align}
		where  
		$\mathcal{Z} = (\mathcal{Z}_t)_{t \geq 0}$ is an $\bbr^r$-valued process  defined on $(\ov{\Om}, \ov{\mathcal{F}}, (\ov{\mathcal{F}}_t)_{t \geq 0}, \ov{\mathbb{P}})$ that conditionally on  $\mathcal{F}$ is a centered Gaussian process with independent increments and  covariance process   $\calc_t = (\calc^{ij}_t)_{i,j=0}^{r-1}$   given by
		\begin{equation} \label{eq:cov-2}
			\calc^{ij}_t = \ov{\bbe}[\calz^{\prime i}_t \calz^{\prime j}_t \mid \calf]=\calc^{ij} \int_{0}^{t} (\rho^2_s+\rho^{\prime 2}_s)^2 \, \dd s,\qquad \calc^{ij} = v^{H,0}_{ij}+\sum_{k=1}^\infty (v^{H,k}_{ij}+v^{H,k}_{ij}),
		\end{equation}	
		with
		\beq \label{covMat:lags}v^{H,k}_{ij}=	\Ga^H_k\Ga^H_{\lvert i-j+k\rvert} + \Ga^H_{\lvert k-j\rvert}\Ga^H_{k+i}.\eeq
	\end{enumerate}
\end{Theorem}

Theorem~\ref{cor} is a special case of  Theorems~\ref{thm:CLT:mixedSM} and \ref{thm:CLT2}, which are stated and proved in the Appendix.
Note that if $H>\frac34$, the term $\Den^{2H-1}\Ga^H \Pi_t$ in \eqref{CLT:var:func:lags} is dominated by the Gaussian fluctuation process $\Den^{1/2}\calz'$ and can thus be omitted in this case. The same comment applies to \eqref{CLT:var:func:lags-2}, where the term $e_1 C_t$ can be dropped if $H<\frac14$.

\section{Semiparametric estimation of mixed semimartingales}\label{sec:est}

In order to  construct rate-optimal estimators of $\Theta_T=(H,C_T,\La_T,\Pi_T)$, we combine Theorem~\ref{cor} with a GMM approach. The idea is to choose $r$ lags and, at stage $n$, a symmetric positive weight matrix $\calw_n \in \R^{r\times r}$ (which can be a random statistic) and  to obtain an estimator of $\Theta_T$ by solving the minimization problem
\begin{equation}\label{eq:optprob} \begin{split}
 \argmin_{\theta=(H,C,\La,\Pi)} \Bigl\lVert \mathcal{W}_n^{1/2} \Bigl(\wh V^{n}_{T} - e_1 C- \Den^{H-\frac12} \Phi^H\La- \Delta_n^{2H-1}\Ga^H\Pi\Bigr) \Bigr\rVert_2^2,\end{split}
\end{equation}
where 
$\lVert \cdot\rVert_2$ is the Euclidean norm. 
More precisely, we construct an estimator $	\wh\Theta^n_T=(\wh H^n,\wh C^n_T,\wh \La^n_T,\wh\Pi^n_T)$ of $\Theta_T$ by solving the \emph{estimating equation}
\begin{equation}\label{eq:esteq} 
	F_n(\theta)=0,
\end{equation}
where 
$
	F_n(\theta) = \nabla_\theta \lVert \mathcal{W}_n^{ 1/2}(\widehat{V}^n_T-\mu_n(\theta))\rVert^2_2 = -2D_\theta \mu_n(\theta)^T \mathcal{W}_n  ( \widehat{V}^n_T - \mu_n(\theta)  )   \in\R^4
$
and
$\mu_n(\theta)=\mu_n(H,C,\La,\Pi)=e_1 C + \Delta_n^{H-{1}/{2}} \Phi^H\Lambda +\Delta_n^{2H-1} \Gamma^H\Pi \in \R^r$.
Using the 
theory of estimating equations (see \cite{JS18,MP22}), we can derive the asymptotic properties of this estimator.

\begin{Theorem}\label{thm:est}
	Suppose that conditions 2--5 in Assumption~\ref{Ass:B} hold for the processes specified in \eqref{eq:spec}. Further assume that  $r\geq5$ and that $\calw$ and $\calw_n$ are (possibly random) symmetric  positive definite matrices in $\R^{r\times r}$ such that $\calw_n\limp \calw$. 
	If $H<\frac12$, suppose that $\Pi_T>0$ almost surely; if $H>\frac12$, suppose that $\La_T\neq0$ almost surely.
	\begin{enumerate}
		\item If $H\in(\frac14,\frac12)$, there exists a sequence $	\wh\Theta^n_T=(\wh H^n,\wh C^n_T,\wh \La^n_T,\wh\Pi^n_T)$ of estimators of $\Theta_T=(H,C_T,\La_T,\Pi_T)$ such that $\mathbb{P}(F_n(\wh\Theta^n_T)=0)\limp1$ and  
		\begin{equation}\label{eq:CLT-est-1} 
			D_n^{-1} (\wh\Theta^n_T-\Theta_T) 	\stackrel{\mathrm{st}}{\longrightarrow}  E^{-1} (\partial_H \Gamma^H \Pi_T,e_1, \Phi^H,\Ga^H)^T \mathcal{W}\calz_T,
		\end{equation}
	where
	\begin{equation}\label{eq:ABCW-1} 
		\begin{split}
			D_n &= \begin{pmatrix} \Delta_n^{\frac{1}{2}}&0&0&0 \\ 0&\Delta_n^{2H-\frac{1}{2}} &0&0 \\ 0&0&\Delta_n^{H}&0 \\ 2\Delta_n^{\frac{1}{2}}\log\Delta_n^{-1}\Pi_T&0&0& \Delta_n^{\frac{1}{2}} \end{pmatrix},\\
			E&= (\partial_H\Gamma^H \Pi_T,e_1 , \Phi^H , \Gamma^H )^T \mathcal{W} (\partial_H\Gamma^H \Pi_T ,e_1 , \Phi^H, \Gamma^H) \in\R^{4\times 4},
		\end{split}
	\end{equation}
	and $\calz$ is the same process as in Theorem~\ref{cor}.  The matrix $E$ in the last display is regular.  Moreover, the sequence $\wh\Theta^n_T$ is \emph{locally unique} in the sense that any other sequence $\wt\Theta^n_T$ of estimators such that $\mathbb{P}(F_n(\wt\Theta^n_T)=0)\limp1$ and $\mathbb{P}(\lVert \wt\Theta^n_T-\Theta_T\rVert_2\leq 1/(\log\Den^{-1})^2)\limp1$   satisfies $\mathbb{P}(\wt\Theta^n_T=\wh \Theta^n_T)\limp1$.
	\item If $H\in(0,\frac14)$, define $F^{(1)}_n(\theta^{(1)})= \nabla_\theta \lVert \mathcal{W}_n^{ 1/2}(\widehat{V}^n_T-\mu^{(1)}_n(\theta^{(1)}))\rVert^2_2$, where $\mu^{(1)}_n(\theta^{(1)})=\mu_n^{(1)}(H,\La,\Pi)= \Delta_n^{H-{1}/{2}} \Phi^H\Lambda +\Delta_n^{2H-1} \Gamma^H\Pi$. Then there is  a locally unique sequence $	\wh\Theta^{n, (1)}_T=(\wh H^{n,(1)},\wh \La^{n,(1)}_T,\wh\Pi^{n,(1)}_T)$ of estimators of $\Theta^{(1)}_T=(H,\La_T,\Pi_T)$  with the property   $\mathbb{P}(F^{(1)}_n(\wh\Theta^{n,(1)}_T)=0)\limp1$   such that \eqref{eq:CLT-est-1} remains valid for $\wh\Theta^{n, (1)}_T- \Theta^{(1)}_T$   with the second row and column (out of four) deleted from all vectors and matrices appearing in \eqref{eq:CLT-est-1} and \eqref{eq:ABCW-1}. 
	\item If $H\in(\frac12,\frac34)$,  Part 1 of the theorem remains true if \eqref{eq:CLT-est-1} and \eqref{eq:ABCW-1} are replaced by
	\begin{equation}\label{eq:CLT-est-2} 
			D_n^{-1} (\wh\Theta^n_T-\Theta_T) 	\stackrel{\mathrm{st}}{\longrightarrow} E^{-1}   (\partial_H \Phi^H \Lambda_T,e_1,\Phi^H, \Ga^H)^T  \mathcal{W}\calz'_T,
	\end{equation}
and
	\begin{equation}\label{eq:ABCW-2}		\begin{split}
			D_n &= \begin{pmatrix} \Delta_n^{1-H}&0&0&0 \\ 0&\Delta_n^{\frac{1}{2}} &0&0 \\ \Delta_n^{1-H} \log(\Delta_n^{-1}) \Lambda_T &0&\Delta_n^{1-H}&0 \\ 0&0&0& \Delta_n^{\frac{3}{2}-2H} \end{pmatrix},\\
			E&=  (\partial_H\Phi^H  \Lambda_T,e_1 ,  \Phi^H ,\Gamma^H )^T \mathcal{W} (\partial_H\Phi^H \Lambda_T, e_1,  \Phi^H , \Gamma^H ) \in\R^{4\times 4},
		\end{split}
\end{equation}
respectively, 	and $\calz'$ is the same process as in Theorem~\ref{cor}.
\item If $H\in(\frac34,1)$,  define $F^{(2)}_n(\theta^{(2)})= \nabla_\theta \lVert \mathcal{W}_n^{1/2}(\widehat{V}^n_T-\mu^{(2)}_n(\theta^{(2)}))\rVert^2_2$, where $\mu^{(2)}_n(\theta^{(2)})=\mu_n^{(2)}(H,C,\La)= e_1C+\Delta_n^{H-{1}/{2}} \Phi^H\Lambda$. Then there is  a locally unique sequence $	\wh\Theta^{n, (2)}_T=(\wh H^{n,(2)},\wh C^{n,(2)}_T,\wh\La^{n,(2)}_T)$ of estimators of $\Theta^{(2)}_T=(H,C_T,\La_T)$ satisfying $\mathbb{P}(F^{(2)}_n(\wh\Theta^{n,(2)}_T)=0)\limp1$   such that \eqref{eq:CLT-est-2}  remains valid for $\wh\Theta^{n, (2)}_T- \Theta^{(2)}_T$ with the last row and column (out of four) deleted from all vectors and matrices appearing in  \eqref{eq:CLT-est-2} and \eqref{eq:ABCW-2}. 
\item In the setting of Part 2 (resp., Part 4), if a sequence of estimators $\wh\Theta^n_T=(\wh H^n,\wh C^n_T,\wh \La^n_T,\wh\Pi^n_T)$ satisfies $\mathbb{P}(F_n(\wh\Theta^n_T)=0)\to1$, then the weak convergence statements in Part 2 (resp., Part 4) remain valid with $\ov \Theta^n_T=(\wh H^n,\wh\La^n_T,\wh\Pi^n_T)$ instead of $\wh \Theta^{n,(1)}_T$ (resp., $\ov \Theta^n_T=(\wh H^n,\wh C^n_T,\wh\La^n_T)$ instead of $\wh \Theta^{n,(2)}_T$).
\end{enumerate}
\end{Theorem}

The proof will be given in Section~\ref{sec:6}. Note that a nondiagonal rate matrix also occurs in similar situations where a self-similarity parameter is estimated; see \cite{brouste2018local,chigansky2022a,MP22}, for example. 

In the case $H\in (\frac14, \frac34)$, all parameters of the model are identifiable, and Parts 1 and 3 of Theorem \ref{thm:est} describe how the exact value of $H$ affects the asymptotic behavior of the estimators. 
In the case $H\notin (\frac14, \frac34)$, the model is only partially identifiable:
if $H<\frac14$ (Part 2), we cannot consistently estimate $C_T$, while if $H>\frac34$ (Part 4), we cannot consistently estimate $\Pi_T$. 
Parts 2 and 4 of Theorem \ref{thm:est} state that in these partially identifiable cases, one may obtain asymptotically normal estimators by reducing the GMM equations to only include identifiable parameters.
However, these estimators are infeasible if the regime of $H$ is not known.
Fortunately, by Part 5, the feasible GMM estimator \eqref{eq:optprob}
can still be employed in the regime $H\notin(\frac14, \frac34)$ to derive asymptotically normal estimators for all identifiable parameters.

\begin{Remark}\label{rem:ident}
 The assumption $\La_T\neq0$ is important in Theorem~\ref{thm:est} if $H>\frac12$ to ensure identifiability of all parameters. For example, in the case of an mfBm as in \eqref{eq:mfBm}, if $\la=0$, then by \cite{vanZanten07} there is no way to consistently estimate $H$ and $\rho$ if $H>\frac34$. Moreover, by Theorem~\ref{thm:intro}, it will not be possible to asymptotically distinguish the model $(H,\si,\lambda,\rho)=(H_0,\si_0,\lambda_0,\rho_0)\in (\frac34,1)\times(0,\infty)\times(0,1]\times(0,\infty)$ from the model
		$(H,\si,\lambda,\rho)=(H_1,\si_1,0,\rho_1)$ if $H_1=\frac12(\frac12+H_0)$, $\si_1=\si_0$ and $\rho_1=(2\la_0\rho_0\si_0/(K_{H_0}(H_0+\frac12)))^{1/2}$. To see this, note that the process $Y$ in the model $(H_0,\si_0,\la_0,\rho_0)$ has the same law as $\si_0 W+\rho_1W(H_1)+\rho_0W(H_0)$ by Theorem~\ref{thm:intro}. Moreover, by \cite{vanZanten07}, the laws of $\si_0W+\rho_0W(H_0)$  and $\si_0W$ are locally equivalent, so we deduce that the laws of  $\si_0 W+\rho_0W(H_0)+\rho_1W(H_1)$  and $\si_0 W+\rho_1W(H_1)$ are equivalent by noting that convolution of measures preserves equivalence. However, since $\si$ remains the same in both models, this local equivalence has no consequence for  estimating $\si$, which  often (e.g., in econometrics)   is the main parameter of   interest. We also note that if $\frac12<H<\frac34$ and $\La_T=0$, then the rate of convergence for estimating $H$ will be slower (equal to $\Delta_n^{3/2-2H}$, see \cite{Dozzi15}), as one can no longer rely on the fictitious fBm for inferring $H$.
\end{Remark}

In order to make the CLTs of Theorem~\ref{thm:est} feasible, we adapt the results of \cite{LX16} to construct consistent estimators of the involved asymptotic covariance matrices. This further allows us to  choose an optimal weight matrix $\calw_n$.  We choose two integer sequences $k_n$ and $\ell_n$ and define
\begin{equation}\label{eq:acov} \begin{split}
		\wh \Sig_n &= \wh\Sig^{(0)}_n + \sum_{\ell=1}^{\ell_n} w(\ell,\ell_n)(\wh \Sig^{(\ell)}_n + (\wh\Sig^{(\ell)}_n)^T),  \\
		\wh\Sig^{(\ell)}_n &= \Den\sum_{i=\ell+1}^{[T/\Den]-r+1} \psi^{(i)}(\psi^{(i-\ell)})^T \in\R^{ r \times r},\qquad \psi^{(i)}=(\psi^{(i)}_0,\dots,\psi^{(i)}_{r-1})^T,\\
		\psi^{(i)}_j &= \Delta^n_i Y \Delta^n_{i+j}Y - \wh m^{n,j}_i,\qquad \wh m^{n,j}_i=\frac1{k_n}\sum_{k=0}^{k_n-1} \Delta^n_{i+k} Y\Delta^n_{i+k+j}Y,\\
		\wh\eta_n& = (\Den^{2\wh H^n}\wh\Pi^n_T (\partial_H \Ga^{\wh H^n}-2(\log \Den^{-1}) \Ga^{\wh H^n}) + \Den^{\wh H^n+1/2}\wh \La^n_T (\partial_H \Phi^{\wh H^n} - (\log \Den^{-1})\Phi^{\wh H^n}),\\
		&\qquad\qquad\Den e_1,\Den^{\wh H^n+1/2}\Phi^{\wh H^n},\Den^{2\wh H^n}\Ga^{\wh H^n}) \in \R^{r\times 4}
	\end{split}
\end{equation}
for some deterministic weight function $w$.

\begin{Corollary}\label{cor:var} Assume the conditions of (any part of) Theorem~\ref{thm:est} and suppose that $k_n,\ell_n\to\infty$ with $\ell_n/\sqrt{k_n}\to0$ and $\ell_n\sqrt{k_n\Den}\to0$. Further assume that $w$ is uniformly bounded and satisfies $w(\ell,\ell_n)\to1$ for every $\ell\geq1$.
 If we denote the diagonal entries of 
	\begin{equation}\label{eq:avar} 
	\mathcal{V}_n=\Den	(\wh \eta_n^T  {  \mathcal{W}}_n \wh \eta_n)^{-1}\wh\eta_n^T  { \mathcal{W}}_n\wh\Sig_n  { \mathcal{W}}_n \wh\eta_n(\wh \eta_n^T  {  \mathcal{W}}_n \wh \eta_n)^{-1} \in \R^{4\times 4}
\end{equation}
by $\mathcal{V}_n^H$, $\mathcal{V}_n^C$, $\calv_n^\La$ and  $\mathcal{V}_n^\Pi$, then  asymptotic $\ga$-confidence intervals for $H$, $C_T$ (if $H>\frac14$), $\La_T$ and $\Pi_T$ (if $H<\frac34$) for  $\ga\in(0,1)$ are given by
\begin{align*} 
	&[\wh H^n \pm \Phi^{-1}((1-\ga)/2)\sqrt{\mathbb{V}_n^H}],& [\wh C^n_T \pm \Phi^{-1}((1-\ga)/2)\sqrt{\mathbb{V}_n^C}],\\
	& [\wh \La^n_T \pm \Phi^{-1}((1-\ga)/2)\sqrt{\mathbb{V}_n^\La}], &[\wh \Pi^n_T \pm \Phi^{-1}((1-\ga)/2)\sqrt{\mathbb{V}_n^\Pi}],
\end{align*}
respectively, where $\Phi$ is the cumulative distribution function of the standard normal law and $	\wh\Theta^n_T=(\wh H^n,\wh C^n_T,\wh \La^n_T,\wh\Pi^n_T)$ is the solution to \eqref{eq:esteq}.
\end{Corollary}

\begin{Remark}[Optimal GMM]\label{rem:opt}
	If we choose $\calw_n=\wh\Sigma_n^{-1}$ and we have $H\in(\frac14,\frac12)$ (resp., $H\in(\frac12,\frac34)$), then the right-hand side of \eqref{eq:CLT-est-1} (resp., \eqref{eq:CLT-est-2}) has a centered Gaussian distribution with mean $0$ and covariance matrix $E^{-1}$ with $\calw=\calc_T^{-1}$ (resp., $\calw=(\calc'_T)^{-1}$). Analogous  statements hold if $H\in(0,\frac14)$ and $H\in(\frac34,1)$.
\end{Remark}

\section{Statistical lower bounds}\label{sec:lb}

To derive a statistical lower bound, we consider the parametric setup of an mfBm
\begin{equation}
	Y_t = \int_0^t \sigma\, \dd B_s + \rho \int_{-\infty}^t h_H(t,s) \, \dd B_s + \rho'\int_{-\infty}^t h_H(t,s)\, \dd B'_s, \label{eqn:cormixed-fbm}\end{equation}
where $h_H(t,s) =K_H^{-1} [ (t-s)_+^{H-{1}/{2}}  - (-s)_+^{H-{1}/{2}}  ]$, $\sigma>0$, $\rho, \rho'\in\R$, and $B$ and $B'$ are two independent standard Brownian motions. 
Note that this model is a special case of \eqref{mix:SM:mod} but with $a_s=(H-\frac12)K_H^{-1}\int_{-\infty}^0 (s-r)^{H-3/2} (\rho\,\dd B_r+\rho'\,\dd B'_r)$, which is unbounded  as $s\downarrow 0$ if $H<\frac12$. Nevertheless, one can show that any small neighborhood around $0$ only has a negligible impact on the asymptotics in $V^n_t$, so Theorem~\ref{cor} remains valid.

\begin{table}[t]
	\caption{Rates of convergence of the estimators presented in Section \ref{sec:est}.}
	\label{tab:rates}
	{\def\arraystretch{2}\tabcolsep=10pt
	\begin{tabular}{c|c|c}
Parameter		& $H\in(0,\frac{1}{2})$ & $H\in(\frac{1}{2},1)$   \\   \hline 
		$H$  & $\Den^{\frac{1}{2}}$ & $\Den^{1-H}$  \\ 
		$ \sigma^2$   & $\Den^{2H-\frac{1}{2}}$ (if $H>\frac14$) & $\Den^{\frac{1}{2}}$   \\ 
		$\Lambda$ & $\Den^{H}$  & $\Den^{1-H}\lvert \log\Den\rvert$ \\ 
	$\Pi$   & $\Den^{\frac{1}{2}} \lvert \log \Den\rvert$ & $\Den^{\frac{3}{2}-2H}$ (if $H<\frac34$)    
	\end{tabular}
}
\end{table}

The methods presented in Section \ref{sec:est} therefore yield estimators of the parameters $H$, $\sigma^2$, $ \Pi = \rho^2 + \rho'^{2}$ and $\Lambda = \rho\sigma$,
with rates given in Table \ref{tab:rates}.
We show that these rates are optimal, by establishing matching minimax lower bounds.
To this end, consider model \eqref{eqn:cormixed-fbm} with parameter vector $\theta = (H, \sigma^2,\Lambda,\Pi)\subset \Theta$ for the open parameter set 
\begin{align*}
	\Theta = \{ (H,\sigma^2,\Lambda, \Pi) \in\R^4 : H\in(0,1)\setminus\{\tfrac12\}, \, \sigma^2 >0,\, \Lambda\neq 0,\, \Pi >0,\, \La^2< \si^2\Pi \}.
\end{align*}
We use the notation $H(\theta)=\theta_1$, $\si^2(\theta)=\theta_2$ etc.
For $\theta_0\in\Theta$, define the local parameter set 
\begin{align*}
	\mathcal{D}_n(\theta_0) = \{ \theta \in\Theta: \|\theta-\theta_0\|\leq 1/\lvert \log \Den\rvert\}
\end{align*}
and the rate vector 
\begin{align*}
	R_n(\theta_0) =  \begin{cases}
		 ( \Den^{\frac{1}{2}}, \Den^{2H-\frac{1}{2}}, \Den^{H}, \Den^{\frac{1}{2}}/\lvert \log\Den\rvert  ) &\text{if } H(\theta_0)<\frac{1}{2}, \\
		 (\Den^{1-H}, \Den^{\frac{1}{2}}, \Den^{1-H}/\lvert \log\Den\rvert, \Den^{\frac{3}{2}-2H}   ) & \text{if } H(\theta_0)>\frac{1}{2}.
	\end{cases} 
\end{align*}
For the regime $H\in(0,\frac{1}{4})$, the parameter $\sigma^2$ is not identifiable, as evidenced by the deteriorating rates of convergence. 
The same holds for $\Pi$ in the regime $H\in(\frac{3}{4},1)$. 

\begin{Theorem}\label{thm:lb}
	Let $\theta_0\in\Theta$ be such that $\Lambda(\theta_0)\neq 0$.
	If $H(\theta_0)\in(\frac{1}{4}, \frac{3}{4})\setminus\{\frac{1}{2}\}$, there exists some $c>0$ such that
	\begin{equation}
		\limsup_{n\to\infty} \inf_{\widehat{\theta}_n} \sup_{\theta\in\mathcal{D}_n(\theta_0)} \bbp_\theta\Bigl( | (\widehat{\theta}_n-\theta)_k |\geq c\, R_n(\theta_0)_k \Bigr) >0, \quad k=1,2,3,4, \label{eqn:lb-identified}
	\end{equation}
where the infimum is taken among all measurable functions $\wh{\theta}_n$ of $\{Y_{ {i}{\Den}}: i=1,\ldots, [1/\Den]\}$.
	If $H\in(0,\frac{1}{4}]$ (resp., $H\in[\frac{3}{4},1)$), then \eqref{eqn:lb-identified} remains true for $k=1,3,4$ (resp., $k=1,2,3$).
\end{Theorem}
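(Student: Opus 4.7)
The plan is a two-point (Le Cam) argument, applied separately for each identifiable coordinate $k\in\{1,2,3,4\}$. For each $k$, I would construct a perturbation $h_n\in\R^4$ with $|h_{n,k}|\asymp R_n(\theta_0)_k$ and the other three components chosen so as to minimize the Kullback--Leibler divergence subject to this constraint, and then set $\theta_n^{(0)}=\theta_0$, $\theta_n^{(1)}=\theta_0+h_n$. Since $\mathcal{D}_n(\theta_0)$ has radius $1/\lvert\log\Den\rvert$ while all entries of $R_n(\theta_0)$ are polynomial in $\Den$, both points lie in $\mathcal{D}_n(\theta_0)$ for all large $n$. By the standard chain of inequalities
\[
\inf_{\wh\theta_n}\max_{j=0,1}\bbp_{\theta_n^{(j)}}\!\Bigl(\bigl|(\wh\theta_n-\theta_n^{(j)})_k\bigr|\geq \tfrac{|h_{n,k}|}{2}\Bigr) \geq \tfrac{1}{2}\Bigl(1 - \sqrt{\tfrac12\,\mathrm{KL}\bigl(\bbp_{\theta_n^{(0)}}^{(n)} \,\big\|\, \bbp_{\theta_n^{(1)}}^{(n)}\bigr)}\,\Bigr),
\]
obtained from Pinsker's inequality applied to the test ``$\wh\theta_n$ is closer to $\theta_n^{(1)}$ in coordinate $k$'', the problem reduces to showing the Gaussian KL divergence between the two observation laws stays bounded in $n$.

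Gaussianity is the key leverage: under each $\bbp_\theta$, the vector $(Y_{i\Den})_{i=1}^{[1/\Den]}$ is centered Gaussian, and after passing to the stationary increments $X_i^n=\Delta_i^n Y$, its covariance matrix $\Sigma_n(\theta)$ is Toeplitz with spectral density admitting the additive decomposition
\[
f_\theta^n(\omega) = \si^2\Den\,f_{\mathrm{BM}}(\omega) + \Pi\,\Den^{2H}\,f_H^{\mathrm{fBM}}(\omega) + 2\La\,\Den^{H+\frac12}\,f_H^{\mathrm{cross}}(\omega),
\]
where $f_{\mathrm{BM}}$, $f_H^{\mathrm{fBM}}$, $f_H^{\mathrm{cross}}$ are explicit functions on $[-\pi,\pi]$ whose mild $H$-dependent singularities at $\omega=0$ are integrable for $H\in(0,1)\setminus\{\tfrac12\}$; the cross-term is obtained directly from the explicit kernel $h_H$. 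Combining the exact Gaussian identity
\[
\mathrm{KL}\bigl(\caln(0,\Sigma_0)\,\|\,\caln(0,\Sigma_1)\bigr) = \tfrac12 \bigl[\mathrm{tr}\bigl(\Sigma_1^{-1}\Sigma_0-I\bigr) - \log\det\bigl(\Sigma_1^{-1}\Sigma_0\bigr)\bigr]
\]
with the Taylor expansion $x-1-\log x = \tfrac12(x-1)^2 + O((x-1)^3)$ and Szeg\H{o}/Avram--Parter-type estimates for Toeplitz forms with mildly singular symbols yields the perturbative bound
\[
\mathrm{KL}\bigl(\bbp_{\theta_n^{(0)}}^{(n)}\,\big\|\,\bbp_{\theta_n^{(1)}}^{(n)}\bigr) \;\lesssim\; \frac{n}{4\pi}\int_{-\pi}^{\pi}\Bigl(\frac{f_{\theta_n^{(1)}}^n(\omega) - f_{\theta_0}^n(\omega)}{f_{\theta_0}^n(\omega)}\Bigr)^{2}\,\dd\omega,
\]
valid because $\theta_0$ is interior to $\Theta$ so small perturbations preserve positive-definiteness of $\Sigma_n$.

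It then remains to plug the additive decomposition of $f_\theta^n$ into this integral and minimize, over perturbations with $h_{n,k}$ fixed at the target scale $R_n(\theta_0)_k$, the resulting quadratic form in $h_n$. A coordinate-by-coordinate computation of the $\Den$-scaling reproduces Table~\ref{tab:rates}: the three contributions scaling as $\si^2\Den$, $\La\Den^{H+\frac12}$, $\Pi\Den^{2H}$ dominate $f_{\theta_0}^n$ in essentially disjoint frequency regimes whose sizes determine the effective Fisher information of each coordinate when the other three are treated as nuisance. The main obstacle is the sharp tracking of the $\lvert\log\Den\rvert$ factors appearing in the rates for $\La$ with $H>\tfrac12$ and for $\Pi$ with $H<\tfrac12$: in these regimes the perturbation in the $k$-direction is nearly a constant multiple of the perturbation in one of the other directions across the bulk of $[-\pi,\pi]$ (in particular, $\La$ becomes nearly collinear with $H$), and the two directions are distinguishable only in a low-frequency band of polynomial width in $\Den$, on which the spectral integrand has an integrable logarithmic singularity; the cutoff of this logarithm at the scale $\omega\asymp\Den$ produces precisely the $\lvert\log\Den\rvert$ inflation of the rate, so losing it to coarse bounds would undermine optimality. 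The unidentifiable regimes $H\in(0,\tfrac14]$ and $H\in[\tfrac34,1)$ require no additional argument, because the correspondingly unidentifiable coordinate is excluded from the statement, and for the remaining coordinates the spectral computation above goes through unchanged with only a continuous shift in the exponent of $\Den$.
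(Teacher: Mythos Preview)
Your overall scheme (two-point Le Cam reduction plus a KL bound between the two Gaussian observation laws) is exactly the scaffold the paper uses. But your execution diverges from the paper in two places, and in one of them there is a real gap.

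\textbf{Perturbation design and the $\lvert\log\Den\rvert$ factors.} The paper does \emph{not} perturb additively. Writing $\wt\Sigma_n(\theta)=\sigma^2 n^{-1}I_n+\Pi n^{-2H}\Sigma_n(H)+\Lambda b(H)n^{-2\overline H}\Sigma_n(\overline H)$, it sets
\[
\Pi(\theta_n)=\Pi\, n^{2r_{1,n}}(1+r_{4,n}),\qquad \Lambda(\theta_n)=\Lambda\,\tfrac{b(H)}{b(H+r_{1,n})}\,n^{r_{1,n}}(1+r_{3,n}),
\]
so that the factor $n^{-2(H+r_{1,n})}$ arising from the shift $H\to H+r_{1,n}$ is exactly cancelled and $\wt\Sigma_n(\theta_n)$ keeps the same $n$-powers as $\wt\Sigma_n(\theta_0)$. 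The $\lvert\log\Den\rvert$ in the rates for $\Pi$ (when $H<\tfrac12$) and for $\Lambda$ (when $H>\tfrac12$) then falls out of pure algebra: since $r_{1,n}\to 0$, one has $n^{2r_{1,n}}-1\sim 2r_{1,n}\log n$, so $|\Pi(\theta_n)-\Pi|\asymp r_{1,n}\log n + r_{4,n}$. Your account locates the log in ``an integrable logarithmic singularity'' of the spectral integrand near $\omega=0$, cut off at $\omega\asymp\Den$; this is the wrong mechanism. The logarithm is not produced by integrating $(\log|\omega|)^2$ over a shrinking band (that integral is $O(1)$), but by the fact that the $H$-direction and the $\Pi$- (resp.\ $\Lambda$-)direction are collinear \emph{up to an overall factor $\log n$} once you account for the $n^{-2H}$ scaling. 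If you try to recover this from an additive perturbation plus ``minimize over the nuisance coordinates,'' the minimizer will itself be of order $r_{1,n}\log n$ in the compensating coordinate, so you would arrive at the same answer, but the computation is much less transparent and your explanation of it is incorrect as written.

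\textbf{The KL bound.} The paper does not use a Whittle/Szeg\H{o} approximation. It Taylor-expands to get $\mathrm{KL}(\theta_n\|\theta_0)\leq \tfrac14\sup_{h}\tr(A(h)^{-1}\delta_n A(h)^{-1}\delta_n)$, then controls this trace by (i) a Loewner lower bound $A(h)\geq c[n^{-1}I_n+n^{-2H}\Sigma_n(H-\delta)]$, (ii) a spectral upper bound on $\delta_n$ as a Toeplitz matrix with explicit singular symbol, and (iii) a trace estimate of the form $\tr(\Sigma_n(H)^{-1}T_n(g)\Sigma_n(H)^{-1}T_n(g))=O(n\vee n^{2(\beta-2H+1)+\delta})$ for $g(\lambda)=O(|\lambda|^{-\beta})$, taken from Lieberman's work on Toeplitz matrices with long-memory symbols. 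Your claimed bound $\mathrm{KL}\lesssim \tfrac{n}{4\pi}\int(\Delta f/f)^2$ is the right heuristic, but getting it rigorously here is nontrivial: $\Sigma_n(\theta_0)^{-1}\Sigma_n(\theta_n)$ is not Toeplitz, the symbol is genuinely singular for $H>\tfrac12$, and standard Avram--Parter statements assume bounded symbols. The paper's trace-based route sidesteps exactly this issue; your proposal, as it stands, asserts the bound rather than proving it.

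In short: same skeleton, but the paper's non-additive reparametrization is the clean source of the log factors, and its trace estimates replace the Whittle heuristic you invoke without justification.
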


While our current methods do not allow us to determine the sharp value of the constant $c$ in \eqref{eqn:lb-identified}, we conjecture that the GMM estimators from the previous section (if the weight matrix is chosen  as in Remark~\ref{rem:opt}) are close to being  asymptotically efficient. Figure~\ref{fig} plots the value of the constant $\sqrt{C(H)}$ as a function of $H$, where $C(H)$ is the asymptotic variance of the optimal GMM estimator $\wh H^n$ from Theorem~\ref{thm:est} in the mfBm model \eqref{eq:mfBm}, that is, $C(H)$ is the constant that satisfies $\Delta_n^{-1/2}(\wh H^n-H) \limd N(0,C(H)/T)$ if $H<\frac12$ and $\Delta_n^{H-1}(\wh H^n-H) \limd N(0,C(H)(\frac\si{\la\rho})^2/T))$ if $H>\frac12$.
\begin{figure}[htb]
	\centering
	\includegraphics[width=0.9\linewidth]{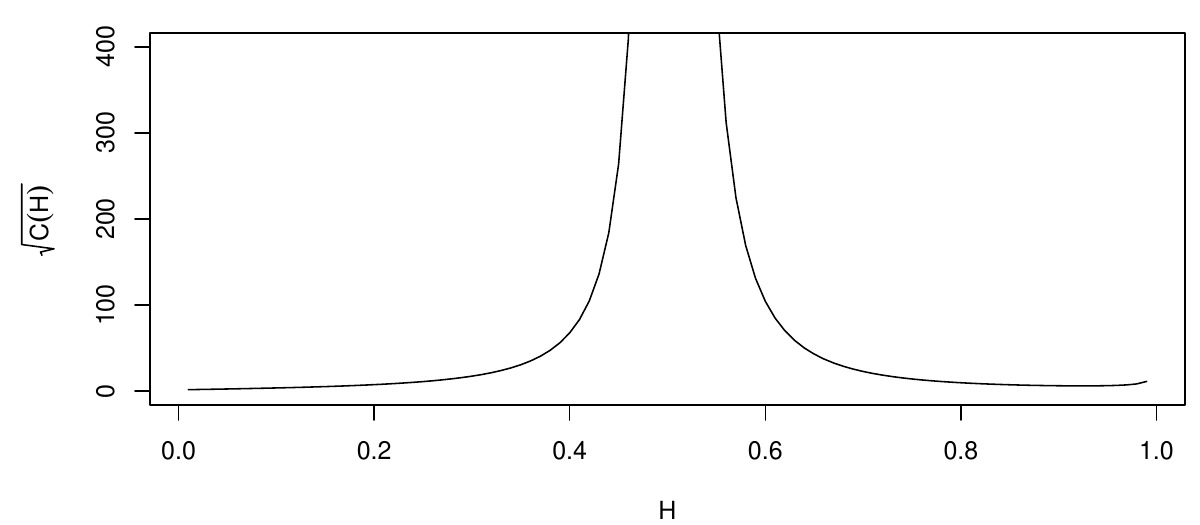}
	\caption{Constant in the asymptotic variance of the optimal GMM estimator of $H$ in an mfBm model.}\label{fig}
\end{figure}

As we can see, while the rate of convergence of $\wh H^n$ for $H>\frac12$ improves as $H\downarrow \frac12$, the associated constant deteriorates. This happens because in the limit as $H\downarrow \frac12$,  mfBm converges in distribution to Brownian motion, with the consequence that the fBm and the BM parts can no longer be separately identified. The same argument clearly applies when $H\uparrow \frac12$, except that here the convergence rate itself does not change with $H$. The divergence of $C(H)$ as $H\to\frac12$ will have an impact on the finite-sample performance of our estimators, as we shall see momentarily in a Monte Carlo simulation.
Besides the singular behavior at $H\approx \frac12$ we also find the asymptotic variance to be very large in absolute terms. This demonstrates the intrinsic difficulty of the statistical problem.

\section{Simulation study}\label{sec:sim}

In order to evaluate the performance of the estimators from Theorem~\ref{thm:est}, we simulate $\{Y_{i\Den}:i=1,\dots,[T/\Den]\}$ from the mfBm model \eqref{eq:mfBm} with $\Den=1/23{,}400$ and $T=20$, which in a typical financial context corresponds to sampling every second and aggregating one month of data. We consider $H\in\{0.1,0.2,0.3,0.4, 0.6, 0.7, 0.8, 0.9\}$ and $\la\in\{-0.9,-0.5,0,0.5,0.9\}$ and further take $\si=0.02$. In order to simulate from a setting that is representative of the magnitude of noise in high-frequency financial data (see e.g., \cite{AY09}), we fix the signal-to-noise ratio to $2:1$, that is, we assume that  the increments of Brownian motion are responsible for $2/3$ of the variance of $\Delta^n_i Y$ and compute $\rho$ accordingly.
We also include $H=0.5$, in which case we let $\rho=\la=0$. 
In Appendix~\ref{app:sim}, we present additional simulation results where we fix $\si$ and $\rho$ instead of the signal-to-noise ratio.

Regarding the tuning parameters, we choose $r=31$ and  $k_n=300 \approx 2\Den^{-1/2}$ in \eqref{eq:acov}, which corresponds to considering autocorrelations up to half a minute and computing the local autocovariance estimates $\wh m^{n,j}_i$ over 5-minute blocks (if $\Den$ has the meaning of one second).  We also experimented with $r=16$ and obtained similar results. In \eqref{eq:acov}, we further choose the Parzen kernel $w(\ell,\ell_n)=w(\ell/(\ell_n+1))$, where $w(x)=(1-6x^2+6x^3)\bone_{\{x\leq 1/2\}} + 2(1-x)^3\bone_{\{x>1/2\}}$ and the sequence $\ell_n$ is chosen according to the optimal procedure by \cite{NW94} (with the details given in Table I B and Table II C of the reference). With this choice, $\wh \Sigma_n$ is positive semidefinite in finite samples and $\ell_n$ is of order $\Den^{-1/5}$ and hence satisfies $\ell_n/\sqrt{k_n}\to0$ and $\ell_n\sqrt{k_n\Den}\to0$ if $k_n$ is order $\Den^{-1/2}$. 

For every simulated path, we first use a classical Ljung--Box test statistic $Q^n_T=\lfloor T/\Den\rfloor(\lfloor T/\Den\rfloor+2)\sum_{\ell=1}^{r-1} (R^n_\ell)^2/(\lfloor T/\Den\rfloor-\ell)$, where $R^n_\ell$ is the sample autocorrelation coefficient of $\{\Delta^n_i Y:i=1,\dots, \lfloor T/\Den\rfloor\}$ at lag $\ell$, to discriminate whether $H=\frac{1}{2}$ or not. Indeed, if $H=\frac{1}{2}$, we have $S^n_T \limd \chi^2_{r-1}$, while $S^n_T\limp \infty$ in the case $H\neq \frac{1}{2}$ because $R^n_\ell\limp \Ga^H_\ell\neq0$.
	If the Brownian case $H=\frac{1}{2}$ is not rejected\color{black}, we let $(H_n,C_n,\La_n,\Pi_n)=(\frac12,\wh V^n_{0,T},0,0)$ be the estimated parameter vector. This initial test is necessary because as $H\to\frac12$, the mfBm model collapses to Brownian motion. Thus, if $H=\frac12$ (or close to $\frac12$), simply minimizing \eqref{eq:optprob} typically produces a value of $H$ which is very close to $\frac12$ and arbitrary splits the total variance among $C$, $\Pi$ and $\La$.
	This is in line with the behavior of the asymptotic variance as $H\to \frac{1}{2}$ depicted in Figure \ref{fig}. Therefore, the initial test we perform can be seen as shrinking our estimators towards the Brownian model to circumvent the weakly identified regime. 
	
	If $H=\frac12$ is rejected, we let $(H_n,C_n,\La_n,\Pi_n)$ be a numerical solution to \eqref{eq:optprob} found in the following way: For each candidate $H$, we first
	we run a weighted linear regression of $\calw_n^{1/2}\wh V^n_T$ on $\calw_n^{1/2}e_1$, $\Den^{H-1/2}\calw_n^{1/2}\Phi^H$ and $\Den^{2H-1}\calw_n^{1/2}\Ga^H$ (with intercept forced to be $0$ and the optimal weight matrix $\calw_n=\wh \Sigma_n^{-1}$) with the constraint that $C$, $\La$ and $\Pi$ must satisfy $\Pi \geq0$,  $C \geq0$ and $\La ^2\leq\Pi C$. This is to reflect the fact that $\rho^2\geq0$, $\si^2\geq0$ and $\La_T^2\leq \Pi_TC_T$.  Denoting the resulting coefficients by $C_n(H)$, $\La_n(H)$ and $\Pi_n(H)$, we construct  $H_n$ by minimizing the objective function $H\mapsto \mathrm{score}(H)$ on the interval $(0,1)$, where $\mathrm{score}(H)$ is the sum of squared residuals in the regression analysis associated with $H$ but set to $\mathrm{score}(H)=\infty$ if  $C_n(H)/(\Pi_n(H)\Den^{2H-1}+\Phi^H_0\La_n(H)\Den^{H-1/2}+C_n(H))>0.99$. In the latter case, we consider the fractional part as practically absent if the Brownian motion part accounts for more than 99\% of the variance of increments. Indeed, if $H$ is close to or equal to $\frac12$, it can happen that adding a tiny fractional component with $H$ very close to $0$ can achieve a higher score value, which  is undesirable in practice.
	
 Finally, we define the remaining estimators by $C_n=C_n(H_n)$, $\La_n=\La_n(H_n)$ and $\Pi_n=\Pi_n(H_n)$. If $\mathrm{score}(H)=\infty$ for all $H$, we let $H_n=0.5$ and $(H_n,C_n,\La_n,\Pi_n)=(\frac12,\wh V^n_{0,T},0,0)$. Comparing with Theorem~\ref{thm:est}, we note that $(H_n,C_n,\La_n,\Pi_n)$ is  equal to $(\wh H^n,\wh C^n_T,\wh \La^n_T,\wh\Pi^n_T)$ unless one of the constraints or exceptions above occur (which happen with asymptotically vanishing probability). Having obtained these estimators using $T=20$ days of data, we further estimate integrated volatility on the last day, that is, $C_{20}-C_{19}=\si^2$, by rerunning the weighted and constrained linear regression mentioned above, but using only data of the last day and with $H$ fixed at $H_n$ (which was previously obtained using 20 days of data). We denote this estimator by $C^{[19,20]}_n$. In a financial context, this mimics daily estimation of integrated volatility based on an estimate of $H$ obtained from a moving window of one month.

\begin{table*}
	\caption{Median and interquartile range of $H_n$ based on 1{,}000 simulated paths.}
	\label{tab:1}
	\begin{tabular}{@{}cccccc@{}}
		& \multicolumn{5}{c}{$\la$}\\
		\hline 
		$H$ & \multicolumn{1}{c}{$-0.9$}&\multicolumn{1}{c}{$-0.5$}
		& \multicolumn{1}{c}{0} & \multicolumn{1}{c}{$0.5$}&\multicolumn{1}{c}{$0.9$}
		\\
		\hline
		$0.1$  & 0.0995 &  						0.0989&   					0.0960   & 					0.0976& 					0.1492  \\
		~ &  [0.0939, 0.1058]  & [0.0860, 0.1133] & [0.0467,  0.1364]& [0.0582, 0.1951]& [0.1123,   0.2499] \smallskip \\
		$0.2$    & 0.2000 &						0.1996   & 					0.1975    &					0.2104&  				0.2590  \\
		~ &[0.1959,     0.2038]&[0.1868,  0.2121]&[0.1139,  0.2443]&[0.1769,  0.3203]&[0.2237,   0.3646] \smallskip \\
		$0.3$  &   0.2999   &   				0.2992   &   				0.2902  &    					0.3292  &  					0.3603   \\
		~& [0.2953,  0.3044]  &[0.2806,  0.3166]  & [0.2071, 0.3615]  & [0.2887, 0.4081] & [0.3184, 0.4198] \smallskip \\
		$0.4$  &   0.3994  &  					0.3981   &    				0.3789  &  					0.4019 &   						0.4148 \\
		~& [0.3818,  0.4084]  & [0.3192, 0.4157] & [0.3493, 0.4287]  & [0.3699,  0.4389]  & [0.3775,  0.4679]\smallskip \\
		$0.5$ &   &  & 0.5000 &  & \\
		~&   &  & [0.5000,0.5000] &  &\smallskip \\
		$0.6$  & 0.5999 &  						0.5986&   					0.5954   & 					0.5977& 					0.5818  \\
		~ &  [0.5942, 0.6133]  & [0.5888, 0.6441] & [0.5726,  0.6463]& [0.5663, 0.6255]& [0.5428,   0.6151] \smallskip \\
		$0.7$    & 0.6994 &						0.6991   & 					0.6983    &					0.7044&  				0.6629  \\
		~ &[0.6922,     0.7071]&[0.6888,  0.7107]&[0.6739,  0.7336]&[0.6298,  0.7332]&[0.6012,   0.6962] \smallskip \\
		$0.8$  &   0.7978   &   				0.7975   &   				0.7981  &    					0.7975  &  					0.7879   \\
		~& [0.7916,  0.8055]  &[0.7897,  0.8072]  & [0.7856, 0.8126]  & [0.7688, 0.8364] & [0.7354, 0.8081] \smallskip \\
		$0.9$  &   0.8918  &  					0.8913   &    				0.8905  &  					0.8895 &   						0.8885 \\
		~& [0.8819,  0.9058]  & [0.8801, 0.9062] & [0.8778, 0.9074]  & [0.8727,  0.9118]  & [0.8635,  0.9067]\smallskip \\
		\hline
	\end{tabular}
\end{table*}

Table~\ref{tab:1} summarizes the results for $H_n$. If $\la=-0.9$ or $\la=-0.5$, the estimator $H_n$ is centered around the true value of $H$ with only low to moderate dispersion uniformly for all considered values of $H$. At $\la=0$ or $\la=0.5$, $H_n$ is still relatively centered around its true value but there is a noticeable increase in the variability of the estimates. At $\la=0.9$, the estimator $H_n$   exhibits a clear upward (resp., downward) bias for $H<0.5$ (resp., $H>0.5$). It is interesting that this bias together with an increase in the spread only appears at the positive end but not at the negative end of $\la$. In fact, additional plots  (not shown here to save space) reveal that for negative values of $\la$, the empirical distribution of $H_n$ has a symmetrical bell shape around the true value of $H$, while for positive values of $\lambda$, the empirical distribution of $H_n$ becomes bimodal, with one local maximum around $H$ and another one at some value not far from $\frac12(\frac12+H)$. We believe that this is due to the fact that the original fBm with Hurst index $H$ and the fictitious one with Hurst parameter $\frac12(H+\frac12)$ in the case of positive (resp., negative) $\la$ introduce return autocorrelations with the same (resp., opposite) sign, making it harder (resp., easier) to separate them. 

\begin{table*}
	\caption{Median and interquartile range  of $C^{[19,20]}_n/\si^2$ based on 1{,}000 simulated paths.}
	\label{tab:2}
	\begin{tabular}{@{}cccccc@{}}
		& \multicolumn{5}{c}{$\la$}\\
		\hline 
		$H$ & \multicolumn{1}{c}{$-0.9$}&\multicolumn{1}{c}{$-0.5$}
		& \multicolumn{1}{c}{0} & \multicolumn{1}{c}{$0.5$}&\multicolumn{1}{c}{$0.9$}
		\\
		\hline
	$0.1$  & 1.0005 &  						1.0009&   					1.0064   & 					1.0199& 					1.0229  \\
~ &  [0.9614, 1.0390]  & [0.9539, 1.0461] & [0.9585,  1.0553]& [0.9728, 1.0738]& [0.9698,   1.0940] \smallskip \\
$0.2$    & 0.9994 &						0.9984   & 					1.0103    &					1.0442&  				1.0465  \\
~ &[0.9532,     1.0491]&[0.9325,  1.0753]&[0.9506,  1.1020]&[0.9774,  1.1547]&[0.9604,   1.1942] \smallskip \\
$0.3$  &   1.0004   &   				1.0029   &   				1.0302  &    					1.0759  &  					1.0722   \\
~& [0.9232,  1.0882]  &[0.8592,  1.1809]  & [0.9414, 1.2249]  & [0.9576, 1.3396] & [0.9246, 1.4233] \smallskip \\
$0.4$  &   0.9092  &  					0.8077   &    				1.0487  &  					1.1634 &   						1.2228 \\
~& [0.4252,  1.3912]  & [0.5862, 1.4440] & [0.8745, 1.4470]  & [0.9696,  1.6896]  & [0.9840,  2.0461]\smallskip \\
$0.5$ &   &  & 1.0007 &  & \\
~&   &  & [0.9942,1.1068] &  &\smallskip \\
	$0.6$  & 0.8722 &  						0.8084&   					1.0727   & 					1.1545& 					1.1850  \\
~ &  [0.3602, 1.7096]  & [0.4232, 1.5999] & [0.8752,  1.6210]& [0.9570, 1.5893]& [0.9590,   1.7388] \smallskip \\
$0.7$    & 1.0109 &						1.0158   & 					1.0128    &					1.0727&  				1.0874  \\
~ &[0.7115,     1.3561]&[0.7783,  1.2695]&[0.9008,  1.2201]&[0.9859,  1.2302]&[0.9627,   1.3477] \smallskip \\
$0.8$  &   1.0240   &   				1.0058   &   				0.9995  &    					1.0215  &  					1.0297   \\
~& [0.8769,  1.1752]  &[0.8721,  1.1419]  & [0.9220, 1.0882]  & [0.9783, 1.0730] & [0.9862, 1.0846] \smallskip \\
$0.9$  &   1.0305  &  					1.0005   &    				0.9879  &  					1.0035 &   						1.0208 \\
~& [0.9144,  1.1296]  & [0.8994, 1.1160] & [0.9251, 1.0768]  & [0.9647,  1.0469]  & [0.9875,  1.0553]\smallskip \\
		\hline
	\end{tabular}
\end{table*}

The situation is different for the volatility estimator $C^{[19,20]}_n$, as Table~\ref{tab:2} reveals. Here the results vary much less with $\la$ but mainly with the value of $H$ itself. Both the median bias and the interquartile range tend to increase as $H$ gets closer to but does not reach $\frac12$. This is because the fBm, the fictitious fBm and the Brownian motion collapse  to one in the limit as $H\to\frac12$, making it harder to distinguish the three as $H$ approaches $\frac12$ (see the discussion at the end of   Section~\ref{sec:lb}). It is interesting to note that the volatility estimator performs quite well even if $H<\frac14$, even though in theory volatility is not identifiable in this case. This, of course, is due to our simulation setup, in which we fix the signal-to-noise ratio rather than $\si$ and $\rho$. Indeed, if we fix $\si$ and $\rho$ instead, then the signal-to-noise ratio decreases sharply with $H$ (e.g., if $\si=\rho$ and $\Delta_n=23{,}400$, the signal-to-noise ratio will be in the range 1:3000 to 1:3200, depending on $\la$, at $H=0.1$). In additional simulations in Appendix~\ref{app:sim}, where we fix $\si$ and $\rho$, we do see that the variance of $C_n^{[19,20]}$ increases as $H$ becomes smaller.

\begin{table*}
	\caption{Median and interquartile range   of $\La_n/\sqrt{C_n\Pi_n}$ (defined as $0$ if $\Pi_n=0$) based on 1{,}000 simulated paths.}
	\label{tab:3}
	\begin{tabular}{@{}cccccc@{}}
		& \multicolumn{5}{c}{$\la$}\\
		\hline 
		$H$ & \multicolumn{1}{c}{$-0.9$}&\multicolumn{1}{c}{$-0.5$}
		& \multicolumn{1}{c}{0} & \multicolumn{1}{c}{$0.5$}&\multicolumn{1}{c}{$0.9$}
		\\
		\hline
		$0.1$  & -0.9005 &  						-0.5007&   					0.0104   & 					0.5690& 					0.5619  \\
		~ &  [-0.9069, -0.8949]  & [-0.5248, -0.4749] & [-0.1297,  0.2866]& [0.0162, 0.9707]& [-0.0461,   0.7417] \smallskip \\
		$0.2$    & -0.9001 &						-0.4999   & 					0.0113    &					0.5377&  				0.4909  \\
		~ &[-0.9050,     -0.8945]&[-0.5382,  -0.4569]&[-0.1839,  0.4411]&[-0.1937,  0.6355]&[-0.3396,   0.6186] \smallskip \\
		$0.3$  &   -0.8996   &   				-0.4970   &   				0.0162  &    					0.4539  &  					0.4700   \\
		~& [-0.9089,  -0.8903]  &[-0.5864,  -0.3905]  & [-0.4076, 0.6350]  & [-0.4873, 0.5873] & [-0.4875, 0.6017] \smallskip \\
		$0.4$  &   -0.9040 &  					-0.4844   &    				0.5423  &  					0.5490 &   						0.5130 \\
		~& [-0.9265,  -0.8240]  & [-0.6839, 0.8631] & [-0.4863, 0.6871]  & [-0.3783,  0.6134]  & [-0.5045,  0.9777]\smallskip \\
		$0.6$  & -0.9209 &  						-0.5482&   					-0.1145   & 					0.6914& 					0.6545  \\
		~ &  [-0.9997, -0.9035]  & [-0.6673, 0.9978] & [-0.4754,  0.8584]& [-0.3639, 0.7694]& [-0.3771,   0.9811] \smallskip \\
		$0.7$    & -0.9020 &						-0.5082   & 					-0.0196    &					0.6289&  				0.4570  \\
		~ &[-0.9171,     -0.8824]&[-0.5955,  -0.3894]&[-0.2687,  0.3519]&[-0.2415,  0.8973]&[-0.4257,   0.8507] \smallskip \\
		$0.8$  &   -0.9046   &   				-0.5117   &   				-0.0089  &    					0.4797  &  					0.8148   \\
		~& [-0.9166,  -0.8866]  &[-0.5536,  -0.4564]  & [-0.1019, 0.1021]  & [0.2148, 1.0000] & [0.2354, 0.9998] \smallskip \\
		$0.9$  &   -0.9071  &  					-0.5226   &    				-0.0317  &  					0.4544 &   						0.8219 \\
		~& [-0.9174,  -0.8929]  & [-0.5552, -0.4771] & [-0.0952, 0.0486]  & [0.3233,  0.6273]  & [0.5653,  1.0000]\smallskip \\
		\hline
	\end{tabular}
\end{table*}

Next, we report simulation results for $\La_n/\sqrt{C_n\Pi_n}$ as an estimator of $\lambda$ in Table~\ref{tab:3}. Here the picture is closer to what we observed for $H_n$. For nonpositive values of $\lambda$, the estimator performs relatively well (with exceptions), while biases start to show up as $\la$ moves into the positive range. In fact, comparing the results between $\la=0.5$ and $\la=0.9$, it seems  that the estimator $\La_n/\sqrt{C_n\Pi_n}$ has a hard time distinguishing between these two cases. We  do not have a plausible explanation for this  behavior.

Finally, we consider the simulation results for $\Pi_n$, which can be found in Table~\ref{tab:4}. Here the impact of $\la$ is particularly striking: the estimator shows a relatively good performance for negative values of $\la$ and is practically useless for positive values of $\la$. We conjecture that the cause is the same as before: with positive values of $\lambda$, both the original and the fictitious fBm components lead to autocorrelations of the same sign, making it difficult to separate them. 

\begin{table*}
	\caption{Median and interquartile range  of $\Pi_n/(20\rho^2)$ based on 1{,}000 simulated paths.}
	\label{tab:4}
	\begin{tabular}{@{}cccccc@{}}
		& \multicolumn{5}{c}{$\la$}\\
		\hline 
		$H$ & \multicolumn{1}{c}{$-0.9$}&\multicolumn{1}{c}{$-0.5$}
		& \multicolumn{1}{c}{0} & \multicolumn{1}{c}{$0.5$}&\multicolumn{1}{c}{$0.9$}
		\\
		\hline
		$0.1$  & 0.9868 &  						0.9726& 					  0.9040   & 				0.8409& 					3.8838  \\
		~ &  [0.8626, 1.1509]  & [0.7019, 1.3861] & [0.2543,  2.5844]& [0.2994, 14.5667]& [1.5175,   84.0271] \smallskip \\
		$0.2$    & 0.9998 &						0.9858   & 					0.9354    &				1.1647&  				6.7238  \\
		~ &[0.9008,     1.1021]&[0.7025,  1.3560]&[0.0912,  3.5754]&[0.4954,  44.1307]&[2.5029,   323.7322] \smallskip \\
		$0.3$  &   0.9954   &   			0.9829   &   				0.8002  &    				2.0526  &  					8.1592   \\
		~& [0.8723,  1.1385]  &[0.5547,  1.6563]  & [0.0457, 8.3227]  & [0.6617, 81.7545] & [2.5609, 228.6105] \smallskip \\
		$0.4$  &   0.9831  &  					0.9140   &    				0.1906  &  				0.8187 &   						2.8029 \\
		~& [0.4771,  1.4473]  & [0.0331, 2.0469] & [0.0851, 4.9195]  & [0.2751,  13.0497]  & [0.6495,  97.6562]\smallskip \\
			$0.6$  & 1.0076 &  						1.0299& 					  1.1053   & 				0.8441& 					1.5601  \\
		~ &  [0.8997, 1.0551]  & [0.7188, 1.1265] & [0.5266,  1.5510]& [0.6166, 3.2883]& [1.1024,   6.5762] \smallskip \\
		$0.7$    & 0.9935 &						0.9992   & 					1.0033    &				1.0671&  				1.3798  \\
		~ &[0.9484,     1.0526]&[0.9406,  1.0637]&[0.9400,  1.0914]&[0.8951,  1.2127]&[1.0304,   1.9491] \smallskip \\
		$0.8$  &   0.9669   &   			0.9662   &   				0.9681  &    				0.9780  &  					0.9236   \\
		~& [0.8898,  1.0709]  &[0.8805,  1.0889]  & [0.8674, 1.1212]  & [0.8290, 1.1729] & [0.8048, 1.0568] \smallskip \\
		$0.9$  &   0.8405  &  					0.8333   &    				0.8348  &  				0.8388 &   						0.8157 \\
		~& [0.6926,  1.1298]  & [0.6825, 1.1369] & [0.6746, 1.1419]  & [0.6578,  1.1673]  & [0.6453,  1.1054]\smallskip \\
		\hline
	\end{tabular}
\end{table*}

In summary, we draw the following conclusions from the simulation study for a scenario where the signal-to-noise ratio is fixed:
\begin{itemize}
	\item The statistical estimation of mixed semimartingale models is a hard task in general. Even in a parametric mfBm model, the behavior of the estimators $H_n$, $\Pi_n$, $C_n^{[19,20]}$ and $\La_n/\sqrt{\Pi_nC_n}$ is quite  different for different true parameter values.
	\item If $\la$ is negative and $H$ is bounded away from $\frac12$, all estimators $H_n$, $\Pi_n$, $C_n^{[19,20]}$ and $\La_n/\sqrt{\Pi_nC_n}$ perform relatively well. This is exactly the case where the statistical properties of the fractional component,  the fictitious fractional component and the Brownian motion component are sufficiently distinct to  separate  them from each other.
	\item In all other cases, at least two of three are statistically similar, making it intrinsically difficult  to disentangle them. 
\end{itemize}

\section{Proof of Theorem \ref{thm:intro}, Theorem~\ref{thm:est}, Corollary~\ref{cor:var} and Theorem \ref{thm:lb}}\label{sec:6}

\begin{proof}[Proof of Theorem \ref{thm:intro}]
One could derive the first statement of the theorem from general results about multivariate fractional Brownian motion \cite{amblard2012basic}. For the sake of completeness, we give a short direct proof here.
Since both  $Y$ and $\widetilde{Y}$ are centered Gaussian processes with stationary increments, it suffices to study the variance of increments.
On the one hand, 
$
	\E [(\widetilde{Y}_{t+h}-\widetilde{Y}_t)^2 ]
	= \sigma^2 h +  2\lambda\rho\sigma(K_H(H+\frac12))^{-1} h^{2\overline{H}} + \rho^2 h^{2H};
$
on the other hand, we can compute $\E[ (Y_{t+h}-Y_t)^2]$ via Itô's isometry.
Writing $h_H(t,s) =K_H^{-1} [ (t-s)^{H- {1}/{2}}_+- (-s)^{H- {1}/{2}}_+ ]$, we have that $B(H)_t=\int_{-\infty}^t h_H(t,s) \, \dd \wt B_s$ and 
\begin{equation} \label{eqn:increments} 
		\E[ (Y_{t+h}-Y_t)^2 ]
		 = \sigma^2 h + \rho^2 h^{2H} + 2 \lambda \rho\sigma \int_{t}^{t+h} [h_H(t+h,s) - h_H(t,s)] \, \dd s,   
 \end{equation}
where the last integral equals
$
  \int_{0}^{h} [h_{H}(h,s)-h_{H}(0,s)]\, \dd s   =  K_H^{-1}\int_0^h (h-s)^{H-\frac{1}{2}}\, \dd s   =  {h^{2\overline{H}}}/(K_H(H+\frac12)). 
$
Thus, $\E[ (Y_{t+h}-Y_t)^2 ] = \E [(\widetilde{Y}_{t+h}-\widetilde{Y}_t)^2]$ and $(Y_t)_{t\geq 0} \overset{d}{=} (\widetilde{Y}_t)_{t\geq 0}$.

It remains to show that  $Y$ is not a semimartingale for $\lambda>0$.
To this end, we may employ the same arguments as \cite{Cheridito01}:
If $H<\frac{1}{2}$, the process has an infinite quadratic variation, which contradicts the semimartingale property.  If $H\in(\frac{3}{4}, 1)$, then the law of $\sigma W + \rho W(H)$ is locally equivalent to that of $\sigma W$. Thus,  $Y$ is locally equivalent to $\sigma W + (2\lambda\rho\sigma / (K_H(H+\frac12)))^{1/2} W(\overline{H})$. 
Since $\overline{H}\in(\frac{1}{2}, \frac{3}{4})$,   \cite[Theorem 1.7]{Cheridito01} shows that $Y$ is not a semimartingale. If $H\in(\frac{1}{2}, \frac{3}{4}]$, then $Y$ is the sum of three independent (fractional) Brownian motions. 
The proof given in \cite[Section 4]{Cheridito01},  which is presented for two components, straightforwardly generalizes to the case of three fractional Brownian motions, showing
that $Y$ is not a semimartingale.
\end{proof}

\begin{proof}[Proof of Theorem~\ref{thm:est}]
	We want to apply \cite[Theorem A.2]{MP22}, so we verify the associated conditions called  (E.1) and (E.2)'. Note that the latter theory is formulated for classical weak convergence but readily extends to stable convergence. 
	We only consider the case $H\in(\frac14,\frac12)\cup(\frac12,\frac34)$ as the other two cases $H\in(0,\frac14)$ and $H\in(\frac34,1)$ are analogous. For (E.1), define the matrices $A_n=A_n(\theta), B_n=B_n(\theta)\in\R^{4\times 4}$ as
	\begin{align*}
		\begin{dcases}
		A_n 
		= \Delta_n^{\frac{1}{2}-2H} B_n, \quad B_n=
		\begin{pmatrix} \Delta_n^{1-2H}&0&0&2\Delta_n^{1-2H}\log(\Delta_n^{-1})\Pi \\ 0&1&0&0 \\ 0&0&\Delta_n^{\frac{1}{2}-H}&0 \\ 0&0&0& \Delta_n^{1-2H} \end{pmatrix} & \text{ for }H<\tfrac{1}{2}, \\
		A_n 
		= \Delta_n^{-\frac{1}{2}}B_n, \quad B_n=
		\begin{pmatrix} \Delta_n^{\frac{1}{2}-H}&0&\Delta_n^{\frac{1}{2}-H}\log (\Delta_n^{-1}) \Lambda&0 \\ 0&1&0&0 \\ 0&0&\Delta_n^{\frac{1}{2}-H}&0 \\ 0&0&0& \Delta_n^{1-2H} \end{pmatrix} & \text{ for }H>\tfrac{1}{2}. \end{dcases}
	\end{align*}
	Since $
		D_\theta \mu_n(\theta) =( \Delta_n^{H-\frac{1}{2}}(\partial_H \Phi^H-\Phi^H\log \Den^{-1})\Lambda + \Delta_n^{2H-1} (\partial_H\Gamma^H-2\Ga^H\log \Den^{-1})  \Pi , e_1,\linebreak \Delta_n^{H-\frac{1}{2}} \Phi^H, \Delta_n^{2H-1} \Gamma^H)\in\R^{r\times 4}$,
		we have by Theorem \ref{cor} that 
	\[ \begin{cases} A_n(\Theta_T)F_n(\Theta_T)	\stackrel{\mathrm{st}}{\longrightarrow} -2 ( \partial_H \Gamma^H \Pi, e_1, \Phi^H, \Ga^H)^T \mathcal{W}\calz_T &\text{for } H\in(\frac14,\frac12),\\ A_n(\Theta_T)F_n(\Theta_T)	\stackrel{\mathrm{st}}{\longrightarrow} -2(\partial_H \Phi^H \Lambda, e_1, \Phi^H, \Ga^H )^T\mathcal{W} \calz'_T &\text{for } H\in(\frac12,\frac34). \end{cases}\]
		 This proves (E.1) in \cite{MP22}.
	
	For (E.2)', note that  continuous differentiability of $F_n(\theta)$ around $\Theta_T$ is clear. Next, we observe that
	\begin{align*}
		D_\theta F_n(\theta) = -2D^2_\theta \mu_n(\theta)^T \mathcal{W}_n ( \widehat{V}^n_T-\mu_n(\theta)  ) +2 D_\theta \mu_n(\theta)^T\mathcal{W}_nD_\theta \mu_n(\theta).
 	\end{align*}
	A straightforward computation shows that
	$2B_n(\Theta_T) [ D_\theta \mu_n(\theta)^T\mathcal{W}_n D_\theta \mu_n(\theta) ]B_n(\Theta_T)^T \limp 2E$, locally uniformly in a shrinking neighborhood of size $r_n=1/(\log\Delta_n^{-1})^2$ around $\Theta_T$. Applying Theorem~\ref{cor}, we further have    $B_n  (\Theta_T) D^2_\theta\mu_n(\theta)^T\mathcal{W}_n (\widehat{V}^n_T-\mu_n(\theta))  B_n(\Theta_T)^T \limp 0$ locally uniformly. Hence,
\begin{equation}\label{eq:aux} 
		\sup_{\theta\,:\, \lvert\theta-\Theta_T\rvert\leq 1/(\log\Delta_n^{-1})^2} \lVert B_n(\Theta_T) D_\theta F_n(\theta)B_n(\Theta_T)^T - 2E\rVert \limp0. 
\end{equation}
	Finally, by \eqref{eq:ABCW-1} and \eqref{eq:ABCW-2}, we can  check that $\lVert B_n(\Theta_T)^T\rVert\lVert B_n(\Theta_T)A_n(\Theta_T)^{-1}\rVert/r_n\limp0$ in the range of $H$ we consider, which  proves (E.2)' in 
	\cite{MP22}, with $C_n=B_n^T$ and $W=2E$.
	
	The matrix $E$ is regular because the vectors $e_1, \partial_H \Gamma^H, \Phi^H, \Gamma^H\in\R^{r}$ are linearly independent. For $r\to \infty$, this is evident as all four vectors have different decay rates. For $r\geq 5$ fixed, we can check that the $3\times 3$ submatrix consisting of the entries two, three, and five (i.e.\ lags $1,2,4$) of $\partial_H \Gamma^H, \Phi^H, \Gamma^H$ has a non-zero determinant. We have verified that this is the case for $H\neq \frac{1}{2}$, using a computer algebra system.
	Analogously, we have verified the regularity of the same matrix based on $\partial_H \Phi^H, \Phi^H, \Gamma^H$.
	Thus, \cite[Theorem A.2]{MP22} yields, for $H\in (\frac{1}{4},\frac{1}{2})$,
	\begin{align*}
		A_n(\Theta_T) B_n(\Theta_T)^{-1} E (B_n(\Theta_T)^{-1})^T  ( \wh{\Theta}_T^n - \Theta_T  ) \stackrel{\mathrm{st}}{\longrightarrow} (\partial_H \Gamma^H \Pi_T, e_1, \Phi^H, \Ga^H)^T \mathcal{W}\calz_T,
	\end{align*}
	which is equivalent to \eqref{eq:CLT-est-1}. For $H\in(\frac{1}{2},\frac{3}{4})$, we obtain \eqref{eq:CLT-est-2}. 
	
	Part 2 (resp., Part 4) of the theorem can be derived along the same lines, but with the second (resp., fourth) row and column (out of four) deleted from all vectors and matrices. For Part 5, we restrict ourselves to the setting of Part 3, where $H\in(\frac34,1)$ (the proof in the setting of Part 2 is similar). On the event $F_n(\wh \Theta^n_T)=0$, which happens with probability converging to $1$, we have
	\[ 0=F_n(\wh \Theta^n_T) = -2D_\theta\mu_n(\wh\Theta^n_T)^T\calw_n(\wh V^n_T-\mu_n(\wh\Theta^n_T)). \]
	Next, we introduce the matrix $G_n(\theta)=((\partial_H\Ga^H-2\Ga^H\log\Den^{-1})\Den^{2H-1}\Pi,0,0)\in\R^{r\times 3}$ and denote the restriction of a matrix $M\in\R^{4\times 4}$  to the upper left $3\times 3$-corner by  $M^{(2)}$ and the restriction of a vector $v\in \R^4$ to the first three entries by $v^{(2)}$. Now, applying $(\cdot)^{(2)}$ to both sides of the previous display and  multiplying the result by $A^{(2)}_n(\Theta^{(2)}_T)$ (recall the definition of  $A_n(\theta)$ and $B_n(\theta)$ from the beginning of this proof), we obtain
	\begin{align*}
	0	&= - 2A^{(2)}_n(\Theta^{(2)}_T)(D_{\theta^{(2)}}\mu^{(2)}_n(\ov\Theta^n_T)+G_n(\wh\Theta^n_T))^T\calw_n(\wh V^n_T-\mu_n^{(2)}(\ov\Theta^n_T)-\Den^{2\wh H^n-1}\Ga^{\wh H^n}\wh\Pi^n_T) \\
		&= - 2A^{(2)}_n(\Theta^{(2)}_T)D_{\theta^{(2)}}\mu^{(2)}_n(\ov\Theta^n_T)^T\calw_n(\wh V^n_T-\mu_n^{(2)}(\ov\Theta^n_T)) + o_\bbp(1)\\
		&=A^{(2)}_n(\Theta^{(2)}_T)F_n^{(2)}(\ov\Theta^n_T)+ o_\bbp(1).
	\end{align*}
	For the second equality, note that $H\in(\frac34,1)$ and that $A^{(2)}_n(\Theta^{(2)}_T)$, $A^{(2)}_n(\Theta^{(2)}_T)G_n(\wh\Theta^n_T)^T$ and $D_{\theta^{(2)}}\mu^{(2)}_n(\ov\Theta^n_T)$ have matrix norms of order $\Den^{-1/2}$, $\Den^{H-1}\log \Den^{-1}$ and $1$, respectively, while $\wh V^n_T-\mu_n^{(2)}(\ov\Theta^n_T)=O_\bbp(\Den^{1/2})$ by Theorem~\ref{cor}. With high probability, $F_n^{(2)}(\wh\Theta^{n,(2)}_T)=0$, so in this case, Taylor's theorem gives us some $\wt\Theta^n_T$ between $\ov\Theta^n_T$ and $\wh\Theta^{n,(2)}_T$ such that
	\begin{align*}
	0	&=A^{(2)}_n(\Theta^{(2)}_T)D_{\theta^{(2)}}F_n^{(2)}(\wt \Theta^n_T)(\ov \Theta^n_T-\wh\Theta^{n,(2)}_T)+o_\bbp(1) \\
		&=A^{(2)}_n(\Theta^{(2)}_T)B^{(2)}_n(\Theta^{(2)}_T)^{-1}[B^{(2)}_n(\Theta^{(2)}_T)D_{\theta^{(2)}}F_n^{(2)}(\wt \Theta^n_T)B^{(2)}_n(\Theta^{(2)}_T)^T]\\
		&\quad\qquad\times(B^{(2)}_n(\Theta^{(2)}_T)^T)^{-1}(\ov \Theta^n_T-\wh\Theta^{n,(2)}_T)+o_\bbp(1)
	\end{align*}
	As in \eqref{eq:aux}, one can show that $B^{(2)}_n(\Theta^{(2)}_T)D_{\theta^{(2)}}F_n^{(2)}(\wt \Theta^n_T)B^{(2)}_n(\Theta^{(2)}_T)^T\limp 2E^{(2)}$. Since $A^{(2)}_n(\Theta^{(2)}_T)B^{(2)}_n(\Theta^{(2)}_T)^{-1}=\Den^{-1/2}\mathrm{Id}_3$, $\Den^{1/2}B^{(2)}_n(\Theta^{(2)}_T)=D_n^{(2)}$ and $E^{(2)}$ is regular, we conclude that $(D_n^{(2)})^{-1}(\ov \Theta^n_T-\wh\Theta^{n,(2)}_T)\limp0$.
\end{proof}

\begin{proof}[Proof of Corollary~\ref{cor:var}]
As before, we only consider one case, namely when $H\in(\frac12,\frac34)$. The other cases are similar. Let $$\ov m^{n,j}_i = \si^2_{(i-1)\Den}\bone_{\{j=0\}} + \si_{(i-1)\Den}\rho_{(i-1)\Den}\Phi^H_j\Den^{H-1/2} + (\rho_{(i-1)\Den}^2+\rho^{\prime 2}_{(i-1)\Den})\Ga^H_j\Den^{2H-1}$$
and  $\ov \psi_j^{(i)}$ (resp., $\ov \Sigma^{(\ell)}_n$, $\ov \Sigma_n$) be defined in the same way as $\psi_j^{(i)}$ (resp., $\Sigma^{(\ell)}_n$, $\Sigma_n$) but with $\ov m^{n,j}_i$ (resp., $\ov\psi^{(i)}$, $\ov\Sigma^{(\ell)}_n$) substituted for $\wh m^{n,j}_i$ (resp., $\psi^{(i)}$, $\wh\Sigma^{(\ell)}_n$). 
	Then, because $\si$, $\rho$ and $\rho'$are at least $\frac12$-H\"older continuous in $L^2$, one can borrow from classical results concerning spot volatility estimation (e.g., \cite[Chapter~13.3]{Jacod12}) to show that $\Den^{-1}(  m^{n,j}_i -\ov m^{n,j}_i ) = O_\bbp(k_n^{-1/2}\vee \sqrt{k_n\Den})$ uniformly in $j$, which implies $\Den^{-2}(\wh\Sigma_n-\ov\Sigma_n) = O_\bbp(\ell_n(k_n^{-1/2}\vee \sqrt{k_n\Den}))\limp 0$ by our assumptions on $\ell_n$ and $k_n$. Next, again because $\si$, $\rho$ and $\rho'$ are $\frac12$-H\"older continuous in $L^2$, we have   $\Den^{-2}\ov\Sigma_n = \int_0^T [c_n^{(0)}(s) + \sum_{\ell=1}^{\ell_n} w(\ell,\ell_n)(c_n^{(\ell)}(s)+c_n^{(\ell)}(s)^T)]\,\dd s + O_\bbp(\ell_n\sqrt{\Den})$, where 
	\begin{align*}
	&c_n^{(\ell)}(s)_{jj'}	=\si^4_s(2\bone_{\{\ell=j=j'=0\}}+\bone_{\{\ell=0,j=j'>0\}}) + \si^2\rho^2(\Phi^H_\ell\Phi^H_{\lvert j'-j-\ell\rvert} + \Phi^H_{\lvert j'-\ell\rvert}\Phi^H_{j+\ell})\Den^{2H-1} \\
		&\quad+(\rho^2_s+\rho^{\prime 2}_s)^2(\Ga^H_\ell\Ga^H_{\lvert j'-j-\ell\rvert} + \Ga^H_{\lvert j'-\ell\rvert}\Ga^H_{j+\ell})\Den^{4H-2} \\
		&\quad+ \si^3_s\rho_s(\Phi^H_{\lvert j'-j\rvert} (\bone_{\{\ell=0\}}+\bone_{\{\ell=j'-j\}}) + \Phi^H_{j+j'}(\bone_{\{\ell=j'\}}+\bone_{\{\ell=j=0\}}))\Den^{H-1/2}\\
		&\quad+ \si^2_s(\rho^2_s+\rho^{\prime 2}_s)(\Ga^H_{\lvert j'-j\rvert} (\bone_{\{\ell=0\}}+\bone_{\{\ell=j'-j\}}) + \Ga^H_{j+j'}(\bone_{\{\ell=j'\}}+\bone_{\{\ell=j=0\}}))\Den^{2H-1}\\
		&\quad+ \si_s\rho_s(\rho^2_s+\rho^{\prime 2}_s)(\Phi^H_{\ell} \Ga^H_{\lvert j'-j-\ell\rvert}+ \Ga^H_{\ell}\Phi^H_{\lvert j'-j-\ell\rvert}+\Phi^H_{\lvert j'-\ell\rvert}\Ga^H_{j+\ell}+\Ga^H_{\lvert j'-\ell\rvert}\Phi^H_{j+\ell})\Den^{3H-1/2}.
	\end{align*}
Note that all  terms
 defining $c_n^{(\ell)}(s)_{jj'}$ are  summable in $\ell$ because $H<\frac34$ and $\Ga^H_r  = O(r^{2H-2})$ and $\Phi^H_r = O(r^{H-3/2})$ as $r\to\infty$. Therefore, we have $\Den^{-2}\ov\Sigma_n = \diag(2, 1,\dots,1) \int_0^T \si^4_s \,\dd s + O_\bbp(\ell_n\sqrt{\Den} \vee \Den^{H-1/2}) \limp \calc'_T$. Since $\Den^{-3/2}\wh\eta_n D_n  \limp (\partial_H\Phi^H \La_T, e_1,\Phi^H, \Ga^H)$, we obtain
 \begin{align*}
 	D_n^{-1}\mathcal{V}_n(D_n^{-1})^T& =\Den(D_n^T\wh\eta_n^T  {  \mathcal{W}}_n\wh\eta_nD_n)^{-1}D_n^T\wh\eta_n^T  {  \mathcal{W}}_n\wh\Sig_n  {  \mathcal{W}}_n\wh\eta_nD_n(D_n^T\wh\eta_n^T   {  \mathcal{W}}_n\wh\eta_nD_n)^{-1} \\
 	& \limp E^{-1}(\partial_H\Phi^H \La_T, e_1,\Phi^H, \Ga^H)^T \mathcal{W} \calc'_T \mathcal{W}(\partial_H\Phi^H \La_T, e_1,\Phi^H, \Ga^H)E^{-1}.
 \end{align*}
 Recalling \eqref{eq:CLT-est-2}, we have shown that $\mathcal{V}_n$ consistently estimates the asymptotic covariance matrix of $\wh \Theta^n_T$, which is the claim of the corollary.
\end{proof}

For the proof of Theorem~\ref{thm:lb}, we assume $\Den=\frac1n$
to simplify notation.
Since $Y_0 =\wt Y_0=0$, observing $\{Y_{i/n}:i=1,\dots,n\}$ is equivalent to observing the increments $\{\Delta_i^n  Y:i=1,\dots,n\}$.
These increments constitute a stationary centered  Gaussian time series with some covariance matrix $\wt{\Sigma}_n(\theta)\in\R^{n\times n}$.
Noting that \eqref{eqn:increments} is also valid for $\lambda<0$, we find that
\begin{equation}\label{eq:sig}
	\wt{\Sigma}_n(\theta) = \sigma^2 \, n^{-1} I_n + \Pi n^{-2H} \Sigma_n(H) + \Lambda b(H) n^{-2\overline{H}}  \Sigma_n(\overline{H}),
\end{equation}
where $	\overline{H} = \frac12(H+\frac{1}{2})$, $b(H) = 2/\Gamma(H+\tfrac{3}{2})$ and
$\Sigma_n(H)=(\Ga^H_{\lvert i-j\rvert})_{j,k=1}^n$ is the covariance matrix of  $n$ consecutive normalized increments of a fractional Brownian motion with Hurst parameter $H$. Given $\theta_0\in\Theta$ and four nonnegative sequences $r_{1,n},r_{2,n},r_{3,n},r_{4,n}\to0$, we define $\theta_n\in\Theta$ by
\begin{equation}\label{eqn:theta-n}\begin{aligned}
		H(\theta_n) &= H+r_{1,n}, 
		&\quad \sigma^2(\theta_n) &= \sigma^2 + r_{2,n}, \\
		\Lambda(\theta_n) &= \Lambda \frac{b(H)}{b(H+r_{1,n})} n^{r_{1,n}}  (1+r_{3,n}) , 
		&\quad \Pi(\theta_n) &= \Pi n^{2r_{1,n}}  (1+ r_{4,n}),
\end{aligned}
\end{equation}
abbreviating $(H,\sigma,\Lambda,\Pi)=(H,\sigma,\Lambda,\Pi)(\theta_0)$.
The parameter $\theta_n$ is chosen carefully such that
\beq\label{eq:wtsigma}\begin{split}
	\wt{\Sigma}_n(\theta_n) 
	&= (\sigma^2+r_{2,n}) n^{-1} I_n + (1+r_{4,n}) \Pi n^{-2H} \Sigma_n(H+r_{1,n}) \\
	&\quad + (1+r_{3,n}) \Lambda b(H) n^{-2\overline{H}}  \Sigma_n(\overline{H} + \tfrac{r_{1,n}}{2}).
\end{split}\eeq
For now, we only assume that $r_{i,n}\to 0$ as $n\to\infty$ for $i=1,2,3,4$. 

Following the general approach outlined in \cite[Chapter~2]{Tsybakov2008}, we shall prove Theorem~\ref{thm:lb} by deriving sharp KL divergence estimates. Recall that for two covariance matrices $\Sigma_1,\Sigma_2\in\R^{n\times n}$, the KL divergence of the corresponding centered Gaussian distributions is given by
\begin{align*}
	\mathrm{KL}(\Sigma_1\dmid\Sigma_2)=\mathrm{KL}  ( \mathcal{N}(0, \Sigma_1) \dmid \mathcal{N}(0,\Sigma_0)  )
	&= \frac{1}{2} \biggl\{ \tr(\Sigma_0^{-1}\Sigma_1)-n+\log\frac{\det \Sigma_0}{\det \Sigma_1} \biggr\}.
\end{align*}
In the next proposition, which is the main technical estimate in the proof of Theorem~\ref{thm:lb}, we establish an upper bound on the KL divergence
\begin{align*}
	\mathrm{KL}(\theta_n\dmid\theta_0)=\mathrm{KL}(\wt{\Sigma}_n(\theta_n)\dmid \wt{\Sigma}_n(\theta_0)).
\end{align*}
We give the proof in Section~\ref{app:lb}.

\begin{Proposition}\label{lem:KL-complete}
	Suppose that $H=H(\theta_0)\in(0, \frac{1}{2})$.
	For any $\delta>0$ sufficiently small, there exists a $C=C(\theta_0,\delta)$ such that
	\begin{align*}
		\mathrm{KL}(\theta_n \dmid \theta_0) \leq C \begin{cases}
			r_{2,n}^2\,n + (r_{1,n}^2+r_{3,n}^2)\, n^{3-4\overline{H}}  + r_{4,n}^2\, n^{\delta} &\text{if } H\in[\tfrac{3}{4}, 1), \\
			r_{2,n}^2\,n + (r_{1,n}^2+r_{3,n}^2)\, n^{3-4\overline{H}}  + r_{4,n}^2 n^{3-4H} & \text{if }H\in(\tfrac{1}{2}, \tfrac{3}{4}), \\
			r_{2,n}^2\, n^{4H-1} + (r_{1,n}^2+r_{4,n}^2)\, n + r_{3,n}^2 \, n^{1-4(\overline{H}-H)}  &\text{if } H\in(\tfrac{1}{4}, \tfrac{1}{2}), \\
			r_{2,n}^2\, n^\delta + (r_{1,n}^2+r_{4,n}^2)\, n + r_{3,n}^2 \, n^{1-4(\overline{H}-H)} & \text{if }H\in(0,\tfrac{1}{4}]. 
		\end{cases}
	\end{align*}
\end{Proposition}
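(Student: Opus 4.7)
The plan is to use the standard second-order KL bound for centered Gaussians. If $M = \Sigma_1-\Sigma_0$ satisfies $\lVert \Sigma_0^{-1/2}M\Sigma_0^{-1/2}\rVert_{\mathrm{op}}\leq \tfrac12$, then writing the eigenvalues of $\Sigma_0^{-1/2}\Sigma_1\Sigma_0^{-1/2}$ as $1+\mu_i$ and expanding $\mu_i - \log(1+\mu_i)\leq \mu_i^2$, one obtains
$$\mathrm{KL}(\Sigma_1\dmid\Sigma_0)\leq \lVert \Sigma_0^{-1/2}M\Sigma_0^{-1/2}\rVert_F^2 = \tr\bigl((\wt\Sigma_n(\theta_0)^{-1}M)^2\bigr).$$
I would apply this with $\Sigma_0=\wt\Sigma_n(\theta_0)$ and $\Sigma_1=\wt\Sigma_n(\theta_n)$, after verifying the operator-norm hypothesis using the consistency of the candidate rates.

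The next step is to exploit the carefully chosen parametrization \eqref{eqn:theta-n}, which was designed precisely so that the prefactors $n^{-2H}$ and $n^{-2\overline H}$ in \eqref{eq:wtsigma} do not shift with $r_{1,n}$. This gives a clean additive decomposition
\begin{align*}
M &= r_{2,n}n^{-1} I_n + r_{4,n}\Pi n^{-2H}\Sigma_n(H+r_{1,n}) + r_{3,n}\Lambda b(H)n^{-2\overline H}\Sigma_n(\overline H+\tfrac{r_{1,n}}{2}) \\
&\quad + \Pi n^{-2H}[\Sigma_n(H+r_{1,n})-\Sigma_n(H)] + \Lambda b(H)n^{-2\overline H}[\Sigma_n(\overline H+\tfrac{r_{1,n}}{2})-\Sigma_n(\overline H)].
\end{align*}
A first-order Taylor expansion in $H$ turns the last two pieces into $r_{1,n}$ times $\partial_H\Sigma_n(H)$ (resp.\ $\partial_{\overline H}\Sigma_n(\overline H)$), plus negligible remainders. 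Using $\lVert\wt\Sigma_n(\theta_0)^{-1}M\rVert_F^2\leq 5\sum_{k=1}^{5}\lVert\wt\Sigma_n(\theta_0)^{-1}M_k\rVert_F^2$, the KL divergence splits into five terms, each homogeneous in a single $r_{i,n}$.

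For each piece, the bound reduces to estimating $\tr\bigl((\wt\Sigma_n(\theta_0)^{-1}M_k)^2\bigr)$. This requires two ingredients. First, sharp two-sided spectral bounds on $\wt\Sigma_n(\theta_0)$: the trivial bound $\wt\Sigma_n(\theta_0)\succeq \sigma^2 n^{-1} I_n$ gives $\lVert\wt\Sigma_n^{-1}\rVert_{\mathrm{op}}\leq n/\sigma^2$, which already yields the $r_{2,n}^2 n$ rate when $H>\tfrac12$; but in the rough regime $H<\tfrac12$ one needs the sharper domination $\wt\Sigma_n(\theta_0)\succeq c\Pi n^{-2H}\Sigma_n(H)$ to recover the identifiability-limited rate $r_{2,n}^2 n^{4H-1}$. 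Second, quantitative Toeplitz estimates for $\Sigma_n(H)$ and its $H$-derivative (whose entries carry a logarithmic factor $\Ga_k^H\log k$); these follow from the standard analysis of the spectral density $f_H(\omega)\asymp|\omega|^{1-2H}$ of fractional Gaussian noise. The logarithmic entries in $\partial_H\Sigma_n(H)$ are precisely the source of the $n^\delta$ losses that appear whenever $\sigma^2$ (in the range $H<\tfrac14$) or $\Pi$ (in the range $H>\tfrac34$) ceases to be identifiable.

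The four cases in the proposition arise by summing the five contributions and matching powers of $n$: which case one is in depends on whether the Brownian or the fractional component dominates the low- or high-frequency spectrum of $\wt\Sigma_n(\theta_0)$. The main obstacle is the spectral analysis in step three, in particular the lower bound $\wt\Sigma_n(\theta_0)\succeq c n^{-2H}\Sigma_n(H)$ for $H<\tfrac12$ and the quantitative control of $\partial_H\Sigma_n(H)$ inside the conjugated Frobenius norm, where the logarithmic factors must be tracked carefully against the spectral density of $\Sigma_n(H)$. I expect these Toeplitz estimates to be deferred to Appendix~\ref{app:lb}.
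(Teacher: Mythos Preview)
Your proposal follows the same skeleton as the paper: reduce KL to a quadratic trace functional, obtain a Loewner lower bound on $\wt\Sigma_n(\theta_0)$, decompose $\delta_n$ additively, and invoke Toeplitz trace asymptotics. Two technical choices differ. First, the paper sidesteps your operator-norm hypothesis by interpolating $A(h)=h\wt\Sigma_n(\theta_n)+(1-h)\wt\Sigma_n(\theta_0)$ and Taylor-expanding $\mathrm{KL}(A(h)\dmid A(0))$ in $h$, which yields $\mathrm{KL}\leq\tfrac14\sup_h\tr(A(h)^{-1}\delta_n A(h)^{-1}\delta_n)$ with no smallness assumption; the price is a uniform-in-$h$ Loewner lower bound on $A(h)$, which is obtained by the same Cauchy--Schwarz argument you allude to. Second, instead of working with $\partial_H\Sigma_n(H)$ and tracking its logarithmic entries, the paper bounds $\delta_n$ at the level of spectral densities: $|f_{H+r_{1,n}}(\lambda)-f_H(\lambda)|\leq Cr_{1,n}|\lambda|^{1-2H-2\delta}$ absorbs the logarithm into a small shift $H\mapsto H+\delta$, so that $\delta_n\leq CB_n$ with $B_n$ a linear combination of $I_n$, $\Sigma_n(H+\delta)$ and $\Sigma_n(\overline H+\delta)$, to which a Lieberman-type Toeplitz trace lemma applies directly. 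This is arguably cleaner than your route, though both work. One correction: your ``trivial'' bound $\wt\Sigma_n(\theta_0)\succeq\sigma^2 n^{-1}I_n$ is not immediate when $\Lambda<0$, since the cross-term $\Lambda b(H)n^{-2\overline H}\Sigma_n(\overline H)$ is then negative definite; one needs the covariance inequality $\sqrt{\Pi\sigma^2}\,b(H)n^{-2\overline H}\Sigma_n(\overline H)\leq\sigma^2 n^{-1}I_n+\Pi n^{-2H}\Sigma_n(H)$ (which the paper proves) to obtain $\wt\Sigma_n(\theta_0)\succeq(1-|\Lambda|/\sqrt{\Pi\sigma^2})\,[\sigma^2 n^{-1}I_n+\Pi n^{-2H}\Sigma_n(H)]$, and this same estimate supplies the sharper lower bound you need for $H<\tfrac12$.
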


\begin{table}[t]
	\caption{Choice of $r_{1,n},\ldots, r_{4,n}$ in the proof of Theorem~\ref{thm:lb}.}
	\label{tab:rates-theta-n}
	{\def\arraystretch{2}\tabcolsep=10pt
		\begin{tabular}{c|c|c|c|c}
			& $H\in(0,\tfrac{1}{4})$ & $H\in (\tfrac{1}{4}, \tfrac{1}{2})$ & $H\in (\tfrac{1}{2}, \tfrac{3}{4})$ & $H\in (\tfrac{3}{4}, 1)$ \\ \hline 
			$r_{1,n}$ & $n^{-\frac{1}{2}}$ & $n^{-\frac{1}{2}}$ & $n^{2\overline{H}-\frac{3}{2}}$  & $n^{2\overline{H}-\frac{3}{2}}$  \\
			$r_{2,n}$ & $0$ & $n^{\frac{1}{2}-2H}$ & $n^{-\frac{1}{2}}$  & $n^{-\frac{1}{2}}$ \\
			$r_{3,n}$ & $n^{2(\overline{H}-H)-\frac{1}{2}}$ & $n^{2(\overline{H}-H)-\frac{1}{2}}$  & $n^{2\overline{H}-\frac{3}{2}}$ & $n^{2\overline{H}-\frac{3}{2}}$ \\
			$r_{4,n}$ & $n^{-\frac{1}{2}}$ & $n^{-\frac{1}{2}}$ & $n^{2H-\frac{3}{2}}$ & $0$ 
		\end{tabular}
	}
\end{table}

\begin{proof}[Proof of Theorem~\ref{thm:lb}]
For any   $\theta_0 = (H,\sigma^2,\Lambda,\Pi)\in\Theta$, we define $\theta_n$ as in \eqref{eqn:theta-n} and
$r_{i,n} = r_0 n^{\alpha_i(H)}$ as shown in Table \ref{tab:rates-theta-n}.
In view of Proposition \ref{lem:KL-complete}, these rates are chosen  such that $\mathrm{KL}(\theta_n \dmid \theta_0) \leq r_0^2 C(\theta_0)$ . 
Upon setting $r_0$ small enough, we find that
\begin{align}
	\mathrm{KL}(\theta_n \dmid \theta_0) \leq \tfrac{1}{9}. \label{eqn:KL-bounded}
\end{align}
Moreover, from \eqref{eqn:theta-n}, it is simple to derive a lower bound on the errors $\theta_n-\theta_0$, component by component. 
In particular, since $r_{1,n}=o( 1/\log n)$, we have 
\begin{align*}
	|\Lambda(\theta_n)-\Lambda(\theta_0)| = \Omega( r_{1,n} \log n + r_{3,n}), 
	\qquad |\Pi(\theta_n)-\Pi(\theta_0)| = \Omega(r_{1,n} \log n+r_{4,n}),
\end{align*}
where $\Omega$ denotes an asymptotic lower bound, that is, $a_n  = \Omega(b_n)$ if $b_n =  {O}(a_n)$.
The resulting bounds on $\theta_n-\theta_0$  are exactly the rates $R_n(\theta)$ of Theorem \ref{thm:lb} (listed in Table~\ref{tab:rates}).

In order to translate these KL estimates into statistical lower bounds, we follow  \cite[Chapter 2]{Tsybakov2008}. Intuitively speaking,
if   $\theta_n$ satisfies \eqref{eqn:KL-bounded}, we cannot consistently decide whether $\theta_n$ or $\theta_0$ is the true parameter. 
Hence, no estimator can converge towards $\theta_0$ faster than $\theta_n$. 
To make this mathematically precise, let $\widehat{\theta}_n$ be any measurable function of $\{Y_{{i}/{n}}:i=1,\ldots, n\}$. 
Note that $\theta_n\in \mathcal{D}_n(\theta_0)$ for $n$ large enough.
Then, for any $c>0$, 
\begin{align*}
	&\sup_{\theta\in\mathcal{D}_n(\theta_0)} \bbp_\theta\Bigl( |(\widehat{\theta}_n-\theta)_k| \geq c  |(\theta_n-\theta_0)_k| \Bigr) \\
	&\qquad\geq \tfrac{1}{2} \bbp_{\theta_0}\Bigl( |(\widehat{\theta}_n-\theta_0)_k| \geq c |(\theta_n-\theta_0)_k| \Bigr)
	 +  \tfrac{1}{2} \bbp_{\theta_n}\Bigl( |(\widehat{\theta}_n-\theta_n)_k| \geq c |(\theta_n-\theta_0)_k| \Bigr)\\
	&\qquad\geq \frac{1}{2} \bbp_{\theta_0}\Bigl( |(\widehat{\theta}_n-\theta_0)_k| \geq c |(\theta_n-\theta_0)_k|\Bigr)
	 +   \tfrac{1}{2} \bbp_{\theta_0}\Bigl( |(\widehat{\theta}_n-\theta_n)_k| \geq c |(\theta_n-\theta_0)_k| \Bigr) - \tfrac{1}{3}. 
\end{align*}
In the last step, we used that the fact that the total variation distance between $\bbp_{\theta_n}$ and $\bbp_{\theta_0}$ is upper bounded by $\sqrt{\mathrm{KL}(\theta_n \dmid \theta_0)/2}\leq\sqrt{\mathrm{KL}(\theta_n \dmid \theta_0)} \leq \frac{1}{3}$.
Now use the union bound to obtain
\begin{align*}
	&  \bbp_{\theta_0}\Bigl( |(\widehat{\theta}_n-\theta_0)_k| \geq c |(\theta_n-\theta_0)_k| \Bigr)
 +   \bbp_{\theta_0}\Bigl( |(\widehat{\theta}_n-\theta_n)_k| \geq c |(\theta_n-\theta_0)_k| \Bigr) \\
	&\qquad\geq \bbp_{\theta_0} \Bigl(   \lvert(\widehat{\theta}_n-\theta_0)_k\rvert\vee \lvert(\widehat{\theta}_n-\theta_n)_k\rvert  \geq c |(\theta_n-\theta_0)_k|  \Bigr) \\
	&\qquad\geq \bbp_{\theta_0} \bigl( |(\theta_n-\theta_0)_k| \,\geq\, 2c |(\theta_n-\theta_0)_k|  \bigr) = 1,
\end{align*}
where the last equality holds for $c\leq \frac{1}{2}$.
Hence,
\begin{align*}
	\sup_{\theta\in\mathcal{D}_n(\theta_0)} \bbp_\theta \Bigl(|(\widehat{\theta}_n-\theta)_k| \geq c  |(\theta_n-\theta_0)_k| \Bigr) \geq \tfrac{1}{6}.
\end{align*}
Because $|(\theta_n-\theta_0)_k| =\Omega( R_n(\theta_0)_k)$, this establishes the claimed minimax rates.
\end{proof}

\section{Proof of Proposition~\ref{lem:KL-complete}}\label{app:lb}

Let $A(h) = h \wt{\Sigma}_n(\theta_n) + (1-h) \wt{\Sigma}_n(\theta_0)$ and   $\delta_n=\wt{\Sigma}_n(\theta_n)-\wt{\Sigma}_n(\theta_0)$.  By
Taylor expansion,
\begin{align*}
	&\mathrm{KL}(A(1) \dmid A(0)) \\
	&\qquad= \frac{d}{dh} \mathrm{KL}(A(h) \dmid A(0)) \Big|_{h=0} + \frac{1}{2} \int_0^1 \int_0^s \frac{d^2}{dh^2} \mathrm{KL}(A(h) \dmid A(0)) \Big|_{h=v}\, \dd v\,\dd s,
\end{align*}
where
\begin{align*}
	\frac{d}{dh} \mathrm{KL}(A(h) \dmid A(0)) 
	&= \frac{1}{2} \tr  ( A(0)^{-1} \delta_n  ) - \frac{1}{2} \tr  ( A(h)^{-1} \delta_n  ) = \frac{1}{2} \tr  ( [A(0)^{-1}-A(h)^{-1}]   \delta_n   ), \\ 
	\frac{d^2}{dh^2} \mathrm{KL}(A(h) \dmid A(0))
	&= \frac{1}{2} \tr  ( A(h)^{-1} \delta_n A(h)^{-1} \delta_n  ).
\end{align*}
Hence,
\begin{equation} \label{eqn:kl-trace}\begin{split}
		\mathrm{KL}(A(1) \dmid A(0)) 
		&= \frac{1}{4} \int_0^1 \int_0^s \tr  ( A(h)^{-1} \delta_n A(h)^{-1} \delta_n  )\, \dd h\, \dd s   \\
		&\leq \frac{1}{4} \sup_{h\in[0,1]} \tr  ( A(h)^{-1} \delta_n A(h)^{-1} \delta_n  ).\end{split}
\end{equation}
In order to find an upper bound for \eqref{eqn:kl-trace}, we will use the following technical lemma.

\begin{Lemma}\label{lem:trace-inequality}
	Let $B$ be a symmetric matrix and $A$ and $A_0$ be symmetric positive semidefinite matrices such that $A-A_0$ is positive semidefinite, too.
	Then
	\begin{align*}
		\tr(A_0 B A_0 B) \leq \tr (A B A_0 B) = \tr(A_0 B A B) \leq \tr(A B A B).
	\end{align*}
\end{Lemma}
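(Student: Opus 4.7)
The plan is to reduce everything to the basic fact that, for any symmetric positive semidefinite matrix $P$ and any matrix $M$, one has $\tr(M^T P M)\geq 0$. Set $D=A-A_0$, which is symmetric positive semidefinite by assumption, and let $C=D^{1/2}$ be its (symmetric) positive semidefinite square root.

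First, the middle equality $\tr(ABA_0B)=\tr(A_0 BAB)$ is just cyclicity of the trace applied to the product $A,B,A_0,B$. Next, for the leftmost inequality I would compute
\begin{equation*}
\tr(ABA_0B)-\tr(A_0 B A_0 B) = \tr(DBA_0B) = \tr(C^2BA_0B) = \tr(CBA_0BC),
\end{equation*}
where the last step is cyclicity. Since $C$ and $B$ are symmetric, $(BC)^T=CB$, so $\tr(CBA_0BC)=\tr((BC)^T A_0 (BC))\geq 0$ because $A_0\succeq 0$. This establishes the first inequality. The rightmost inequality is entirely analogous: cyclicity together with $D=C^2$ yields
\begin{equation*}
\tr(ABAB)-\tr(ABA_0B)=\tr(ABDB)=\tr(CBABC)=\tr((BC)^T A (BC))\geq 0.
\end{equation*}

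There is no real obstacle here; the only point to be mindful of is choosing a \emph{symmetric} square root of $D$ so that the conjugation trick $\tr(CBA_0BC)=\tr((BC)^T A_0 (BC))$ actually produces a Gram-type quadratic form, which is where positive semidefiniteness of $A_0$ (and $A$) is used.
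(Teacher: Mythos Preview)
Your proof is correct. It differs from the paper's argument, which instead sets $C=BA_0B$ (symmetric positive semidefinite) and invokes von Neumann's trace inequality to bound $\tr((A-A_0)C)\geq \sum_i a_i b_{n-i+1}\geq 0$, where the $a_i$, $b_i$ are the descending eigenvalues of $A-A_0$ and $C$; the second inequality then follows by the same reasoning with $A$ in place of $A_0$. Your route is more elementary: by writing $D=A-A_0=C^2$ with a symmetric square root and using cyclicity to turn $\tr(DBA_0B)$ into a Gram-type expression $\tr((BC)^T A_0 (BC))$, you bypass von Neumann's inequality entirely and only need the basic fact that $\tr(M^T P M)\geq 0$ for $P\succeq 0$. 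The paper's approach has the minor advantage of handling both inequalities in one stroke (same $C$, just swap the roles of $A$ and $A_0$), but yours is self-contained and arguably cleaner.
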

\begin{proof} 
	Denote $C=BA_0 B$, which is symmetric positive semidefinite. 
	Von Neumann's trace inequality yields $\tr ( (A-A_0) C) \geq \sum_{i=1}^n a_i b_{n-i+1}$, where $a_i$ and $b_i$ are the descending eigenvalues of $A-A_0$ and $C$, respectively. 
	Since $a_i, b_i\geq 0$, we conclude that $\tr(A_0 C) \leq \tr(A C)$, which proves the first inequality and, consequently, the
	second.
\end{proof}

To bound \eqref{eqn:kl-trace}, we therefore need a lower bound on  $A(h)$ and an upper bound on $\delta_n$.

\begin{Lemma}\label{lem:Ah-lb}
	For any $\theta_0\in\Theta$ and  $\delta>0$, there exists $c=c(\theta_0,\delta)>0$ such that
	\begin{equation}\label{eq:Ah}
		A(h) \geq c  [ n^{-1} I_n + n^{-2H} \Sigma_n(H-\delta)  ], \qquad h\in[0,1].
	\end{equation}
	Here, $\geq$ denotes the  Loewner partial order on the cone of positive semidefinite matrices.
\end{Lemma}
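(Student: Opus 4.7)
The plan is to exploit the Toeplitz structure of $\wt\Sigma_n(\theta)$. Since $Y$ has stationary increments, each summand in \eqref{eq:wtsigma} is a Toeplitz matrix, with respective symbols $\sigma^2 n^{-1}$, $\Lambda b(H) n^{-2\ov H} f_{\ov H}(\lambda)$ and $\Pi n^{-2H} f_H(\lambda)$, where $f_H$ is the spectral density of normalized fractional Gaussian noise with Hurst parameter $H$ (behaving like $|\lambda|^{1-2H}$ near $\lambda=0$ and continuous and positive elsewhere). Since $v^T T_n(f) v=(2\pi)^{-1}\int_{-\pi}^\pi f(\lambda)\lvert\sum_k v_k e^{-\ii k\lambda}\rvert^2\,\dd\lambda$, pointwise inequalities of symbols lift to Loewner inequalities of the associated Toeplitz matrices. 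Hence it suffices to prove a pointwise lower bound on the symbol $\phi_\theta(\lambda)$ of $\wt\Sigma_n(\theta)$.

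The key ingredient is a spectral inequality relating $f_{\ov H}$ to $f_H$. For every $\sigma,\rho\in\R$, the mixed fBm $\sigma B+\rho B(H)$ is a well-defined centered Gaussian process with stationary increments, so the spectral density of its increments at step $1/n$ is nonnegative on $(-\pi,\pi)$ by Bochner's theorem and equals $\sigma^2 n^{-1}+\sigma\rho b(H) n^{-2\ov H} f_{\ov H}(\lambda)+\rho^2 n^{-2H} f_H(\lambda)$. Viewing this as a quadratic in $\rho$ and requiring non-negativity at every $\lambda$ and $\sigma$ forces the discriminant condition
\begin{equation*}
b(H)^2 f_{\ov H}(\lambda)^2 \leq 4\, f_H(\lambda), \qquad \lambda\in(-\pi,\pi),
\end{equation*}
after using $4\ov H=2H+1$.

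Equipped with this inequality, AM-GM applied to the pair $\sigma^2 n^{-1}$ and $\Pi n^{-2H} f_H(\lambda)$ yields
\begin{equation*}
\lvert\Lambda b(H) n^{-2\ov H} f_{\ov H}(\lambda)\rvert \leq 2|\Lambda|\sqrt{n^{-1}\cdot n^{-2H} f_H(\lambda)} \leq \eta(\theta)\bigl[\sigma^2 n^{-1}+\Pi n^{-2H} f_H(\lambda)\bigr],
\end{equation*}
where $\eta(\theta)=|\Lambda|/\sqrt{\sigma^2\Pi}$, which is strictly less than one by the defining property of $\Theta$. Subtracting this bound from $\phi_\theta$ gives
\begin{equation*}
\wt\Sigma_n(\theta) \geq (1-\eta(\theta))\bigl[\sigma^2 n^{-1} I_n + \Pi n^{-2H}\Sigma_n(H)\bigr].
\end{equation*}
For $\theta\in\{\theta_0,\theta_n\}$, the quantities $\eta(\theta)$, $\sigma^2(\theta)$ and $\Pi(\theta)$ stay uniformly bounded away from $1$, $0$ and $0$ respectively because $r_{i,n}\to 0$.

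Finally, to replace $\Sigma_n(H(\theta))$ by $\Sigma_n(H(\theta_0)-\delta)$, I compare spectral densities: near $\lambda=0$ their ratio behaves like $|\lambda|^{2(H(\theta)-H(\theta_0)+\delta)}$, which is bounded for $\delta>0$ small and $H(\theta)$ close to $H(\theta_0)$, while away from $0$ both symbols are continuous and positive. Hence $\Sigma_n(H(\theta_0)-\delta)\leq C\,\Sigma_n(H(\theta))$ uniformly. Combined with $n^{-2H(\theta)}/n^{-2H(\theta_0)}=n^{-2r_{1,n}}\to 1$ (because $r_{1,n}\log n\to 0$), this gives
\begin{equation*}
\wt\Sigma_n(\theta) \geq c\bigl[n^{-1} I_n + n^{-2H(\theta_0)}\Sigma_n(H(\theta_0)-\delta)\bigr], \qquad \theta\in\{\theta_0,\theta_n\},
\end{equation*}
uniformly in $n$, and the bound extends to the convex combination $A(h)$. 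The main obstacle is the extraction of the spectral inequality $b(H) f_{\ov H}\leq 2\sqrt{f_H}$, which encodes the algebraic restriction $\Lambda^2<\sigma^2\Pi$ into a pointwise analytic form and is what ultimately makes the AM-GM absorption of the cross term possible.
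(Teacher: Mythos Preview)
Your proof is correct and arrives at exactly the same intermediate inequality as the paper, namely
\[
\wt\Sigma_n(\theta) \geq \Bigl(1-\tfrac{|\Lambda|}{\sqrt{\sigma^2\Pi}}\Bigr)\bigl[\sigma^2 n^{-1} I_n + \Pi n^{-2H}\Sigma_n(H)\bigr],
\]
but the route to it is genuinely different. The paper works directly at the matrix level: it identifies the cross term $\sqrt{\sigma^2\Pi}\,b(H)n^{-2\ov H}\Sigma_n(\ov H)$ as the symmetrized cross-covariance $\E[XZ^T+ZX^T]$ of the two Gaussian vectors $X_j=|\sigma|\Delta^n_jB$ and $Z_j=\sqrt{\Pi}\,\Delta^n_jB(H)$, and then applies the elementary Loewner inequality $XZ^T+ZX^T\leq XX^T+ZZ^T$ (i.e.\ $(X-Z)(X-Z)^T\geq0$) before taking expectations. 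You instead pass to spectral densities, use Bochner's theorem on the degenerate model $\rho'=0$ to extract the pointwise bound $b(H)^2 f_{\ov H}^2\lesssim f_H$, and then absorb the cross symbol via AM--GM. Your spectral inequality is the scalar analogue of the paper's matrix inequality; both encode the same Cauchy--Schwarz phenomenon. The paper's argument is shorter and avoids any discussion of spectral densities, while yours makes the analytic content (the relation between $f_{\ov H}$ and $f_H$) explicit and also spells out the final comparison $\Sigma_n(H(\theta))\gtrsim\Sigma_n(H-\delta)$, which the paper leaves implicit. A minor point: with the paper's Toeplitz normalization $T_n(f)_{jk}=\int_{-\pi}^\pi f(\lambda)e^{-i\lambda|j-k|}\,\dd\lambda$, the symbol of $I_n$ is $(2\pi)^{-1}$ rather than $1$, so your discriminant constant should read $b(H)^2f_{\ov H}^2\leq\tfrac{2}{\pi}f_H$; this is harmless since the factors recombine exactly to give $\eta(\theta)=|\Lambda|/\sqrt{\sigma^2\Pi}$ in the end.
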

\begin{proof}[Proof of Lemma \ref{lem:Ah-lb}]
	Since $A(h)$ is a convex combination, it suffices to establish the lower bounds for $A(1)$ and $A(0)$. Note that by definition,
	\begin{align*}  &\sqrt{\Pi\sigma^2}b(H)n^{-2\ov H} \Sigma_n(\ov H)_{jk}\\
		&\quad=   \cov\biggl(\lvert \si\rvert\frac{\Delta^n_j B}{ \Den^{1/2}},\sqrt{\Pi}\frac{\Delta^n_k B(H)}{\Den^H}\biggr)+\cov\biggl(\lvert \si\rvert\frac{\Delta^n_k B}{\Den^{1/2}},\sqrt{\Pi}\frac{\Delta^n_j B(H)}{\Den^H}\biggr),
	\end{align*}
	where $B(H)_t=\int_{-\infty}^t h_H(t,s)\,\dd B_s$. Since $XY^T+YX^T\leq XX^T+YY^T$ for $X,Y\in\R^{n\times n}$, it follows that $\sqrt{\Pi\sigma^2}b(H)n^{-2\ov H} \Sigma_n(\ov H)\leq  \si^2n^{-1}I_n+\Pi n^{-2H}\Sigma_n(H)$ and therefore, by \eqref{eq:sig}, 
	$$\wt\Sig_n(\theta) \geq \biggl(1-\frac{\lvert \La\rvert}{\sqrt{\Pi\sigma^2}}\biggr)(\si^2n^{-1}I_n+\Pi n^{-2H}\Sigma_n(H)).$$
	If we apply this  to $\theta=\theta_0$, we immediately obtain \eqref{eq:Ah} for $A(0)=\wt\Sigma_n(\theta_0)$. To derive the estimate for $A(1)=\wt\Sigma_n(\theta_n)$, we apply the above to $\theta=\theta_n$, which yields
	\begin{align*}
		A(1)\geq \frac12\biggl(1-\frac{\lvert \La\rvert}{\sqrt{\Pi\sigma^2}}\biggr)(\si^2n^{-1}I_n+\Pi n^{-2H}\Sigma_n(H+r_{1,n}))
	\end{align*}
	for all sufficiently large $n$. Since $\La^2<\Pi\si^2$, this implies \eqref{eq:Ah} for $h=1$.
\end{proof}

We proceed to finding an upper bound on $\delta_n$.

\begin{Lemma}\label{lem:Delta-ub}
	For any $\theta_0\in\Theta$ and  $\delta>0$, there exists  $C=C(\theta_0,\delta)>0$ such that $	\delta_n \leq C B_n(\theta_0,\delta)$, where
	\begin{equation*}
		B_n(\theta_0,\delta)  = r_{2,n}n^{-1} I_n + (r_{1,n}+r_{4,n})n^{-2H} \Sigma_n(H+\delta)  + (r_{1,n}+r_{3,n}) n^{-2\overline{H}} \Sigma_n(\overline{H}+\delta).
	\end{equation*}
\end{Lemma}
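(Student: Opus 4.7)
The plan is to decompose $\delta_n=\wt\Sigma_n(\theta_n)-\wt\Sigma_n(\theta_0)$ using \eqref{eq:wtsigma} into the three natural summands corresponding to the three parameter perturbations:
\begin{equation*}
\delta_n = r_{2,n}n^{-1}I_n + \Pi n^{-2H}E_n + \Lambda b(H) n^{-2\ov H}\ov E_n,
\end{equation*}
where $E_n=(1+r_{4,n})\Sigma_n(H+r_{1,n})-\Sigma_n(H)$ and $\ov E_n=(1+r_{3,n})\Sigma_n(\ov H+r_{1,n}/2)-\Sigma_n(\ov H)$. The $r_{2,n}n^{-1}I_n$ term already appears verbatim in $B_n(\theta_0,\delta)$, so the proof reduces to the two Loewner estimates $E_n\leq C(r_{1,n}+r_{4,n})\Sigma_n(H+\delta)$ and $\ov E_n\leq C(r_{1,n}+r_{3,n})\Sigma_n(\ov H+\delta)$, which I handle symmetrically.

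For each of these I would split once more, writing $E_n=r_{4,n}\Sigma_n(H+r_{1,n})+[\Sigma_n(H+r_{1,n})-\Sigma_n(H)]$ and likewise for $\ov E_n$. The first piece is handled by a comparison $\Sigma_n(H')\leq C(H',H'')\Sigma_n(H'')$ for $H'\leq H''$ with $H''-H'$ small, which applies with $H'=H+r_{1,n}$ and $H''=H+\delta$ for all large $n$ since $r_{1,n}\to 0$. The second piece is the crux: by the mean value theorem,
\begin{equation*}
\Sigma_n(H+r_{1,n})-\Sigma_n(H)=r_{1,n}\int_0^1\pd_h\Sigma_n(H+tr_{1,n})\,\dd t,
\end{equation*}
so it suffices to establish the Loewner derivative bound $-C\Sigma_n(h+\delta)\leq\pd_h\Sigma_n(h)\leq C\Sigma_n(h+\delta)$ uniformly for $h$ in a small neighborhood of $H$.

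To obtain this derivative bound, I would exploit the stationarity of fractional Gaussian noise: $\Sigma_n(h)$ is the Toeplitz matrix with spectral density $f_h(\omega)$, so $x^T\Sigma_n(h)x=\int_{-\pi}^{\pi}f_h(\omega)|p_x(\omega)|^2\,\dd\omega$ for every $x\in\R^n$, where $p_x(\omega)=\sum_kx_ke^{ik\omega}$. It is then enough to prove the pointwise inequality $|\pd_h f_h(\omega)|\leq C f_{h+\delta}(\omega)$ on $(-\pi,\pi)\setminus\{0\}$, uniformly in $h$ near $H$. Since $f_h(\omega)\asymp|\omega|^{1-2h}$ as $\omega\to 0$ and is smooth and bounded away from $0$ and $\infty$ on compact subsets of $(0,\pi]$, the derivative $\pd_hf_h(\omega)$ carries only an extra logarithmic factor $\log|\omega|$ near the origin, which is absorbed by the sharper singularity $|\omega|^{1-2(h+\delta)}$ of $f_{h+\delta}$. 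The same argument, with $\ov H$ in place of $H$ and $r_{1,n}/2$ in place of $r_{1,n}$, handles $\ov E_n$, and summing the three pieces yields the claimed Loewner upper bound on $\delta_n$.

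The main obstacle is this spectral density estimate: although $f_h$ has the standard series representation $f_h(\omega)=C_h|2\sin(\omega/2)|^2\sum_{j\in\Z}|\omega+2\pi j|^{-1-2h}$, differentiating termwise in $h$ requires uniform convergence and careful isolation of the $|\omega|^{1-2h}\log|\omega|$-behavior at the origin, together with continuity in $h$ of the normalizing constant $C_h$. Once this pointwise bound is established, reassembling the pieces and bookkeeping the constants (which depend only on $\theta_0$ and $\delta$) is routine.
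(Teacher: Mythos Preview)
Your proposal is correct and follows essentially the same route as the paper. Both arguments write $\delta_n$ via the Toeplitz representation $\Sigma_n(H)=T_n(f_H)$, split it into the three summands you identify, and reduce the Loewner bound to the pointwise spectral estimate $|f_{H+r_{1,n}}(\lambda)-f_H(\lambda)|\lesssim r_{1,n}|\lambda|^{1-2H}\lvert\log|\lambda|\rvert\lesssim r_{1,n}|\lambda|^{1-2(H+\delta)}\asymp r_{1,n}f_{H+\delta}(\lambda)$; the paper states this difference bound directly, whereas you phrase it via the mean value theorem on $h\mapsto f_h$, but the content is identical. One small cosmetic point: after integrating your derivative bound you want the majorant $\Sigma_n(H+\delta)$ with \emph{fixed} $H+\delta$ rather than $\Sigma_n(h+\delta)$, so it is cleaner to prove $|\partial_h f_h(\lambda)|\leq C f_{H+\delta}(\lambda)$ directly for $h$ in a neighborhood of $H$ (equivalently, work with a slightly smaller $\delta'<\delta$ in the intermediate step), which is exactly what your log-absorption argument gives.
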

\begin{proof}[Proof of Lemma \ref{lem:Delta-ub}]
	By \cite[Proposition 7.2.9]{samorodnitsky1994stable}, we have that
	\begin{equation*}
		\Sigma_n(H) = T_n(f_H),
	\end{equation*}
	where 
	\beq\label{eq:spectral} T_n(f)_{j,k}=\int_{-\pi}^\pi f(\lambda) e^{- {i} \lambda \lvert j-k\rvert}\, \dd\lambda\eeq
	and the spectral density $f_H$ is given by
	\begin{equation*}
		f_H(\lambda) 
		=  \frac{\Gamma(2H+1)\sin(\pi H)}{\pi}(1-\cos\lambda) \sum_{k\in\bbz} |\lambda + 2k\pi|^{-2H-1}.
	\end{equation*}
	In particular,  by \eqref{eq:wtsigma}, 
	\begin{multline*}
		\delta_n=\wt{\Sigma}_n(\theta_n) - \wt{\Sigma}_n(\theta) 
		= r_{2,n} n^{-1} I_n 
		+ \Pi n^{-2H}  [(1+r_{4,n})\Sigma_n(H+r_{1,n}) - \Sigma_n(H) ] \\
		+ \Lambda b(H) n^{-2\overline{H}}\, [(1+r_{3,n}) \Sigma_n(\overline{H} + \tfrac{r_{1,n}}{2}) -  \Sigma_n(\overline{H}) ]= T_n(\wt{g}_n),
	\end{multline*}
	where 
	\begin{align*}
		\wt{g}_n 
		&=   r_{2,n} \tfrac{n^{-1}}{2\pi} +  \Pi n^{-2H}[(1+r_{4,n})f_{H+r_{1,n}} -f_{H} ]  	+ \Lambda b(H) n^{-2\overline{H}} [ (1+r_{3,n}) f_{\overline{H} + \frac{r_{1,n}}{2}} - f_{\overline{H}}] \\
		&\leq \mathtoolsset{multlined-width=0.9\displaywidth} \begin{multlined}[t]  C(\theta_0) \Bigl[ r_{2,n} n^{-1} + n^{-2H} r_{4,n} |f_{H+r_{1,n}}| + n^{-2H} |f_{H+r_{1,n}} -f_{H}| \\
			+n^{-2\overline{H}} r_{3,n} |f_{\overline{H} + \frac{r_{1,n}}{2}}| + n^{-2\overline{H}} |f_{\overline{H} + \frac{r_{1,n}}{2}} - f_{\overline{H}}| \Bigr]\end{multlined}
	\end{align*}
	and $C(\theta_0)$ may change its value from line to line.
	For large $n$, we have that
	\begin{align*}
		|f_{H+r_{1,n}}(\lambda)| 
		&\leq C(\theta_0) |\lambda|^{1-2H-2r_{1,n}}
		\leq C(\theta_0) |\lambda|^{1-2H-2\delta}, \\
		|f_{\overline{H} + \frac{r_{1,n}}{2}} (\lambda)|
		&\leq C(\theta_0) |\lambda|^{1-2\overline{H}-r_{1,n}}
		\leq C(\theta_0) |\lambda|^{1-2\overline{H}-2\delta},\\
		|f_{H+r_{1,n}}(\lambda) - f_{H}(\lambda) | 
		&\leq C(\theta_0) r_{1,n} |\lambda|^{1-2H - 2r_{1,n}} \lvert \log{\lvert \la\rvert}\rvert
		\leq C(\theta_0) r_{1,n} |\lambda|^{1-2H - 2\delta} , \\
		|f_{\overline{H} + \frac{r_{1,n}}{2}} (\lambda) - f_{\overline{H}} (\lambda)|
		&\leq C(\theta_0) r_{1,n} |\lambda|^{1-2\overline{H} - r_{1,n}}\lvert \log{\lvert \la\rvert}\rvert
		\leq C(\theta_0) r_{1,n} |\lambda|^{1-2\overline{H} - 2\delta}
	\end{align*}
	for all $\lambda\in[-\pi,\pi]$.
	Hence, $\wt{g}_n(\lambda) \leq C(\theta_0)\widehat{g}_n(\lambda)$ for all $\lambda\in[-\pi,\pi]$, where
	\begin{align*}
		\wt{g}_n\leq C(\theta_0)\Bigl[ r_{2,n}\tfrac{n^{-1}}{2\pi} + (r_{1,n}+r_{4,n}) n^{-2H} f_{H+\delta} + (r_{1,n} + r_{3,n})n^{-2\overline{H}} f_{\overline{H}+\delta}\Bigr],
	\end{align*}
	which yields the claim.
\end{proof}

\begin{Lemma}\label{lem:trace-limit}
	Let $g\colon[-\pi,\pi]\to \R$ be a symmetric function such that $g(\lambda) =  {O}(|\lambda|^{-\beta})$ as $\la\to0$ for some $\beta\in[0,1)$.
	Then, for all $\delta>0$ and  $H\in(0,1)$,
	\begin{align*}
		\tr( T_n(g) T_n(g) ) &= {O} (  n \vee  n^{2\beta + \delta}  ),\\
		\tr\bigl( \Sigma_n(H)^{-1} T_n(g) \Sigma_n(H)^{-1} T_n(g) \bigr)
		& = {O} ( n\vee n^{2(\beta-2H+1) + \delta}   ).
	\end{align*}
\end{Lemma}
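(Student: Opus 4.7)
The plan is to view both traces as Hilbert--Schmidt-type norms of Toeplitz matrices with symbols singular at the origin, and to reduce the second bound to the first via the observation that $\Sigma_n(H)^{-1}=T_n(f_H)^{-1}$ has, at leading order, symbol $1/f_H(\lambda)\sim C|\lambda|^{2H-1}$. Consequently the effective symbol of $\Sigma_n(H)^{-1/2}T_n(g)\Sigma_n(H)^{-1/2}$ has singularity of order $\beta-2H+1$ rather than $\beta$, which is precisely the jump from $n^{2\beta+\delta}$ to $n^{2(\beta-2H+1)+\delta}$ between the two bounds.

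For the first estimate, I would start from the Fejér-kernel identity
\[\tr(T_n(g)T_n(g))=\frac{1}{2\pi}\iint_{[-\pi,\pi]^2} g(\lambda)g(\mu)F_n(\lambda-\mu)\,\dd\lambda\,\dd\mu,\]
where $F_n(\alpha)=\sin^2(n\alpha/2)/\sin^2(\alpha/2)$ satisfies $\int F_n=2\pi n$ and the pointwise estimate $F_n(\alpha)\lec \min(n^2,\alpha^{-2})$. For $\beta<\tfrac12$, AM--GM and $g\in L^2$ yield $\tr(T_n(g)^2)\lec n\|g\|_{L^2}^2=O(n)$. For $\beta\geq \tfrac12$, I would split $g=g^-+g^+$ at the threshold $|\lambda|=1/n$: the bulk piece $g^+$ has $\|g^+\|_{L^2}^2\lec n^{2\beta-1}$ and contributes $O(n^{2\beta})$, while the singular piece $g^-$ has $\|g^-\|_{L^1}\lec n^{\beta-1}$, so the bound $F_n\leq n^2$ gives $n^2\|g^-\|_{L^1}^2\lec n^{2\beta}$. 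The cross term is handled by Cauchy--Schwarz, and any logarithmic factor at the boundary value $\beta=\tfrac12$ is absorbed into the $\delta$-loss. For the second estimate, the cyclic identity
\[\tr(\Sigma_n^{-1}T_n(g)\Sigma_n^{-1}T_n(g))=\|\Sigma_n^{-1/2}T_n(g)\Sigma_n^{-1/2}\|_{\mathrm{HS}}^2\]
together with a Szegő--Widom-type approximation $\Sigma_n(H)^{-1/2}T_n(g)\Sigma_n(H)^{-1/2}\approx T_n(g/f_H)$ reduces the task to applying the first estimate with the effective exponent $\beta_{\mathrm{eff}}=\beta-2H+1$, yielding the claimed bound.

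The main obstacle is making the symbolic reduction rigorous for the unbounded symbol $1/f_H$: the classical Szegő--Widom trace-class identities require bounded, bounded-below symbols, which fails both for $H>\tfrac12$ (where $f_H(0)=\infty$) and for $H<\tfrac12$ (where $f_H(0)=0$). I would circumvent this by mollifying $f_H$ on the scale $1/n$, applying the classical calculus to the mollified operators, and controlling the Hankel-norm remainders via the eigenvalue asymptotics $\lambda_k(\Sigma_n(H))\asymp (k/n)^{1-2H}$ for $H<\tfrac12$ and $\lambda_k(\Sigma_n(H))\asymp 1$ for $H>\tfrac12$. The resulting logarithmic factors in the remainder are absorbed into the universal $\delta$-loss. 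As a sanity check, on the extremal case $g\equiv 1$ the trace reduces to $\tr(\Sigma_n(H)^{-2})=\sum_k \lambda_k(\Sigma_n(H))^{-2}\asymp n\vee n^{2-4H}$, matching the claim at $\beta=0$.
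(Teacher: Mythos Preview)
The paper does not prove this lemma from first principles: its entire proof is a one-line citation of Theorem~5 in Lieberman, Rosemarin and Rousseau (2012), a general result on trace asymptotics for products of Toeplitz matrices and their inverses when the symbols have power-type singularities at the origin. Your proposal is therefore, in effect, an attempt to reprove (the relevant special case of) that theorem by hand.

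Your treatment of the first bound via the Fej\'er-kernel identity and a splitting at scale $|\lambda|=1/n$ is correct and essentially complete; this is a standard and clean argument.

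The second bound, however, has a genuine gap. Your plan is to justify the approximation $\Sigma_n(H)^{-1/2}T_n(g)\Sigma_n(H)^{-1/2}\approx T_n(g/f_H)$ and then feed $g/f_H$ back into the first bound with effective exponent $\beta_{\mathrm{eff}}=\beta-2H+1$. But the first bound was derived under $\beta<1$, which is what guarantees $g\in L^1$ and hence that $T_n(g)$ is defined; nothing in the hypotheses of the lemma prevents $\beta_{\mathrm{eff}}\geq 1$ (take any $H<\tfrac12$ and $\beta>2H$), and then $g/f_H\notin L^1$, so the operator you want to reduce to does not exist. This range is not academic: in the paper's own application (proof of Proposition~\ref{lem:KL-complete}) the lemma is invoked with $g\equiv 1$ and with $H$ replaced by $H-\delta$ for $H\in(0,\tfrac12)$, giving $\beta_{\mathrm{eff}}=1-2H+2\delta$, which exceeds $1$ as soon as $H<\delta$. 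Your sanity check with $g\equiv1$ succeeds precisely because you \emph{abandon} the symbolic reduction there and sum $\lambda_k(\Sigma_n(H))^{-2}$ directly; that computation does not validate the reduction for general $g$. The mollification-plus-Hankel-remainder sketch does not close the gap either, because the obstruction is not merely that $1/f_H$ is unbounded but that the product $g/f_H$ can fail to be integrable, placing you outside the scope of the Szeg\H{o}--Widom calculus altogether. The cited theorem avoids the problem by never forming $T_n(g/f_H)$: it bounds the trace of the four-factor product $T_n(f_H)^{-1}T_n(g)T_n(f_H)^{-1}T_n(g)$ directly, which is a substantially more delicate argument than what you outline.
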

\begin{proof}[Proof of Lemma \ref{lem:trace-limit}] We apply \cite[Theorem 5 in the full version]{Lieberman2012a} to 
	the spectral densities $f=f_H$ and $g$ with $\alpha = \alpha(\theta)= 2H-1$ and $\beta$ as above. 
\end{proof}

\begin{proof}[Proof of Proposition \ref{lem:KL-complete}]
	By \eqref{eqn:kl-trace} and Lemmas \ref{lem:Ah-lb} and \ref{lem:Delta-ub}, we obtain for   $\delta>0$,
	\begin{align*}
		\mathrm{KL} (\theta_n \dmid \theta_0) \leq  \begin{cases}C(\theta_0)
			n^2 \tr( B_n B_n ) & \text{if } H>\frac{1}{2}, \\
			C(\theta_0)	n^{4H} \tr\bigl( \Sigma_n(H-\delta)^{-1}B_n \Sigma_n(H-\delta)^{-1} B_n  \bigr) & \text{if } H<\frac{1}{2},
		\end{cases}
	\end{align*}
	where $B_n=B_n(\theta_0, \delta)$ as in Lemma \ref{lem:Delta-ub}.
	
	If $H>\frac{1}{2}$, the Cauchy--Schwarz inequality yields
	\begin{align*}
		\tr(B_n B_n) 
		&\leq 3 r_{2,n}^2 n^{-2} \tr ( I_n^2 )  + 3 (r_{1,n}+r_{4,n})^2 n^{-4H} \tr(\Sigma_n(H+\delta)^2 ) \\
		&\quad+ 3 (r_{1,n}+r_{3,n})^2 n^{-4\overline{H}} \tr( \Sigma_n(\overline{H}+\delta)^2 ).
	\end{align*}
	Clearly, $\tr(I_n^2)= n$. Moreover, since
	$\Sigma_n(\overline{H}+\delta)$ and $\Sigma_n(H+\delta)$ satisfy the conditions of Lemma \ref{lem:trace-limit} with $\beta = 2\overline{H}+2\delta-1$ and $\beta = 2H+2\delta-1$, respectively, we have that  
	\begin{equation*}
		\tr( \Sigma_n(\overline{H}+\delta)^2 )
		=  {O} (  n\vee n^{4\overline{H}+5\delta -2}) ,\qquad
		\tr( \Sigma_n(H+\delta)^2 )
		=  {O} (  n\vee n^{4H+5\delta -2}) .
	\end{equation*}
	Since $\delta>0$ was arbitrary, we find that $\tr(B_n B_n) =  {O}(z_n)$, where
	\begin{equation*}
		z_n 
		= r_{2,n}^2 n^{-1} + (r_{1,n}+r_{4,n})^2   (n^{1-4H} \vee  n^{\delta-2}  )  + (r_{1,n}+r_{3,n})^2   ( n^{1-4\overline{H}}\vee n^{\delta-2}  ).
	\end{equation*}
	This establishes the  KL upper bound for the cases $H\in(\frac{1}{2}, \frac{3}{4})$ and $H\in[\frac{3}{4}, 1)$.
	
	If $H<\frac{1}{2}$,
	we may again apply Lemma \ref{lem:trace-limit} to find that
	\begin{align*}
		\tr\bigl[  (\Sigma_n(H-\delta)^{-1} I_n  )^2 \bigr] 
		&=  {O} (  n\vee n^{2-4H+5\delta}), \\
		\tr\bigl[  ( \Sigma_n(H-\delta)^{-1} \Sigma_n(\overline{H}+\delta)  )^2 \bigr] 
		&=  {O} (  n\vee  n^{4(\overline{H}-H) +5\delta } ), \\
		\tr\bigl[  ( \Sigma_n(H-\delta)^{-1} \Sigma_n(H+\delta)  )^2 \bigr] 
		&=  {O} (  n\vee n^{5\delta}).
	\end{align*}
	Since $\delta>0$ was arbitrary, we find that $\tr[ \Sigma_n(H-\delta)^{-1}B_n \Sigma_n(H-\delta)^{-1} B_n  ] = {O}(w_n)$, where
	\begin{align*}
		w_n 
		&= r_{2,n}^2   (n^{-1}\vee n^{-4H+\delta} )  + (r_{1,n}+r_{3,n})^2   ( n^{1-4\overline{H}}\vee n^{-4H+\delta}  ) \\
		&\quad	+ (r_{1,n}+r_{4,n})^2   ( n^{1-4H}\vee n^{ - 4H+\delta}  ).
	\end{align*}
	This yields the KL upper bound for the remaining cases $H\in(\frac{1}{4},\frac{1}{2})$ and $H\in(0,\frac{1}{4}]$.
\end{proof}

\begin{appendix}

\section{Central limit theorem for general variation functionals}\label{app:CLT}

In this section, we state and prove a CLT for general variation functionals of multivariate mixed semimartingale processes. To this end, we consider a $d$-dimensional mixed semimartingale of the form
\begin{equation} \label{mix:SM:mod}
	Y_t = Y_0+\int_0^t a_s\,\dd s + \int_{0}^{t} \si_s \, \dd B_s + \int_{0}^{t} g(t-s) \rho_s \, \dd B_s,
\end{equation}
where now $B$ is a $d'$-dimensional Brownian motion, $a$ (resp., $\si$ and $\rho$) is   $\R^d$-valued (resp., $\bbr^{d \times d'}$-valued) and predictable and  $g \colon \bbr \lra \bbr$ is given by \eqref{kernel:g}. Since $d'$ may be larger than $d$, \eqref{mix:SM:mod} includes the case where the martingale and the fractional part of $Y$ are driven by correlated (and not necessarily identical) Brownian motions. In fact, 
the situation considered in Section~\ref{sec:CLT}  can be embedded into the current setting by defining $d'=2$ and  
\beq\label{eq:spec}
B^{\text{\eqref{mix:SM:mod}}}=(B,B'),\quad	\si^{\text{\eqref{mix:SM:mod}}} = (\si,0),\quad 	\rho^{\text{\eqref{mix:SM:mod}}} = (\rho,\rho')
\eeq
(the superscript stands for ``from Equation~\eqref{mix:SM:mod}''). In particular, Theorem~\ref{cor}  then becomes a special case of Theorems~\ref{thm:CLT:mixedSM} and \ref{thm:CLT2} below.

Let $f \colon \bbr^{d \times L} \lra \bbr^M$ for some $L, M \in \bbn$. For $Y$ and similarly for other $d$-dimensional processes, we define
\begin{equation} \label{not:tensor}
	\Delta_i^n  Y =  Y_{i \Delta_n}-  Y_{(i-1) \Delta_n} \in \bbr^d,\qquad
	\un{\Delta}^n_i  Y = (\Delta_i^n  Y, \Delta_{i+1}^n  Y, \ldots, \Delta_{i+L-1}^n  Y) \in \bbr^{d \times L}.
\end{equation}
Our goal is to formulate and prove a CLT for normalized variation functionals of the form
\beq\label{eq:Vnf} V^n_f(Y,t)=\Den\sum_{i=1}^{[t/\Den]-L+1} f\biggl(\frac{\un\Delta^n_i Y}{\Den^{H\wedge (1/2)}}\biggr),\qquad t\geq0,\eeq
where $[\cdot]$ denotes the floor function.
In what follows, $\Vert \cdot \Vert$ denotes the Euclidean norm in $\bbr^n$ for any $n \in \bbn$. Also, if $z$ is some matrix in $\bbr^{n \times m}$ for any $n,m \in \bbn$, then $\Vert z \Vert$ is defined by viewing $z$ as a vector in $\bbr^{nm}$. We introduce the following assumptions. 

\settheoremtag{(CLT')}
\begin{Assumption} \label{Ass:B} Consider the process $Y$ from \eqref{mix:SM:mod} and recall $N(H)$ from \eqref{eq:NH}.
	We make the following assumptions:
	\begin{enumerate}
		\item
		The function $f \colon \bbr^{d \times L} \lra \bbr^M$ is even (i.e., satisfies $f(-x)=f(x)$ for all $x$) and belongs to $C^{2N(H)+1}(\bbr^{d \times L}, \bbr^M)$, with all partial derivatives  up to order $2N(H)+1$ (including $f$ itself) being of polynomial growth. 
		\item  The kernel $g$ is of the form \eqref{kernel:g} where $H\in (0,1)\setminus\{\frac12\}$ and $g_0\in C^1(\bbr)$ satisfies $g_0(x)=0$ for all $x\leq 0$. 
		\item  The drift process $a$ is $d$-dimensional, locally bounded, $\F$-adapted and càdlàg. Moreover, $B$ is a $d'$-dimensional standard $\F$-Brownian motion.
		\item If $H>\frac12$, the volatility process $\si$ takes the form
		\begin{equation} \label{repr:si}
			\si_t = \si^{(0)}_t + \int_{0}^{t} \wti{\si}_s \, \dd B_s, \qquad t \geq 0,
		\end{equation}
		where
		\begin{enumerate}
			\item $\si^{(0)}$ is an $\F$-adapted locally bounded $\bbr^{d \times d'}$-valued process such that for all $T>0$, there are  $\ga \in  ( \frac{1}{2}, 1  ]$ and $K_1\in(0,\infty)$ with
			\begin{equation} \label{mom:ass:si:rho:1}
				\bbe  [ 1\wedge  \Vert \si_t^{(0)} - \si_s^{(0)}   \Vert ]  \leq K_1 \vert t - s \vert^{\ga},\qquad s,t\in[0,T];
			\end{equation}
			\item  $\wti{\si}$ is an $\F$-adapted locally bounded $\bbr^{d \times d' \times d'}$-valued process such that for all $T>0$, there are  $\eps \in(0,1)$ and $K_2\in(0,\infty)$ with
			\begin{equation} \label{reg:ass:si:ti}
				\bbe  [ 1\wedge \Vert \wti{\si}_t  - \wti{\si}_s  \Vert    ]  \leq K_2 \vert t -s \vert^{\eps}, \qquad s,t\in[0,T].
			\end{equation}
		\end{enumerate}	
		If $H<\frac12$, we have \eqref{repr:si} but with $\si$, $\si^{(0)}$ and $\wt \si$ replaced by  $\rho$ and some processes $\rho^{(0)}$ and $\wt\rho$ satisfying   conditions analogous to \eqref{mom:ass:si:rho:1} and \eqref{reg:ass:si:ti}.
		\item  If $H>\frac12$, the process $\rho$ is   $\F$-adapted, locally bounded and $\bbr^{d \times d'}$-valued. Moreover, for all $T>0$, there is $K_3\in(0,\infty)$ such that
		\begin{equation} \label{reg:cond:rho:B3}
			\bbe  [   1\wedge \Vert \rho_t - \rho_s   \Vert  ]  \leq K_3 \vert t -s \vert^{\frac{1}{2}}, \qquad s,t\in[0,T].
		\end{equation}
		If $H<\frac12$, we have the same condition but with $\rho$ replaced by $\si$.
	\end{enumerate}
\end{Assumption}

To state the CLT, we need more additional notation.
For  suitable $v=(v_{k \ell, k' \ell'})_{k, k',\ell,\ell' = 1}^{d,d,L,L},q=(q_{k \ell, k' \ell'})_{k, k',\ell,\ell' = 1}^{d,d,L,L} \in\R^{(d\times L)\times(d\times L)}$, define
\begin{equation} \label{def:uRhof1f2}\begin{split}
		\mu_f(v)&=(\E[f_{m}(Z)])_{m=1}^M \in\R^M,\\	\ga_f(v,q) &= (\cov ( f_{m}(Z), f_{m'}(Z')  ))_{m,m'=1}^M \in\R^{M\times M}, \quad\ga_f(v) =\ga_f(v,v),  \end{split}
\end{equation}
where $(Z,Z')\in (\R^{d\times L})^2$ is multivariate normal  with mean $0$ and $\cov(Z_{k \ell},Z_{k' \ell'})=\cov(Z'_{k \ell},Z'_{k' \ell'})=v_{k \ell, k' \ell'}$ and $\cov(Z_{k \ell},Z'_{k' \ell'})=q_{k\ell,k'\ell'}$. Furthermore, given a multi-index $\chi = (\chi_{k \ell, k' \ell'})_{k, k',\ell,\ell' = 1}^{d,d,L,L} \in 
\bbn_0^{(d\times L)\times(d\times L)}$, we define
\begin{equation} \label{mult:ind:1}
	\vert \chi \vert = \sum_{k,k' = 1}^d \sum_{\ell,\ell' = 1}^{L} \chi_{k\ell,k\ell'}, \quad \chi! = \prod_{k,k' = 1}^d \prod_{\ell,\ell' = 1}^{L} \chi_{k\ell,k'\ell'}!,\quad
	v^{\chi} = \prod_{k,k' = 1}^d \prod_{\ell,\ell' = 1}^{L} v_{k\ell,k'\ell'}^{\chi_{k\ell, k' \ell'}}
\end{equation}
and the partial derivatives 
\begin{equation} \label{mult:ind:2}
	\pd^{\chi} \mu_f(v)  = 
	\frac{\pd^{\vert \chi \vert} \mu_f}{\pd v_{11,11}^{{\chi}_{11,11}} \cdots \pd v_{dL,dL}^{{\chi}_{dL,dL}}}(v)\in\R^M.
\end{equation}
For $s\geq0$, we also define 
\begin{equation} \label{mat:lim:thms}
	\begin{split}
		c(s)_{k \ell, k' \ell'} & = (\si_s \si_s^T)_{k k'} \ind_{\{\ell= \ell'\}},  \\
		\pi_r(s)_{k \ell, k' \ell'}  &= (\rho_s \rho_s^T)_{k k'} \Ga^H_{\vert \ell - \ell'+r \vert},\quad \pi(s)_{k \ell, k' \ell'} =\pi_0(s)_{k \ell, k' \ell'},\\
		\la(s)_{k \ell, k' \ell'}  &= 2^{-\bone_{\{\ell=\ell'\}}} (\si_s   \rho_s^T\bone_{\{\ell\leq \ell'\}}+  \rho_s\si_s^T\bone_{\{\ell\geq \ell'\}})_{k k'} \Phi^H_{\vert \ell - \ell' \vert}
	\end{split}
\end{equation}
for all $k, k' \in \{1, \ldots, d\}$ and $\ell,\ell' \in \{1, \ldots, L\}$, where $\Ga^H_r$ and $\La^H_r$ are defined in \eqref{num:Ga} and \eqref{nu:La}, respectively.

\begin{Theorem} \label{thm:CLT:mixedSM}
	If Assumption~\ref{Ass:B} holds with $H>\frac12$, then
	\begin{equation} \label{CLT:1}
		\Delta_n^{- \frac{1}{2}} \Bigl\{ V^n_f(Y,t)
		- V_f(Y,t)- \cala^{\prime n}_t \Bigr \} 
		\stackrel{\mathrm{st}}{\Longrightarrow} \mathcal{Z}',
	\end{equation}
	where $
	V_f(Y,t)=	\int_{0}^{t} \mu_f (c(s)) \, \dd s $
	and
	\begin{equation}\label{eq:Ant}
		\cala^{\prime n}_t=  
		\sum_{j=1}^{N(H)} 
		\sum_{\lvert \chi \rvert = j} \frac{1}{\chi!} 
		\int_{0}^{t} \pd^{\chi} \mu_f (c(s)) \Bigl( \Den^{H-\frac12} \la(s) 
		+ \Den^{2H-1} \pi(s) \Bigr)^{\chi} \, \dd s 
	\end{equation}
	and
	$\mathcal{Z}' = (\mathcal{Z}'_t)_{t \geq 0}$ is an $\bbr^M$-valued process, defined on a very good filtered extension $(\ov{\Om}, \ov{\mathcal{F}}, (\ov{\mathcal{F}}_t)_{t \geq 0}, \ov{\mathbb{P}})$ of the original probability space $(\Om, \mathcal{F}, (\mathcal{F}_t)_{t \geq 0}, \mathbb{P})$  that conditionally on  $\mathcal{F}$ is a centered Gaussian process with independent increments and  covariance function  
	\begin{equation} \label{cov:fct:lim:CLT}
		\calc'_t = (\ov{\bbe}[\calz^{\prime m}_t \calz^{\prime m'}_t \mid \calf])_{m,m'=1}^M	  =\int_{0}^{t}  \ga_f (c(s)) \, \dd s.
	\end{equation}
\end{Theorem}

\begin{Theorem}\label{thm:CLT2}
	If Assumption~\ref{Ass:B} holds with $H<\frac12$, then
	\begin{equation} \label{CLT:2}
		\Delta_n^{- \frac{1}{2}} \biggl\{ V^n_f(Y,t)
		- \int_{0}^{t} \mu_f (\pi(s)) \, \dd s - \cala^{n}_t \biggr \} 
		\stackrel{\mathrm{st}}{\Longrightarrow} \mathcal{Z},
	\end{equation}
	where
	\begin{equation}\label{eq:Ant-2}
		\cala^{n}_t=  
		\sum_{j=1}^{N(H)} 
		\sum_{\lvert \chi \rvert = j} \frac{1}{\chi!} 
		\int_{0}^{t} \pd^{\chi} \mu_f (\pi(s)) \Bigl( \Den^{\frac12-H} \la(s) 
		+ \Den^{1-2H} c(s) \Bigr)^{\chi} \, \dd s 
	\end{equation}
	and
	$\mathcal{Z} = (\mathcal{Z}_t)_{t \geq 0}$ is an $\bbr^M$-valued process  defined on $(\ov{\Om}, \ov{\mathcal{F}}, (\ov{\mathcal{F}}_t)_{t \geq 0}, \ov{\mathbb{P}})$ that conditionally on  $\mathcal{F}$ is a centered Gaussian process with independent increments and  covariance function  
	\begin{equation} \label{cov:fct:lim:CLT-2}\begin{split}
			\calc_t &= (\ov{\bbe}[\calz^{m}_t \calz^{m'}_t \mid \calf])_{m,m'=1}^M\\	 
			& =\int_{0}^{t}  \biggl\{\ga_f (c(s))+\sum_{r=1}^\infty (\ga_f(\pi(s),\pi_r(s)) + \ga_f(\pi(s),\pi_r(s))^T)\biggr\} \, \dd s.\end{split}
	\end{equation}
\end{Theorem}

The proof of the two results is given in Appendices \ref{sec:proof}--\ref{app:B}. 
Apart from the term $\cala^{\prime n}_t$,  Theorem~\ref{thm:CLT:mixedSM} is exactly the CLT of semimartingale variation functionals (see \cite{Jacod12}). Similarly, if we ignore $\cala^n_t$, Theorem~\ref{thm:CLT2} is the CLT for variation functionals of a fractional process with roughness parameter $H$ (cf.\ \cite{BN11,Corcuera13}). The two processes $\cala^{\prime n}_t$ and $\cala^{n}_t$ respectively play the role of higher-order bias terms (for the estimation of the integrated volatility or noise volatility functional). However,  these processes are also key to  identifying  all other quantities of interest in $Y$. If $\la\equiv0$ and $H<\frac12$, Theorem~\ref{thm:CLT2} reduces to \cite[Theorem~3.1]{Chong21}. If $\la \not\equiv0$, additional terms involving $\la$ appear in both $\cala^{\prime n}_t$ and $\cala^{n}_t$.

\end{appendix}

\section*{Acknowledgments}

We   would like to thank the Associate Editor and four referees for their careful reading of the paper. Their constructive comments have  led to significant improvements of the paper. CC is partially supported by ECS project 26301724.
TD is partially supported by the DFG, project number KL 1041/7-2.

\bibliographystyle{abbrv}
\bibliography{biblio}

\newpage

\begin{appendix}
	
	\setcounter{section}{1}

\section{Proof of Theorem~\ref{thm:CLT:mixedSM}}\label{sec:proof}
Except for Proposition~\ref{prop:CLT:1} below, we may and will assume that $M = 1$. Also, by standard localization results and since $g_0$ only contributes a  finite variation process with locally bounded density to $Y$, there is no loss of generality if we replace Assumption~\ref{Ass:B} by
\settheoremtag{(CLT'')}
\begin{Assumption} \label{Ass:Bprime} We have Assumption~\ref{Ass:B} with  $g_0\equiv0$. Moreover,  
	$$ \sup_{(\om,t)\in\Om\times[0,\infty)} \bigg\{ \lVert a_t(\om)\rVert +  \lVert \si_t(\om)\rVert + \lVert \rho_t(\om)\rVert + \lVert \si^{(0)}_t(\om)\rVert +  \lVert \wt \si_t(\om)\rVert  \bigg\}<\infty  $$
	and for every $p>0$, there is $K_p>0$ such that for all $s,t>0$,
	\beq\label{eq:Hoelder}\begin{split} \E[ \lVert \rho_t-\rho_s\rVert^p ]^{\frac 1p}&\leq K_p\lvert t-s\rvert^{\frac12},\qquad  \E[ \lVert \si^{(0)}_t-\si^{(0)}_s\rVert^p ]^{\frac 1p}\leq K_p\lvert t-s\rvert^{\ga},\\
		\E[ \lVert \wt\si_t-\wt\si_s\rVert^p ]^{\frac 1p}&\leq K_p\lvert t-s\rvert^{\eps}. \end{split}\eeq
\end{Assumption}

Under these assumptions, $g(t)=K_H^{-1} t^{H-1/2}\bone_{\{t>0\}}$ and $Y_t=X_0+A_t+X_t+Z_t$, where
\begin{equation} \label{dec:X:H:2}
	A_t = \int_{0}^{t} a_s \, \dd s, \qquad  
	X_t = \int_{0}^{t} \si_s \, \dd B_s, \qquad  Z_t = K_H^{-1} \int_{0}^{t} (t-s)^{H - \frac{1}{2}} \rho_s \, \dd B_s.
\end{equation}
Furthermore, we have $ \un{\Delta}^n_i  Y = \un\Delta^n_i A+\un{\Delta}^n_i X + \un{\Delta}^n_i Z$. For all $s \geq 0$ and $i, n \in \bbn$, define
\begin{equation} \label{def:kernel}
	\begin{split}
		\Delta^n_i g (s) & = g(i \Delta_n-s) - g((i - 1) \Delta_n-s) 
		\\ \uDeni g(s) & =  ( \Deni g (s), \Delta^n_{i + 1} g (s), \ldots, \Delta^n_{i + L - 1} g (s)  ),
	\end{split}
\end{equation}
such that we can rewrite 
\begin{equation}\label{eq:DeltaZ}
	\begin{split}
		\un{\Delta}^n_i Z & = \biggl(\int_0^{\infty} \Deni g(s) \rho_s \, \dd B_s, \int_0^{\infty} \Delta^n_{i + 1} g (s) \rho_s \, \dd B_s, \ldots, 
		\int_0^{\infty} \Delta^n_{i + L - 1} g (s) \rho_s \, \dd B_s \biggr) \\ & 
		= \int_0^{\infty} \rho_s \, \dd B_s \, \uDeni g(s)
	\end{split}
\end{equation}
in matrix notation. 
The proof of Theorem~\ref{thm:CLT:mixedSM} is  divided into  four steps.

\subsubsection*{Step 1: Truncation of fractional increments}
While increments of $X$ are given by stochastic integrals over disjoint intervals, \eqref{eq:DeltaZ} shows that all increments of $Z$ overlap. However, it can be shown that only the portion of the integral  closest to $i\Delta_n$ is asymptotically relevant.
\begin{Proposition}\label{prop:1} Under Assumption~\ref{Ass:Bprime}, if $\theta_n = [\Delta_n^{-\theta}]$ where $\theta$ is chosen such that
	\begin{equation} \label{theta:value}
		1 - \frac{1}{4 - 4H} = \frac{\frac{3}{2} - 2H}{2 - 2H} < \theta < \frac{1}{2}
	\end{equation}
	and if we define
	\begin{equation} \label{1st:trunc:CLT}
		\un{\Delta}^n_i Y^{\mathrm{tr}}= \un\Delta^n_i A+ \un{\Delta}^n_i X+ \int_{(i - \theta_n)\Delta_n}^{(i + L - 1) \Delta_n} \rho_s \, \dd B_s \, \uDeni g(s),
	\end{equation}
	then
	\beq\label{eq:step1} \Delta_n^{-\frac12}\biggl(V^n_f(Y,t)- 	\Delta_n \sum_{i = \theta_n + 1}^{[t/\Delta_n]-L+1} f \biggl( \frac{\un{\Delta}_i^n Y^{\mathrm{tr}}}{\sqrt{\Delta_n}} \biggr)\biggr)\stackrel{L^1}{\Longrightarrow}  0.\eeq
\end{Proposition}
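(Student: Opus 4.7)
The plan is to estimate the truncation error $E_i := \un{\Delta}^n_i Y - \un{\Delta}^n_i Y^{\mathrm{tr}} = \int_0^{(i-\theta_n)\Den}\rho_s\,\dd B_s\,\uDeni g(s)$ and to show that after normalization by $\Den^{-1/2}$, its cumulative impact on $V^n_f$ vanishes in $L^1$, locally uniformly in $t$. The first step is a sharp kernel estimate: since $g(u)=K_H^{-1}u^{H-1/2}\bone_{\{u>0\}}$ is $C^1$ on $(0,\infty)$ with $|g'(u)|\lesssim u^{H-3/2}$, the mean value theorem yields $|\Delta^n_{i+\ell}g(s)|\lesssim \Den(i\Den-s)^{H-3/2}$ for all $s\le (i-\theta_n)\Den$ and $\ell=0,\ldots,L-1$. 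Combined with Burkholder--Davis--Gundy and the boundedness of $\rho$ under Assumption~\ref{Ass:Bprime}, this yields $\lVert E_i\rVert_{L^p}\lesssim \Den^{H}\theta_n^{H-1}=\Den^{H+\theta(1-H)}$ for every $p\ge 2$.

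I would then split the difference in \eqref{eq:step1} into a boundary part ($i\le \theta_n$) and a bulk part. The boundary is handled by polynomial growth of $f$ together with the uniform moment bound $\lVert \un{\Delta}_i^n Y/\sqrt{\Den}\rVert_{L^p}=O(1)$, which holds for $H>\tfrac12$ because $\un{\Delta}_i^n Z/\sqrt{\Den}=O_{L^p}(\Den^{H-1/2})$; this produces an $L^1$ contribution of order $\Den^{1/2-\theta}=o(1)$. For the bulk, set $\delta_i:=E_i/\sqrt{\Den}$ and $y_i:=\un{\Delta}_i^n Y^{\mathrm{tr}}/\sqrt{\Den}$, and Taylor-expand $f(y_i+\delta_i)-f(y_i)=\nabla f(y_i)\cdot\delta_i+\tfrac12 \delta_i^\top\nabla^2 f(y_i)\delta_i + R_i$ with $|R_i|\lesssim (1+\lVert y_i\rVert^p+\lVert \delta_i\rVert^p)\lVert \delta_i\rVert^3$. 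The quadratic term and the remainder admit direct absolute-value bounds of order $\Den^{2H-3/2+2\theta(1-H)}$ and $\Den^{3H-2+3\theta(1-H)}$, respectively, both vanishing precisely under the lower bound $\theta>(3/2-2H)/(2-2H)$ in \eqref{theta:value}.

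The hard part will be the first-order term $\sqrt{\Den}\sum_i \nabla f(y_i)\cdot \delta_i$, where a direct modulus bound of the form $\sqrt{\Den}\sum_i \lVert \nabla f(y_i)\rVert\lVert\delta_i\rVert$ only gives $O(\Den^{H-1+\theta(1-H)})$ and is insufficient for any admissible $\theta<\tfrac12$. The crucial observation is that $\delta_i$ is $\calf_{(i-\theta_n)\Den}$-measurable, whereas conditionally on that $\sigma$-algebra $y_i$ is, to leading order, a centered Gaussian vector: the locally Gaussian martingale part $\un{\Delta}_i^n X/\sqrt{\Den}$ dominates the small corrections coming from the drift $A$, from fluctuations of $\si$ and $\rho$, and from the truncated fractional component with $\rho$ frozen at $(i-\theta_n)\Den$. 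Since $f$ is even, $\nabla f$ is odd, so a Taylor expansion of $\nabla f$ around this conditionally centered Gaussian shows that $\bbe[\nabla f(y_i)\mid\calf_{(i-\theta_n)\Den}]$ is of strictly smaller order than $\nabla f(y_i)$ itself. Decomposing $\nabla f(y_i)=M_i+\bbe[\nabla f(y_i)\mid\calf_{(i-\theta_n)\Den}]$ with $M_i$ the conditionally centered residual, the sum $\sqrt{\Den}\sum_i M_i\delta_i$ is controlled in $L^2$ by noting that cross-terms with $|i-j|>\theta_n+L$ vanish (because $\bbe[M_j\mid\calf_{(j-\theta_n)\Den}]=0$ and $\calf_{(j-\theta_n)\Den}$ contains $M_i$, $\delta_i$, $\delta_j$), leaving only diagonal and near-diagonal contributions bounded by $\theta_n\lVert \delta_i\rVert_{L^2}^2=O(\Den^{(2H-1)(1-\theta)})$; the conditional-expectation piece contributes a comparable remainder by the same bias-reduction argument. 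Locally uniform convergence in $t$ then follows from monotonicity of the absolute bounds together with Doob's inequality applied to the martingale-like partial sums.
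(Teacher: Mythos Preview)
Your overall strategy matches the paper's: split off the boundary $i\le \theta_n$, control the bulk via the truncation error $\delta_i=E_i/\sqrt{\Den}$, and exploit the evenness of $f$ to kill the dangerous first-order term through a conditional-Gaussian symmetry argument combined with a martingale-type covariance bound. The kernel estimate, the boundary bound $\Den^{1/2-\theta}$, the quadratic bound $\Den^{2H-3/2+2\theta(1-H)}$, and the martingale-difference control of $\sum M_i\delta_i$ are all correct and coincide with the paper's estimates.

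There is, however, a genuine gap in your treatment of the conditional-expectation piece $\sqrt{\Den}\sum_i \bbe[\nabla f(y_i)\mid\calf^n_{i-\theta_n}]\delta_i$. You take the conditionally Gaussian reference to be the martingale part $\un\Delta^n_i X/\sqrt{\Den}$ alone and list ``the truncated fractional component with $\rho$ frozen at $(i-\theta_n)\Den$'' among the small corrections. But that frozen fractional piece is of size $\Den^{H-1/2}$, so your Taylor expansion of $\nabla f(y_i)$ around the martingale part leaves $\bbe[\nabla f(y_i)\mid\calf^n_{i-\theta_n}]=O(\Den^{H-1/2})$, and the resulting direct bound on the sum is $\Den^{-1/2}\Den^{H-1/2}\Den^{H-1/2+\theta(1-H)}=\Den^{2H-3/2+\theta(1-H)}$. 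For this to vanish one needs $\theta>(3/2-2H)/(1-H)$, which is strictly stronger than the hypothesis $\theta>(3/2-2H)/(2-2H)$ and is in fact incompatible with $\theta<\tfrac12$ whenever $H\le\tfrac23$. The fix, which is exactly what the paper does, is to include the frozen fractional part in the Gaussian reference: set
\[
\xi^n_i=\si_{(i-\theta_n)\Den}\frac{\un\Delta^n_i B}{\sqrt{\Den}}+\frac{1}{\sqrt{\Den}}\int_{(i-\theta_n)\Den}^{(i+L-1)\Den}\rho_{(i-\theta_n)\Den}\,\dd B_s\,\un\Delta^n_i g(s).
\]
This $\xi^n_i$ is \emph{exactly} centered Gaussian given $\calf^n_{i-\theta_n}$, so $\bbe[\partial_\alpha f(\xi^n_i)\mid\calf^n_{i-\theta_n}]=0$ identically by oddness, and the remaining correction $y_i-\xi^n_i$ is only of size $\Den^{1/2}+(\theta_n\Den)^{1/2}$ (drift plus coefficient fluctuations). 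With this reference the conditional-expectation piece is $O(\Den^{(H-1/2)(1-\theta)})$, valid for all $H>\tfrac12$.

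As a minor organizational point, the paper centers the \emph{full} increment $\delta^n_i=f(y_i+\delta_i)-f(y_i)$ by $\bbe[\delta^n_i\mid\calf^n_{i-\theta_n}]$ before any Taylor expansion; this delivers the martingale structure for all orders at once (so the quadratic and cubic terms need no separate treatment) and confines the Taylor expansion to the conditional-expectation sum, where the symmetry argument applies cleanly.
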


\subsubsection*{Step 2: Centering and removing the fractional component}

In contrast to $\un\Delta^n_i Y$, two   truncated increments $\un{\Delta}^n_i Y^{\mathrm{tr}}$ and $\un{\Delta}^n_{i'} Y^{\mathrm{tr}}$ are defined on disjoint intervals as soon as  $\lvert i-i'\rvert\geq L+\theta_n$. By centering with the corresponding conditional expectations, we obtain a partial martingale structure, allowing us to show the following---in our opinion surprising---result: 
\begin{Proposition} \label{lem:approx:5} If $\theta$ is chosen according to \eqref{theta:value}, then 
	\begin{equation} \label{approx:before:CLT}   \begin{split} 
			& \DenOneHalf   \sum_{i = \theta_n + 1}^{[t/\Delta_n]-L+1}\biggl\{ f \biggl( \frac{\un{\Delta}_i^n Y^{\mathrm{tr}}}{\sqrt{\Delta_n}} \biggr) - 
			\bbe \biggl[f \biggl( \frac{\un{\Delta}_i^n Y^{\mathrm{tr}}}{\sqrt{\Delta_n}} \biggr) \mathrel{\Big|} \calf^n_{i - \theta_n} \biggr]\biggr\}  \\
			&\quad -\DenOneHalf
			\sum_{i = \theta_n + 1}^{[t/\Delta_n]-L+1} \biggl\{ f \biggl(\si_{(i- 1) \Den} \frac{\uDeni B}{\DenSquared}\biggr) - 
			\bbe \biggl[ f \biggl(\si_{(i- 1) \Den} \frac{\uDeni B}{\DenSquared} \biggr) \mathrel{\Big|} \calf^n_{i - 1} \biggr] \biggr\} \LeinsKonv 0.
		\end{split}\!\!
	\end{equation}
\end{Proposition}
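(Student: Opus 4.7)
The plan is to prove \eqref{approx:before:CLT} by rewriting its left-hand side as $S^n_1(t) - S^n_2(t)$, where
\begin{align*}
S^n_1(t) &:= \DenOneHalf\sum_{i=\theta_n+1}^{[t/\Den]-L+1}\bigl(\xi_i^n - \bbe[\xi_i^n\mid\calf^n_{i-\theta_n}]\bigr),\\
S^n_2(t) &:= \DenOneHalf\sum_{i=\theta_n+1}^{[t/\Den]-L+1}\bigl(\bbe[f(V_i^n)\mid\calf^n_{i-\theta_n}] - \bbe[f(V_i^n)\mid\calf^n_{i-1}]\bigr),
\end{align*}
with $W_i^n := \un{\Delta}_i^n Y^{\mathrm{tr}}/\DenSquared$, $V_i^n := \si_{(i-1)\Den}\un{\Delta}_i^n B/\DenSquared$, $U_i^n := W_i^n - V_i^n$ and $\xi_i^n := f(W_i^n) - f(V_i^n)$. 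Both pieces will be shown to vanish in $L^1$. The common workhorse is the block-martingale inequality: for any $\calf^n_{i+L-1}$-measurable $T_i$, partitioning the indices into $\theta_n + L$ residue classes modulo $\theta_n + L$ turns the differences $T_i - \bbe[T_i\mid\calf^n_{i-\theta_n}]$ within each class into a martingale-difference sequence relative to the coarsened filtration $(\calf^n_{i-\theta_n})_i$, which yields $\lVert\sum_i(T_i - \bbe[T_i\mid\calf^n_{i-\theta_n}])\rVert_2^2 \leq C(\theta_n + L)\sum_i\lVert T_i\rVert_2^2$.

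\textbf{Treatment of $S^n_2$.} Because $\un{\Delta}_i^n B/\DenSquared$ is a standard Gaussian vector independent of $\calf^n_{i-1}$, we have $\bbe[f(V_i^n)\mid\calf^n_{i-1}] = \phi_n(\si_{(i-1)\Den})$ for an even $C^\infty$ function $\phi_n$ of polynomial growth (essentially $\mu_f$ from \eqref{def:uRhof1f2}). Hence the summand of $S^n_2$ equals $\phi_n(\si_{(i-1)\Den}) - \bbe[\phi_n(\si_{(i-1)\Den})\mid\calf^n_{i-\theta_n}]$, whose $L^2$-norm is $O(\sqrt{\theta_n\Den})$ by the Hölder regularity of $\si$ from Assumption~\ref{Ass:Bprime}. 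The block inequality then gives $\lVert S^n_2(t)\rVert_2 = O(\DenOneHalf\sqrt{(\theta_n+L)(t/\Den)\theta_n\Den}) = O(\sqrt{t}\,\Den^{1/2-\theta})$, which vanishes since $\theta<\tfrac12$.

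\textbf{Treatment of $S^n_1$: Taylor expansion and high-order terms.} Decompose $U_i^n = U_i^{(A)} + U_i^{(M)} + U_i^{(Z)}$, where these correspond to $\un{\Delta}_i^n A$, the martingale residual $\int_{(i-1)\Den}^{(i+L-1)\Den}(\si_s-\si_{(i-1)\Den})\,\dd B_s$, and the fractional part $\un{\Delta}_i^n Z^{\mathrm{tr}}$, each divided by $\DenSquared$; their $L^p$-norms are $O(\DenSquared)$, $O(\DenSquared)$ and $O(\Den^{H-1/2})$ respectively, the last dominating for $H$ close to $\tfrac12$. Using $f\in C^{2N(H)+1}$ with derivatives of polynomial growth, Taylor-expand at $V_i^n$ to order $2N(H)$:
\begin{equation*}
\xi_i^n = \sum_{k=1}^{2N(H)}\tfrac{1}{k!}\pd^k f(V_i^n)\bigl[(U_i^n)^{\otimes k}\bigr] + R_i^n,\qquad \lVert R_i^n\rVert_2 = O\bigl(\Den^{(2N(H)+1)(H-1/2)}\bigr).
\end{equation*}
Since $N(H)\geq 1/(2H-1)-1$, we have $(2N(H)+1)(2H-1)\geq 3-2H > 1 > \theta$, so the block bound shows that $R_i^n$ contributes $O(\sqrt{t}\,\Den^{(2N(H)+1)(H-1/2)-\theta/2})\to 0$. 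Each Taylor term $T_i^{(k)} := \tfrac{1}{k!}\pd^k f(V_i^n)[(U_i^n)^{\otimes k}]$ has $L^2$-norm $O(\Den^{k(H-1/2)})$ and thus contributes $O(\sqrt{t}\,\Den^{k(H-1/2)-\theta/2})$, which vanishes whenever $k(2H-1)>\theta$.

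\textbf{The main obstacle: low-order Taylor terms.} The heart of the proof is to control the remaining finite family of $T_i^{(k)}$ with $k(2H-1)\leq\theta$, for which the crude block bound is too weak---note that by \eqref{theta:value} we can have $\theta>2H-1$ when $H$ is close to $\tfrac12$. For these, I would further multilinearly expand $(U_i^n)^{\otimes k}$ as a sum of products of $U_i^{(A)}$, $U_i^{(M)}$, $U_i^{(Z)}$: any monomial containing at least one factor of $U_i^{(A)}$ or $U_i^{(M)}$ gains an extra $\DenSquared$ per such factor and is absorbed by another round of the block bound. The hardest case is the pure fractional monomial $\tfrac{1}{k!}\pd^k f(V_i^n)[(U_i^{(Z)})^{\otimes k}]$; here, conditional on $\calf^n_{i-\theta_n}$, the pair $(V_i^n,U_i^{(Z)})$ is approximately jointly Gaussian (both are mean-zero Wiener integrals against $B$ whose covariance structure is driven by $\si_{(i-1)\Den}$ and $\rho_s$ over $[(i-\theta_n)\Den,(i+L-1)\Den]$), so a Stein--Wick-type computation combined with the evenness of $f$ shows that $\bbe[T_i^{(k)}\mid\calf^n_{i-\theta_n}]$ matches $T_i^{(k)}$ up to an error of order $\sqrt{\theta_n\Den}$ controlled by the Hölder fluctuations of $\si$ and $\rho$ on that window. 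Plugging this refined bound back into the block inequality finally yields $\lVert S^n_1(t)\rVert_2\to 0$, and hence the desired $L^1$-convergence. Balancing the Gaussian Wick computation against the Hölder regularities of $\si$ and $\rho$ is the main technical obstacle and constitutes the bulk of the argument.
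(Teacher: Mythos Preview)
Your decomposition into $S^n_1$ and $S^n_2$, the treatment of $S^n_2$, and the handling of the high-order Taylor remainder are all correct and match the paper's strategy. The gap is in your treatment of the low-order pure fractional terms $T_i^{(k)} = \tfrac{1}{k!}\partial^k f(V_i^n)[(U_i^{(Z)})^{\otimes k}]$ for small $k$.

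Your claim that ``$\bbe[T_i^{(k)}\mid\calf^n_{i-\theta_n}]$ matches $T_i^{(k)}$ up to an error of order $\sqrt{\theta_n\Den}$'' is not correct. Take $k=1$ and freeze $\si,\rho$ at $(i-\theta_n)\Den$ so that $(V_i^n,U_i^{(Z)})$ is \emph{exactly} centered Gaussian given $\calf^n_{i-\theta_n}$. Then $\bbe[T_i^{(1)}\mid\calf^n_{i-\theta_n}]$ is a deterministic function of the conditional covariance (by Stein's lemma it equals $\mathrm{Cov}(V_i^n,U_i^{(Z)})\,\bbe[\partial^2 f(V_i^n)]$), but $T_i^{(1)}$ itself has conditional variance of exact order $\bbe[(\partial f(V_i^n))^2 (U_i^{(Z)})^2]\asymp \Den^{2H-1}$, not $\theta_n\Den$. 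Neither evenness of $f$ nor any Wick identity helps here: centering by a conditional expectation cannot reduce the conditional variance, and that variance is genuinely $\Den^{2H-1}$. Hence after the block bound you are back to $\sqrt{\theta_n}\,\Den^{H-1/2}$, which the paper explicitly notes fails to vanish once $H<1-\tfrac{1}{2\sqrt 2}\approx 0.65$, given the lower bound on $\theta$ in \eqref{theta:value}.

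The missing idea is a multi-scale decomposition of $U_i^{(Z)}$. After freezing $\si,\rho$, the paper introduces a finite ladder of scales $\theta=\theta^{(0)}>\theta^{(1)}>\cdots>\theta^{(Q)}=0$ satisfying $\theta^{(q)} > \tfrac{1}{2-2H}\theta^{(q-1)}-\tfrac{H-1/2}{1-H}$, and splits the fractional integral into pieces over $[(i-\theta_n^{(q-1)})\Den,(i-\theta_n^{(q)})\Den]$. Expanding $(U_i^{(Z)})^{k}$ by the multinomial theorem, each resulting product is indexed by $(j_1,\dots,j_Q)$; if $q_{\min}$ is the smallest $q$ with $j_q\neq 0$, then (i) the $\calf^n_{i-\theta_n}$-conditional expectation of that product coincides with its $\calf^n_{i-\theta^{(q_{\min}-1)}_n}$-conditional expectation, so the block martingale bound can be run with the \emph{finer} block length $\theta^{(q_{\min}-1)}_n$, and (ii) each factor with index $q$ picks up an extra $\Den^{\theta^{(q)}(1-H)}$ from the kernel decay \eqref{eq:away}. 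The recursion on the $\theta^{(q)}$ is designed exactly so that
\[
\sqrt{\theta^{(q_{\min}-1)}_n}\,\Den^{H-\frac12}\,\Den^{\theta^{(q_{\min})}(1-H)}\longrightarrow 0
\]
for every piece. This trade-off between refining the conditioning $\sigma$-field and exploiting kernel decay is the crux of the argument; it cannot be replaced by a Gaussian computation on the undivided increment.
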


We find this result surprising because upon centering by conditional expectations, the fractional component becomes negligible for all $H>\frac12$. Note that for $H\downarrow \frac12$, the local regularity of $Z$ becomes closer and closer to that of $X$. Since we are interested in convergence at a rate  of $1/\sqrt{\Delta_n}$, one would therefore expect that there is some critical value $H_0$ such that Proposition~\ref{lem:approx:5} holds for all $H>H_0$, while there is a bias due to the fractional part if $H<H_0$. 

\subsubsection*{Step 3: Martingale CLT}

After centering by conditional expectations and eliminating the fractional part, we obtain the following CLT:

\begin{Proposition} \label{prop:CLT:1}
	Under Assumption~\ref{Ass:Bprime},
	\begin{equation}
		\DenOneHalf \sum_{i = 1}^{[t/\Delta_n]-L+1} \biggl\{ f \biggl(\si_{(i- 1) \Den} \frac{\uDeni B}{\DenSquared}\biggr) -
		\bbe \biggl[ f \biggl(\si_{(i- 1) \Den} \frac{\uDeni B}{\DenSquared} \biggr) \mathrel{\Big|} \calf^n_{i - 1} \biggr] \biggr\} \stabKonv \calz,
	\end{equation}
	where $\mathcal{Z}$ is exactly as in \eqref{CLT:1}.
\end{Proposition}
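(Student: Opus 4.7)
The plan is to treat this as the classical stable CLT for a functional of $L$ consecutive increments of the pure continuous semimartingale $X_t=\int_0^t\si_s\,\dd B_s$, following the template developed in \cite{Jacod12}. All new model features (the fractional component, correlation with the fractional driver, roughness of the kernel) have already been removed in Propositions~\ref{prop:1} and \ref{lem:approx:5}, so what remains is essentially a classical argument adapted to the present notation.

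First, write the summand as $\zeta_i^n=\Den^{1/2}\{f(\si_{(i-1)\Den}\uDeni B/\sqrt{\Den})-\mu_f(c((i-1)\Den))\}$. Because conditional on $\calf^n_{i-1}$ the Gaussian matrix $\si_{(i-1)\Den}\uDeni B/\sqrt{\Den}$ is centered with covariance exactly $c((i-1)\Den)$, we have $\bbe[\zeta_i^n\mid\calf^n_{i-1}]=0$. The difficulty is that $\zeta_i^n$ is $\calf^n_{i+L-1}$-measurable rather than $\calf^n_i$-measurable, so $(\zeta_i^n)$ is not a martingale difference array in the natural filtration $\calf^n_k=\calf_{k\Den}$. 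I would circumvent this by the standard Doob decomposition
\begin{equation*}
\zeta_i^n=\sum_{j=0}^{L-1}M_{i,j},\qquad M_{i,j}=\bbe[\zeta_i^n\mid\calf^n_{i+j}]-\bbe[\zeta_i^n\mid\calf^n_{i+j-1}],
\end{equation*}
and reindex by $k=i+j$, so that $\sum_i\zeta_i^n=\sum_k\chi_k^n$ with $\chi_k^n=\sum_{j=0}^{L-1}M_{k-j,j}$, which is now a genuine $(\calf^n_k)$-martingale difference array.

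I would then invoke the stable martingale CLT (e.g.\ \cite{Jacod12}, Theorem 2.2.15, or Jacod--Shiryaev IX.7.28) applied to $(\chi_k^n)$. Three of its conditions are routine. The conditional Lyapunov bound $\sum_k\bbe[\lVert\chi_k^n\rVert^4\mid\calf^n_{k-1}]\limp 0$ follows from the polynomial growth of $f$ and the boundedness of $\si$ under Assumption~\ref{Ass:Bprime}, which gives $\bbe[\lVert\chi_k^n\rVert^4\mid\calf^n_{k-1}]=O(\Den^2)$. The orthogonality condition $\sum_k\bbe[\chi_k^n\,\Delta^n_k B\mid\calf^n_{k-1}]\limp 0$ is immediate from the evenness of $f$, which makes each $\chi_k^n$ an even function of the relevant increments of $B$. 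Orthogonality against any bounded martingale $N\perp B$ follows from the Brownian representation theorem, since each $\chi_k^n$ lives in the Wiener chaos generated by $B$.

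The real work lies in proving the conditional variance convergence
\begin{equation*}
\sum_{k\leq[t/\Den]}\bbe[\chi_k^n(\chi_k^n)^T\mid\calf^n_{k-1}]\limp\int_0^t\gamma_f(c(s))\,\dd s.
\end{equation*}
The strategy is to compute each $\bbe[M_{k-j,j}M_{k-j',j'}\mid\calf^n_{k-1}]$ explicitly by Gaussian calculus, using that conditional on $\calf^n_{i-1}$, $\si_{(i-1)\Den}$ is deterministic and $\uDeni B/\sqrt{\Den}$ is a standard Gaussian matrix. A blocking argument that freezes $\si$ on intervals of length $L\Den$ then reduces the left-hand side to a Riemann sum with integrand $\gamma_f(c(s))$; the càdlàg regularity of $\si^{(0)}$ and the $L^p$ estimates \eqref{eq:Hoelder} control the replacement error. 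The main obstacle here is the combinatorial bookkeeping that shows the contributions of the cross-$j$ terms from the Doob decomposition, combined with the overlap between $\uDeni B$ and $\uDeni[i'] B$ for $|i-i'|<L$, add up to exactly the form $\gamma_f(c(s))$ as stated in \eqref{cov:fct:lim:CLT}; the stochastic part $\int_0^t\wti\si_s\,\dd B_s$ of the representation \eqref{repr:si} contributes only to lower-order corrections that can be absorbed into $o_\bbp(1)$.
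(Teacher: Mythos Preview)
The paper does not give a direct proof here; it simply notes that, once the fractional part has been removed, the statement is a special case of \cite[Theorem~11.2.1]{Chong20}. Your outline via the Doob telescoping decomposition and the stable martingale CLT is exactly the standard route underlying such results, so the two approaches coincide.

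One step in your outline deserves more care. Your justification of the orthogonality condition $\sum_k\bbe[\chi_k^n\,\Delta^n_k B\mid\calf^n_{k-1}]\limp 0$ is too quick: the evenness of $f$ does \emph{not} make $\chi_k^n$ an even function of $\Delta^n_k B$ alone (only of all the involved increments jointly), and for $j\geq 1$ the individual terms $\bbe[M_{k-j,j}\,\Delta^n_k B\mid\calf^n_{k-1}]$ are in general nonzero. What evenness actually buys is that each such term, as a function of the earlier increments $\Delta^n_{k-j}B,\ldots,\Delta^n_{k-1}B$, is jointly \emph{odd}; it is therefore $\calf^n_{k-j-1}$-centered and of size $O_\bbp(\Den)$, so a martingale size estimate gives $\sum_k\bbe[M_{k-j,j}\,\Delta^n_k B\mid\calf^n_{k-1}]=O_\bbp(\sqrt{\Den})$. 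This is a genuine (if routine) additional step, not an immediate consequence of parity.
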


\subsubsection*{Step 4: Convergence of conditional expectations}

The last step is to show that the conditional expectations 
$$ \Delta_n \sum_{i = \theta_n + 1}^{[t/\Delta_n]-L+1} \bbe \biggl[f \biggl( \frac{\un{\Delta}_i^n Y^{\mathrm{tr}}}{\sqrt{\Delta_n}} \biggr) \mathrel{\Big|} \calf^n_{i - \theta_n} \biggr]$$
that appear in \eqref{approx:before:CLT} converge to $V_f(Y,t)$, after removing the asymptotic bias term $\cala^{\prime n}_t$.
\begin{Proposition}\label{prop:4} Grant Assumption~\ref{Ass:Bprime}. If $\theta$ satisfies \eqref{theta:value}, then
	\begin{equation}  
		\mathtoolsset{multlined-width=0.9\displaywidth} \begin{multlined} 
			\DenMinOneHalf\biggl\{ \Den \sum_{i=\theta_n + 1}^{[t/\Delta_n]-L+1}  \bbe \biggl[f \biggl( \frac{\un{\Delta}_i^n Y^{\mathrm{tr}}}{\sqrt{\Delta_n}} \biggr) \mathrel{\Big|} \calf^n_{i - \theta_n} \biggr] -\int_{0}^{t} \mu_f (c(s)) \, \dd s -\cala^{\prime n}_t \biggr \} \LeinsKonv 0.
		\end{multlined}
	\end{equation}
\end{Proposition}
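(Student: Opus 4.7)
The plan is a direct Gaussian conditioning argument. Given $\calf^n_{i-\theta_n}$, we first freeze the coefficients $\si$ and $\rho$ at time $(i-\theta_n)\Den$; the resulting approximation of $\un\Delta^n_i Y^{\mathrm{tr}}/\DenSquared$ is a centred Gaussian vector whose conditional covariance matrix $V^n_i$ decomposes into three blocks. The $X$--$X$ covariance (from the pure Brownian part) contributes $c_{(i-\theta_n)\Den}$. The $Z$--$Z$ covariance, given by $\int \Delta^n_{i+\ell-1}g(s)\,\Delta^n_{i+\ell'-1}g(s)\,\dd s$, equals the standard fBm autocovariance $\Den^{2H}\Ga^H_{|\ell-\ell'|}$ up to a boundary truncation that is controlled via Proposition~\ref{prop:1}, and therefore yields $\Den^{2H-1}\pi_{(i-\theta_n)\Den}$ after scaling by $1/\Den$. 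The $X$--$Z$ cross-covariances, computed by the substitution $u=(i+\ell'-1)\Den-s$, yield exactly $\Den^{H-\frac12}\la_{(i-\theta_n)\Den}$: for $\ell\neq\ell'$ only one of the two cross terms is nonzero because $g$ is supported on $\{u>0\}$, whereas for $\ell=\ell'$ both contribute, which is precisely what the factor $2^{-\bone_{\{\ell=\ell'\}}}$ in \eqref{mat:lim:thms} absorbs.

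Writing $\mu_f(v)=\E[f(Z)]$ for $Z\sim N(0,v)$ and abbreviating $c_i=c_{(i-\theta_n)\Den}$, $\la_i=\la_{(i-\theta_n)\Den}$, $\pi_i=\pi_{(i-\theta_n)\Den}$, we Taylor expand
\begin{equation*}
\mu_f(V^n_i)=\mu_f(c_i)+\sum_{j=1}^{N(H)}\sum_{|\chi|=j}\frac{1}{\chi!}\,\pd^\chi \mu_f(c_i)\,\bigl(\Den^{H-\frac12}\la_i+\Den^{2H-1}\pi_i\bigr)^\chi+R^n_i,
\end{equation*}
with $(\cdot)^\chi$ interpreted component-wise. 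Expanding the binomial in each factor, a monomial of total degree $|\chi|=j$ that draws $s$ of its factors from $\pi$ is of order $\Den^{(j+s)(H-\frac12)}$, so it contributes at order $\DenSquared$ or larger precisely when $j+s\le N(H)$. Partitioning by $s=0$ versus $s\ge 1$ reproduces exactly the two-sum structure of $\cala^{\prime n}_t$ in \eqref{eq:Ant}: for $|\chi|\le N'(H)$ all mixed $\la/\pi$ contributions are retained (this is the first sum), while for $N'(H)<|\chi|\le N(H)$ only the pure-$\la$ terms survive (this is the second sum). Multiplying by $\Den$, summing over $i$ and passing from Riemann sums to integrals using the Hölder regularity of $c,\la,\pi$ inherited from Assumption~\ref{Ass:Bprime} gives
\begin{equation*}
\Den\sum_{i=\theta_n+1}^{[t/\Den]-L+1}\mu_f(V^n_i)=\int_0^t\mu_f(c(s))\,\dd s+\cala^{\prime n}_t+o_{L^1}(\DenSquared).
\end{equation*}

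The main technical obstacle will be estimating three sources of error to this level of precision. (i) \emph{Freezing error}: replacing $\si_s$ by $\si_{(i-\theta_n)\Den}$ requires handling both the smooth part $\si^{(0)}$ via the Hölder estimate \eqref{eq:Hoelder} and the martingale piece $\int \wt\si\,\dd B$, which is correlated with the Brownian integrator driving $\un\Delta^n_i Z^{\mathrm{tr}}$; the dangerous first-order cross contributions vanish because $f$ is assumed even, and the remaining second-order pieces are of smaller order after summation. (ii) \emph{Residual truncation}: the $Z$--$Z$ covariance was taken over $[(i-\theta_n)\Den,(i+L-1)\Den]$ rather than all of $\bbr$, and the long-range tail gives an error of order $\Den^{2H}\theta_n^{2H-2}$ per increment, which is $o(\DenSquared)$ globally thanks to the choice \eqref{theta:value} of $\theta$. (iii) \emph{Taylor remainder} $R^n_i$, bounded by the polynomial-growth estimates on the $(2N(H)+1)$-st derivative of $\mu_f$ (inherited from the assumption on $f$) combined with Gaussian moment bounds on $V^n_i-c_i$. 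Assembling these estimates completes the proof.
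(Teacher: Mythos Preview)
Your outline follows the paper's general strategy (Gaussian conditioning, covariance identification, Taylor expansion), and the handling of the truncation tail in (ii) and the Taylor remainder in (iii) is sound. The gap lies in the freezing step (i), specifically for $\rho$. You freeze $\rho$ at the single time $(i-\theta_n)\Den$; the resulting error in the normalized increment has $L^2$-size of order $\sqrt{\theta_n}\,\Den^{H}$ (combine the $\tfrac12$-H\"older regularity of $\rho$ with $\int|\un\Delta^n_i g|^2\lec\Den^{2H}$), so after summation it contributes $\Den^{-1/2}\sqrt{\theta_n}\,\Den^{H}=\Den^{H-1/2-\theta/2}$ to the left-hand side of the proposition. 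This tends to $0$ only if $\theta<2H-1$, which for $H<1-\tfrac{1}{2\sqrt2}\approx0.646$ is \emph{incompatible} with the lower bound in \eqref{theta:value}. The parity argument you invoke does not help here: for $H>\tfrac12$, Assumption~\ref{Ass:Bprime} supplies no It\^o decomposition for $\rho$, so the first-order term $\E[\partial_\al f(\cdot)\,\eps^n_i\mid\calf^n_{i-\theta_n}]$ cannot be placed into a fixed (even) Wiener chaos and has no reason to vanish.

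The paper's remedy (Lemma~\ref{lem:approx:6}) is to discretize $\rho$ \emph{piecewise} along a finite chain of intermediate scales $\theta=\theta^{(0)}>\theta^{(1)}>\cdots>\theta^{(Q)}=0$ satisfying the recursion~\eqref{iter:CLT}: on each block $[(i-\theta_n^{(q-1)})\Den,(i-\theta_n^{(q)})\Den]$ the H\"older increment of $\rho$ is only $(\theta_n^{(q-1)}\Den)^{1/2}$ while the kernel mass there is $\Den^{H+\theta^{(q)}(1-H)}$, and the recursion ensures every such product is $o(\sqrt{\Den})$ after summation. This multi-scale device is essential for $H$ close to $\tfrac12$ and is absent from your proposal. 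A related but less severe issue: freezing $\si^{(0)}$ at $(i-\theta_n)\Den$ rather than $(i-1)\Den$ yields a global error $\Den^{\gamma(1-\theta)-1/2}$, which need not vanish when $\gamma$ is barely above $\tfrac12$; the paper therefore freezes $\si$ at $(i-1)\Den$, at the price of the extra steps \eqref{eq:part1}--\eqref{eq:part2}.
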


\begin{proof}[Proof of Theorem~\ref{thm:CLT:mixedSM}]The theorem immediately follows from Propositions~\ref{prop:1}--\ref{prop:4} and standard properties of stable convergence in law.
\end{proof}

Concerning the proofs of Propositions~\ref{prop:1}--\ref{prop:4}, notice that since the fractional part plays no role in Proposition~\ref{prop:CLT:1}, it is a special case of \cite[Theorem~11.2.1]{Chong20}. Moreover, the proof of  Proposition~\ref{prop:1} is very similar to that of \cite[Lemma C.1]{Chong21} (which was for the case $H<\frac12$), so we postpone it to the Appendix~\ref{appn}.

The main novelty is in the proof of Propositions~\ref{lem:approx:5} and \ref{prop:4}, which we detail in the following. Here, the fact that the martingale and the fractional components of $Y$ are driven by the same 
Brownian motion (instead of independent ones) starts to play a pivotal role. In what follows, we make repeated use of so-called \emph{size estimates} and \emph{martingale size estimates}. While the underlying ideas have existed since the beginnings of high-frequency statistics, we refer to \cite[Appendix~A]{Chong21}  for a more formal and more explicit version of these estimates. Another important estimate that we use many times and is true under Assumption~\ref{Ass:Bprime} is the following (cf.\ \cite[Equation~(B.3)]{Chong21}): for every $p>0$,
\beq\label{eq:away} \E\biggl[\biggl\lvert\int_0^{(i-\theta_n)\Den} \un\Delta^n_i g(s)\rho_s\,\dd B_s\biggr\rvert^p\biggr]^{\frac 1p} \lec \Den^{H+\theta(1-H)}. \eeq

\subsection{Proof of Proposition~\ref{lem:approx:5}}

Since $A$ is Lipschitz and both $\si$ and $\rho$ are $\frac12$-Hölder continuous in $L^p$, it is easy to see that replacing $	\un{\Delta}^n_i Y^{\mathrm{tr}}/\sqrt{\Den}$ by
\beq\label{eq:wtYtr} \begin{split}	\frac{\un{\Delta}^n_i \wt Y^{\mathrm{tr}}}{\sqrt{\Den}} &= \si_{(i-\theta_n)\Den} \frac{\un\Delta^n_i B }{\sqrt{\Den}}+\wt\xi^n_i\\ &=\si_{(i-\theta_n)\Den} \frac{\un\Delta^n_i B }{\sqrt{\Den}}+  \frac{1}{\sqrt{\Den}}\int_{(i - \theta_n)\Delta_n}^{(i + L - 1) \Delta_n} \rho_{(i-\theta_n)\Delta_n} \, \dd B_s \, \uDeni g(s)\end{split}\eeq
only leads an asymptotically negligible error. Indeed, since each term in
\begin{equation*} 
	\DenOneHalf   \sum_{i = \theta_n + 1}^{[t/\Delta_n]-L+1}\biggl\{ f \biggl( \frac{\un{\Delta}_i^n Y^{\mathrm{tr}}}{\sqrt{\Delta_n}} \biggr) - f \biggl( \frac{\un{\Delta}_i^n \wt Y^{\mathrm{tr}}}{\sqrt{\Delta_n}} \biggr)-
	\bbe \biggl[f \biggl( \frac{\un{\Delta}_i^n Y^{\mathrm{tr}}}{\sqrt{\Delta_n}} \biggr) -f \biggl( \frac{\un{\Delta}_i^n \wt Y^{\mathrm{tr}}}{\sqrt{\Delta_n}} \biggr)\mathrel{\Big|} \calf^n_{i - \theta_n} \biggr]\biggr\} 
\end{equation*}
is $\calf^n_{i+L}$-measurable with zero $\calf^n_{i-\theta_n}$-conditional expectation, a  size estimate shows that the previous display is of order $\sqrt{\theta_n}[\Den^{1/2}+(\theta_n\Den)^{1/2}]\leq 2\Den^{1/2-\theta}$, which tends to $0$ by \eqref{theta:value}.

The main work now consists in showing that we can drop $\wt \xi^n_i$ in \eqref{eq:wtYtr}. Then \eqref{approx:before:CLT} follows by moving the time point at which $\si$ is frozen from $(i-\theta_n)\Den$ to $(i-1)\Den$ (which, analogously to  what we just proved, is asymptotically negligible). The difficulty here is that a simple martingale size estimate is not sufficient: since $\wt\xi^n_i$ is of magnitude $\Den^{H-1/2}$, dropping it would lead to an error which is bounded by $\sqrt{\theta_n}\Den^{H-1/2}$. And this only goes to $0$ (for the smallest possible $\theta$ in \eqref{theta:value}) if $H>1-\frac{1}{2\sqrt{2}}\approx 0.65$. As a consequence, we need a more sophisticated argument.

We start by expanding
\begin{multline*} f \biggl( \frac{\un{\Delta}_i^n \wt Y^{\mathrm{tr}}}{\sqrt{\Delta_n}} \biggr) - f \biggl( \frac{\si_{(i-\theta_n)\Den}\un{\Delta}_i^n B}{\sqrt{\Delta_n}} \biggr) \\= \sum_{k=1}^{K} \sum_{\lvert \chi\rvert=k} \frac{1}{\chi!} \pd^\chi f\biggl( \frac{\si_{(i-\theta_n)\Den}\un{\Delta}_i^n B}{\sqrt{\Delta_n}} \biggr)(\wt \xi^n_i)^\chi + o_\bbp (\Den^{(H-\frac12)K} ) \end{multline*}
into a Taylor sum, where $K=N(H)+1$ and $N(H)$ is the same number as in Theorem~\ref{thm:CLT:mixedSM}. As $K\geq\frac{1}{2H-1}$, if we use a martingale size estimate, the $o_\bbp$-term contributes $\sqrt{\theta_n}\Den^{1/2}=\Den^{1/2-\theta}$ at most, which is negligible. For the remaining terms, we make the simplifying assumption that $d=L=1$; the subsequent arguments easily extend to any $d,L\in\N$ but require more cumbersome notation in the general case. To give the intuition behind our approach, consider $\chi=k=1$  in which case $(\wt \xi^n_i)^\chi=\Den^{-1/2}\int_{(i-\theta_n)\Den}^{i\Den} \Delta^n_{i} g(s)\rho_{(i-\theta_n)\Den}\,\dd B_s.$ The important observation is now the following: On the one hand, if $s$ is far from $i\Den$, say, $(i-\theta_n)\Den\leq s\leq (i-\theta^{(1)}_n)\Den$ for some $\theta^{(1)}_n = o(\theta_n)$, the corresponding part of the integral is of size $\Den^{H-1/2}(\theta^{(1)}_n)^{-(1-H)}$ by \eqref{eq:away}, which is smaller than  $\Den^{H-1/2}$. On the other hand, if $s$ is close to $i\Den$, the $\calf^n_{i-\theta_n}$-conditional expectation of the integral restricted to  $s\geq (i-\theta_n^{(1)})\Den$ agrees with its $\calf^n_{i-\theta_n^{(1)}}$-conditional expectation, which constitutes an improvement when we apply a martingale size estimate.

It turns out that in order to make this argument work in general, we need to choose not just one but a certain (potentially large but always finite) number of intermediate scales $\theta_n^{(q)} = [\Delta_n^{-\theta^{(q)}}]$ for $q=1, \ldots, Q-1$, where $Q \in \bbn$ and  $\theta^{(q)}$, $q=0, \ldots, Q$ are chosen such that $\theta = \theta^{(0)} > \theta^{(1)} > \cdots > \theta^{(Q-1)} > \theta^{(Q)} = 0$ and	
\begin{equation} \label{iter:CLT}
	\theta^{(q)} > \frac{1}{2-2H}\theta^{(q-1)}-\frac{H-\frac12}{1-H},\qquad q=1,\dots, Q.
\end{equation}
Earlier versions of this trick appear in \cite{BN11,Chong20,Chong20a2,Chong21, Corcuera13}.
By solving the underlying recurrence equation, the reader can check that such a choice exists.
We also let $\theta_n^{(Q)} = 0$.
By the multinomial theorem, for each $\chi=k\in\{1,\dots,K\}$, we have that
\begin{align*}
	(\wt\xi^n_i)^k=\sum_{j_1+\cdots+j_Q=k} {k\choose {j_1, \dots, j_Q}} \prod_{q=1}^Q \biggl( \int_{(i-\theta_n^{(q-1)})\Den}^{(i-\theta_n^{(q)})\Den} \Delta^n_i g(s)\rho_{(i-\theta_n)\Den}\,\dd B_s \biggr)^{j_q}.
\end{align*}
For fixed $k$ and $j_1,\dots,j_Q$, let $q_{\mathrm{min}}=\min\{q=1,\dots,Q: j_q\neq 0\}$. Then
\begin{align*}  &\E\biggl[\pd^\chi f\biggl( \frac{\si_{(i-\theta_n)\Den}\un{\Delta}_i^n B}{\sqrt{\Delta_n}} \biggr)\prod_{q=1}^Q \biggl( \int_{(i-\theta_n^{(q-1)})\Den}^{(i-\theta_n^{(q)})\Den} \Delta^n_i g(s)\rho_{(i-\theta_n)\Den}\,\dd B_s \biggr)^{j_q}\mathrel{\Big|} \calf^n_{i-\theta_n} \biggr]\\
	&=\E\biggl[\pd^\chi f\biggl( \frac{\si_{(i-\theta_n)\Den}\un{\Delta}_i^n B}{\sqrt{\Delta_n}} \biggr)\prod_{q=1}^Q \biggl( \int_{(i-\theta_n^{(q-1)})\Den}^{(i-\theta_n^{(q)})\Den} \Delta^n_i g(s)\rho_{(i-\theta_n)\Den}\,\dd B_s \biggr)^{j_q}\mathrel{\Big|} \calf^n_{i-\theta_n^{(q_{\mathrm{min}}-1)}} \biggr]. \end{align*}
Hence, by a martingale size estimate, its contribution to 
\begin{multline*} 
	\DenOneHalf   \sum_{i = \theta_n + 1}^{[t/\Delta_n]-L+1}\biggl\{   f \biggl( \frac{\un{\Delta}_i^n Y^{\mathrm{tr}}}{\sqrt{\Delta_n}} \biggr) - f \biggl( \frac{\si_{(i-\theta_n)\Den}\un{\Delta}_i^n B}{\sqrt{\Delta_n}} \biggr)\\
	-
	\bbe \biggl[ f \biggl( \frac{\un{\Delta}_i^n Y^{\mathrm{tr}}}{\sqrt{\Delta_n}} \biggr) - f \biggl( \frac{\si_{(i-\theta_n)\Den}\un{\Delta}_i^n B}{\sqrt{\Delta_n}} \biggr)\mathrel{\Big|} \calf^n_{i - \theta_n} \biggr]\biggr\} 
\end{multline*}
is of order $$\sqrt{\theta_n^{(q_{\mathrm{min}}-1)}}\Den^{(H-\frac12)k} \prod_{q=1}^Q \Den^{\theta^{(q)}j_q(1-H)}\leq \sqrt{\theta_n^{(q_{\mathrm{min}}-1)}}\Den^{H-\frac12}\Den^{\theta^{(q_{\mathrm{min}})}(1-H)},$$
which tends to zero by \eqref{iter:CLT}.\halmos

\subsection{Proof of Proposition~\ref{prop:4}}

Let us first remark that, in contrast to Proposition~\ref{lem:approx:5}, we \emph{cannot} drop the fractional component when we determine the asymptotic behavior of the conditional expectations in Proposition~\ref{prop:4}. In fact, the fractional part is precisely the reason why intermediate limits appear and have to be subtracted  to obtain convergence to $V_f(Y,t)$ at a rate of $1/\sqrt{\Den}$.

In a first step, we discretize both $\si$ and $\rho$ in $\un{\Delta}_i^n Y^{\mathrm{tr}}$. Recalling \eqref{iter:CLT}, we define
\begin{align*} 
	\xi_i^{n,\dd} &= \frac1{\sqrt{\Den}} \int_{(i - \theta_n)\Delta_n}^{(i + L - 1) \Delta_n} 
	\sum_{q=1}^{Q} \rho_{(i-\theta_n^{(q-1)})\Den} \ind_{((i-\theta_n^{(q-1)})\Den, (i-\theta_n^{(q)})\Den)}(s)  \, \dd B_s \, \uDeni g(s), \\ 
	\un{\Delta}^n_i Y^{\mathrm{dis}} &= \si_{(i - 1) \Den} \un{\Delta}^n_i B + \sqrt{\Den}\xi_i^{n,\dd}.
\end{align*}
\begin{Lemma}\label{lem:approx:6} Under the assumptions of Proposition~\ref{prop:4}, 
	\begin{equation}\label{eq:help}
		\DenOneHalf \sum_{i=\theta_n + 1}^{[t/\Delta_n]-L+1} \biggl\{ \bbe \biggl[f  \biggl(\frac{\un{\Delta}_i^n Y^{\mathrm{tr}}}{\sqrt{\Delta_n} }  \biggr) \mathrel{\Big|} \calf^n_{i - \theta_n} \biggr] - 
		\bbe \biggl[f  \biggl(\frac{\un{\Delta}_i^n Y^{\mathrm{dis}}}{\sqrt{\Delta_n} }  \biggr) \mathrel{\Big|} \calf^n_{i - \theta_n} \biggr] \biggr\} \LeinsKonv 0.
	\end{equation}	
\end{Lemma}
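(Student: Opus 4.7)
The plan is to decompose $E_i := \un\Delta^n_i Y^{\mathrm{tr}} - \un\Delta^n_i Y^{\mathrm{dis}} = E_i^A + E_i^\si + E_i^\rho$, corresponding to the drift contribution $E_i^A = \un\Delta^n_i A$, the volatility-discretization error $E_i^\si = \un\Delta^n_i X - \si_{(i-1)\Den}\un\Delta^n_i B$ (whose components are stochastic integrals of $\si_\cdot - \si_{(i-1)\Den}$ against $B$), and the noise-volatility-discretization error $E_i^\rho = \int_{(i-\theta_n)\Den}^{(i+L-1)\Den}(\rho_s - \rho^{\mathrm{dis}}_s)\,\dd B_s \cdot \un\Delta^n_i g(s)$, with $\rho^{\mathrm{dis}}$ the piecewise-constant interpolation of $\rho$ at the breakpoints $\{(i-\theta_n^{(q)})\Den\}_{q=0}^Q$. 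Using Assumption~\ref{Ass:Bprime} and \eqref{repr:si}, direct $L^p$-bounds give $\|E_i^A\|_{L^p}\lec \Den$ and $\|E_i^\si\|_{L^p}\lec \Den$; the $\frac12$-Hölder continuity of $\rho$, the It\^o isometry, and the kernel estimate $|\un\Delta^n_i g(s)|\lec \Den (i\Den - s)^{H-\frac32}$ yield $\|E_i^{\rho,q}\|_{L^p} \lec \Den^{H+\frac12}\sqrt{\theta_n^{(q-1)}}(\theta_n^{(q)})^{H-1}$ for the $q$-th sub-piece of $E_i^\rho$.

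Next, Taylor-expand $f(\un\Delta^n_i Y^{\mathrm{tr}}/\sqrt{\Den}) - f(\un\Delta^n_i Y^{\mathrm{dis}}/\sqrt{\Den})$ around $\un\Delta^n_i Y^{\mathrm{dis}}/\sqrt{\Den}$ to order $K = N(H)+1$. The Taylor remainder has $L^1$-norm bounded by powers of $\|E_i\|_{L^p}$ times polynomials in $\|\un\Delta^n_i Y^{\mathrm{dis}}/\sqrt{\Den}\|_{L^p}$; summing over $i$ and multiplying by $\Den^{-1/2}$, these contribute $o(1)$ provided $K$ is large enough to exhaust all $\Den$-powers generated by the scales $\theta_n^{(q)}$ in $E_i^\rho$, which is ensured by the choice $K=N(H)+1$ together with \eqref{iter:CLT}. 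For each intermediate Taylor term of order $k<K$, expand $E_i^{\otimes k}$ as a multinomial in $E_i^A, E_i^\si, E_i^\rho$. Monomials containing $E_i^\rho$-factors are handled by the iterated martingale-size-estimate argument of Proposition~\ref{lem:approx:5}: on the $q$-th sub-interval, centering at $\calf^n_{i-\theta_n^{(q-1)}}$ contributes a factor $\sqrt{\theta_n^{(q-1)}}$, and \eqref{iter:CLT} guarantees that the resulting $\Den$-exponent strictly exceeds $\frac12$. Monomials involving only $E_i^\si$-factors are handled by decomposing $\si_s - \si_{(i-1)\Den}$ via \eqref{repr:si}: the $\si^{(0)}$-part yields a gain $\Den^{\ga-\frac12}$ with $\ga > \frac12$, and the $\int \wti\si\,\dd B$-part combines with Brownian factors in $\pd^k f$ via It\^o's formula to produce only higher-order corrections.

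The main obstacle is the drift. A first-order Taylor term $\pd f(\un\Delta^n_i Y^{\mathrm{dis}}/\sqrt{\Den}) \cdot E_i^A/\sqrt{\Den}$ is pointwise $O(\sqrt{\Den})$, so a naive bound on its contribution to the LHS of \eqref{eq:help} is $O(\Den^{-1/2})$, far from $o(1)$. The rescue exploits the evenness of $f$: since $\pd f$ is odd and $\un\Delta^n_i Y^{\mathrm{dis}}/\sqrt{\Den}$ conditional on $\calf^n_{i-\theta_n}$ is an approximately centered Gaussian vector, odd-moment conditional expectations of $\pd^k f(\un\Delta^n_i Y^{\mathrm{dis}}/\sqrt{\Den})$ vanish to leading order. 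To make this quantitative, I would further freeze $\si_{(i-1)\Den}$ and the piecewise values $\rho_{(i-\theta_n^{(q-1)})\Den}$ at the conditioning time $(i-\theta_n)\Den$; the resulting $L^p$-errors $\lec (\theta_n \Den)^{\ga}$ and $(\theta_n\Den)^{\frac12}$ are absorbed harmlessly owing to the upper bound $\theta < \frac12$ in \eqref{theta:value}. Tracking these nested approximations and all cross-terms between $E_i^A$, $E_i^\si$ and $E_i^\rho$ through the Taylor expansion is the principal technical burden of the proof.
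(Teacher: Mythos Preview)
Your decomposition $E_i=E_i^A+E_i^\si+E_i^\rho$ and the overall Taylor strategy agree with the paper. Two points, however, are either more complicated than needed or contain a genuine gap.

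\medskip
\textbf{A first-order expansion suffices.} You expand to order $K=N(H)+1$, but the paper shows that a first-order expansion with second-order remainder already works: since $\|\kappa_i^n\|_{L^p}\lec \Den^{1/2}+\Den^{H-1/4}$ (the second term coming from the $\rho$-pieces, using $\theta<\tfrac12$), the quadratic remainder contributes at most $\Den^{-1/2}(\Den^{1/2}+\Den^{H-1/4})^2=\Den^{1/2}+\Den^{2H-1}\to 0$. This eliminates the whole multinomial bookkeeping you outline.

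\medskip
\textbf{The $\int\wti\si\,\dd B$ piece of $E_i^\si$.} Your description ``combines with Brownian factors via It\^o's formula to produce only higher-order corrections'' is too vague to be a proof. After a size estimate, this first-order contribution is exactly of order $1$, not $o(1)$. The paper's argument is: (i) freeze $\wti\si$ at $(i-1)\Den$ (error $\Den^\eps$), (ii) replace $\un\Delta^n_i Y^{\mathrm{dis}}$ in $\partial f$ by $\si_{(i-1)\Den}\un\Delta^n_i B$ (error $\Den^{H-1/2}$), and then (iii) observe that, conditionally on $\calf^n_{i-1}$, the factor $\partial f(\si_{(i-1)\Den}\un\Delta^n_i B/\sqrt{\Den})$ lies in the direct sum of \emph{odd} Wiener chaoses (since $f$ is even), while the iterated integral $\int\!\!\int \wti\si_{(i-1)\Den}\,\dd B\,\dd B$ lies in the second chaos; orthogonality of odd and even chaoses makes the conditional expectation \emph{identically zero}. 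This parity/chaos argument is the key idea you are missing for this term.

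\medskip
\textbf{The drift term.} You correctly identify that evenness of $f$ is needed, but your argument is incomplete: freezing only $\si$ and $\rho$ does not allow you to factor $E_i^A$ out of $\E[\partial f(\cdot)\,E_i^A\mid\calf^n_{i-\theta_n}]$, because $E_i^A=\int a_s\,\dd s$ is neither $\calf^n_{i-\theta_n}$-measurable nor conditionally independent of the Gaussian part. The paper first writes $a_s=a_{(i-\theta_n)\Den}+(a_s-a_{(i-\theta_n)\Den})$; the frozen part factors out and the oddness of $\partial f$ (after a further Taylor step replacing $\un\Delta^n_i Y^{\mathrm{dis}}$ by $\si_{(i-1)\Den}\un\Delta^n_i B$) kills the conditional expectation; the unfrozen part $\int(a_s-a_{(i-\theta_n)\Den})\,\dd s$ is handled by dominated convergence using that $a$ is c\`adl\`ag and bounded.

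\medskip
\textbf{The $E_i^\rho$ terms.} No martingale argument is available here (you are bounding conditional expectations, not centred differences), and none is needed: a plain size estimate with your bound $\|E_i^{\rho,q}\|_{L^p}\lec \Den^{H+1/2}\sqrt{\theta_n^{(q-1)}}(\theta_n^{(q)})^{H-1}$ already gives a total contribution $\Den^{H-1/2-\theta^{(q-1)}/2+\theta^{(q)}(1-H)}$, which tends to zero by \eqref{iter:CLT}. The factor $\sqrt{\theta_n^{(q-1)}}$ you obtain comes from the $\tfrac12$-H\"older regularity of $\rho$, not from any ``centering at $\calf^n_{i-\theta_n^{(q-1)}}$''.
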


The proof is quite technical, but the dependence of the Brownian motions driving the martingale and the fractional part of $Y$ do not play any role here, which is why we defer it to the Appendix~\ref{appn}.

\begin{proof}[Proof of Proposition~\ref{prop:4}] By Lemma~\ref{lem:approx:6}, we are left to show that
	\begin{equation}\label{eq:remains}
		\DenOneHalf \sum_{i=\theta_n + 1}^{[t/\Delta_n]-L+1} \biggl\{ 
		\bbe \biggl[f  \biggl(\frac{\un{\Delta}_i^n Y^{\mathrm{dis}}}{\sqrt{\Delta_n} }  \biggr) \mathrel{\Big|} \calf^n_{i - \theta_n} \biggr]
		- V_f(Y,t)-\cala^{\prime n}_t \biggr\} \LeinsKonv 0.
	\end{equation}
	To this end, we recall \eqref{mat:lim:thms} and define $v_0^{n,i}, v^{n,i} \in (\bbr^{d \times L})^2$ by
	\begin{equation} \label{cov:mat:v0}
		\begin{split}
			&	(v_0^{n,i})_{k\ell,k\ell'}  = c((i-1)\Den)_{k\ell, k'\ell'} \\
			&\qquad+\mathtoolsset{multlined-width=0.8\displaywidth} \begin{multlined}[t]   (\si_{(i-1)\Den}   \rho_{(i-\theta_n^{(Q-1)})\Den}^T\bone_{\{\ell\leq \ell'\}}+  \rho_{(i-\theta_n^{(Q-1)})\Den}\si_{(i-1)\Den}^T\bone_{\{\ell'\leq \ell\}})_{k k'} \\
				\times
				\int_{(i + \ell \wedge \ell' - 2 )\Delta_n}^{(i + \ell \wedge \ell' - 1 ) \Delta_n} \frac{\Delta^n_{i + \ell \vee \ell' - 1} g(s)}{\Den}  \,  \dd s \end{multlined}\\ 
			& \qquad +
			\sum_{q=1}^{Q} 
			( \rho_{(i-\theta_n^{(q-1)})\Den} \rho_{(i-\theta_n^{(q-1)})\Den}^T  )_{k k'}
			\int_{(i-\theta_n^{(q-1)})\Den}^{(i-\theta_n^{(q)})\Den} \frac{\Delta^n_{i + \ell - 1} g(s) \Delta^n_{i + \ell' - 1} g(s)}{\Den} 
			\,  \dd s
		\end{split}\!
	\end{equation}
	and
	\begin{equation} \label{eq:vni}
		(v^{n,i})  = c((i-1)\Den) + \la((i-1)\Den) \Den^{H-\frac12} +\pi((i-1)\Den) \Den^{2H-1},
	\end{equation}
	respectively.
	Observe that $v_0^{n,i}$ is the covariance matrix of $\un{\Delta}^n_i Y^{\mathrm{dis}}/\sqrt{\Den}$ if  $\si$ and $\rho$ are deterministic. We separate the proof of \eqref{eq:remains} into four parts:
	\begin{align}
		&	\DenOneHalf \sum_{i=\theta_n + 1}^{[t/\Delta_n]-L+1} \biggl\{ 
		\bbe \biggl[f  \biggl(\frac{\un{\Delta}_i^n Y^{\mathrm{dis}}}{\sqrt{\Delta_n} }  \biggr) \mathrel{\Big|} \calf^n_{i - \theta_n} \biggr]
		- \mu_f \Bigl( \bbe  [ v_0^{n,i} \mid \calf^n_{i - \theta_n}  ] \Bigr) \biggr\} \LeinsKonv 0.\label{eq:part1} \\
		&		\DenOneHalf \sum_{i=\theta_n + 1}^{[t/\Delta_n]-L+1} \Bigl\{  
		\mu_f \Bigl( \bbe  [ v_0^{n,i} \mid \calf^n_{i - \theta_n}  ] \Bigr)-\mu_f ( v_0^{n,i}) \Bigr\} \LeinsKonv 0,\label{eq:part2} \\
		&	 	\DenOneHalf \sum_{i=\theta_n + 1}^{[t/\Delta_n]-L+1}  \{  
		\mu_f ( v_0^{n,i})-\mu_f ( v^{n,i})  \} \LeinsKonv 0,\label{eq:part3}\\
		& \DenOneHalf \sum_{i=\theta_n + 1}^{[t/\Delta_n]-L+1}  \{ \mu_f ( v^{n,i})-V_f(Y,t)-\cala^{\prime n}_t\}\LeinsKonv 0.\label{eq:part4}
	\end{align}
	
	Concerning the first part,
	by a change of variable and \eqref{est:fBm:kernel:trunc:2},  the second term on the right-hand side of \eqref{cov:mat:v0} equals
	\begin{align*}
		&(\si_{(i-1)\Den}   \rho_{(i-\theta_n^{(Q-1)})\Den}^T\bone_{\{\ell\leq \ell'\}}+  \rho_{(i-\theta_n^{(Q-1)})\Den}\si_{(i-1)\Den}^T\bone_{\{\ell'\leq \ell\}})_{k k'} \int_{0}^{\Delta_n} \frac{\Delta^n_{\vert \ell - \ell' \vert +1} g(s)}{\Den}  \,  \dd s\\
		&= \frac1{2^{\bone_{\{\ell=\ell'\}}}}(\si_{(i-1)\Den}   \rho_{(i-\theta_n^{(Q-1)})\Den}^T\bone_{\{\ell\leq \ell'\}}+  \rho_{(i-\theta_n^{(Q-1)})\Den}\si_{(i-1)\Den}^T\bone_{\{\ell'\leq \ell\}})_{k k'}   \Den^{H-\frac12} \Phi^H_{\vert \ell - \ell' \vert}.
	\end{align*}
	With this in mind, the remaining proof of \eqref{eq:part1} is completely analogous to that of \cite[Lemma~D.2]{Chong21} if, in the notation of the reference, we define
	\begin{align*}
		&\mathbb{Y}^{n,r}_i=\frac1{\sqrt{\Den}} \int_{(i - \theta_n)\Delta_n}^{(i + L - 1) \Delta_n} 
		\sum_{q=1}^{r} \rho_{(i-\theta_n^{(q-1)})\Den} \ind_{((i-\theta_n^{(q-1)})\Den, (i-\theta_n^{(q)})\Den)}(s)  \, \dd B_s \, \uDeni g(s),\\
		&	(	\Upsilon^{n,r}_i)_{k\ell,k'\ell'}  = c((i-1)\Den)_{k\ell, k'\ell'}+ \Bigl(\si_{(i-1)\Den}   \rho_{(i-\theta_n^{(Q-1)})\Den}^T\bone_{\{\ell\leq \ell'\}}\\
		&\qquad    +  \rho_{(i-\theta_n^{(Q-1)})\Den}\si_{(i-1)\Den}^T\bone_{\{\ell'\leq \ell\}}\Bigr)_{k k'}  
		\int_{(i + \ell \wedge \ell' - 2 )\Delta_n}^{(i + \ell \wedge \ell' - 1 ) \Delta_n} \frac{\Delta^n_{i + \ell \vee \ell' - 1} g(s)}{\Den}  \,  \dd s \\ & \qquad +
		\sum_{q=r+1}^{Q} 
		( \rho_{(i-\theta_n^{(q-1)})\Den} \rho_{(i-\theta_n^{(q-1)})\Den}^T  )_{k k'}
		\int_{(i-\theta_n^{(q-1)})\Den}^{(i-\theta_n^{(q)})\Den} \frac{\Delta^n_{i + \ell - 1} g(s) \Delta^n_{i + \ell' - 1} g(s)}{\Den} 
		\,  \dd s
	\end{align*}
	and realize upon using \eqref{eq:away} that for any $p>0$,
	\begin{align*}
		\E[\lvert \mathbb{Y}^{n,r}_i \rvert^p]^{\frac1p}&\lec \Den^{\theta^{(r)}(1-H)}\Den^{H-\frac12},\\
		\E\Bigl[\Bigl\lvert \E[\Upsilon^{n,r}_i \mid \calf^n_{i-\theta_n^{(r)}}] - \E[\Upsilon^{n,r}_i \mid \calf^n_{i-\theta_n^{(r-1)}}]\Bigr\rvert^p\Bigr]^{\frac1p}&\lec (\theta_n^{(r-1)}\Den)^{\frac12}\Den^{H-\frac12}.
	\end{align*}
	(For the last inequality, first note that the $\mathrm{d}s$-integrals in $\mathbb{Y}^{n,r}_i$ are $O(\Den^{H-1/2})$. Then observe that the two conditional expectations will become equal if we shift time in the coefficients defining $\mathbb{Y}^{n,r}_i$ to $i-\theta_n^{(r-1)}\Den$, which comes with an error of $(\theta_n^{(r-1)}\Den)^{1/2}$.)
	Similarly, apart from obvious modifications, the proof of \cite[(D.10)]{Chong21} can be reproduced almost identically to show \eqref{eq:part2}. 
	For  \eqref{eq:part3}, the proof in our setting (with $H>\frac12$) is, in fact, much simpler than the analogous statement in \cite[(D.11)]{Chong21}. Indeed, by \eqref{est:fBm:kernel:2}, \eqref{est:fBm:kernel:trunc:2} and \eqref{est:fBm:kernel:trunc},   $v_0^{n,i} - v^{n,i}$ is of size $(\theta_n^{(Q-1)}\Den)^{1/2}\Den^{H-1/2}+\sum_{q=1}^Q (\theta_n^{(q-1)}\Den)^{1/2}\Den^{\theta^{(q)}(2-2H)}\Den^{2H-1}+\Den^{\theta(2-2H)}\Den^{2H-1}$, which is $o(\sqrt{\Den})$ by \eqref{iter:CLT} and \eqref{theta:value} and thus yields \eqref{eq:part3} by the mean-value theorem.	
	
	Concerning \eqref{eq:part4}, we recall the multi-index notations introduced in \eqref{mult:ind:1} and \eqref{mult:ind:2}. We then express the difference 
	$\mu_f ( v^{n,i} ) - \mu_f ( c((i-1)\Den) )$ in a Taylor expansion of order $N(H)$ (which is possible since $f$ is $2(N(H)+1)$-times continuously differentiable by assumption, see \cite[(D.45), (D.46)]{Chong20}) to  obtain
	\begin{equation*} 
		\DenOneHalf \sum_{i=\theta_n + 1}^{[t/\Delta_n]-L+1}  \Bigl\{  \mu_f ( v^{n,i} ) - \mu_f ( c((i-1)\Den) ) -\cala^{\prime n}_i \Bigr \} = K_1^{n}(t) + K_2^{n}(t),
	\end{equation*}
	where
	\begin{equation*}
		\cala^{\prime n}_i=
		\sum_{j=1}^{  {N(H)} } 
		\sum_{\vert \chi \vert = j} \frac{1}{\chi!}  \pd^{\chi} \mu_f  (c((i-1)\Den) ) \Bigl( \Den^{H-\frac12} \la((i-1)\Den) + \Den^{2H-1} \pi((i-1)\Den) \Bigr)^{\chi}
	\end{equation*}
	and
	\begin{equation*}
		K^{n}(t)  = \DenOneHalf \sum_{i=\theta_n + 1}^{[t/\Delta_n]-L+1}
		\sum_{\vert \chi \vert = N(H) + 1}  \frac{1}{\chi!}\pd^{\chi} \mu_f( \chi^n_i ) (v^{n,i} - c((i-1)\Den) )^{\chi} 
	\end{equation*}
	for some  $\chi^n_i$ between $v^{n,i}$ and $c((i-1)\Den)$.

	By simple size estimates and the definition of $N(H)$ in \eqref{eq:NH},
	\begin{equation*}
		\bbe \biggl[ \sup_{t\in[0,T]} {\lvert K^{n}(t) \rvert} \biggr] \lesssim 
		\Den^{-\frac12}  \Den^{(N(H) + 1)(H-\frac12)} \lra 0 
	\end{equation*}	
	as $n\to\infty$. Our last step is to remove the discretization in time, that is, to show that
	\begin{equation*} 
		\Den^{-\frac12}\biggl\{ \Den \sum_{i=\theta_n + 1}^{[t/\Delta_n]-L+1}  \Bigl \{  \mu_f ( c((i-1)\Den) ) + \cala^{\prime n}_i  \Bigr\} -V_f(Y,t)-\cala^{\prime n}_t\biggr\} \LeinsKonv 0.
	\end{equation*}
	To see this, observe that $\Den^{-1/2}\{\Den\sum_{i=\theta_n+1}^{[t/\Den]-L+1} \mu_f(c((i-1)\Den))-V_f(Y,t)\} \LeinsKonv 0$ by \cite[Lemma 11.2.7]{Jacod12}. At the same time, note that changing $c((i-1)\Den)$, $\la((i-1)\Den)$ and $\pi((i-1)\Den)$ to $c(s)$, $\la(s)$ and $\pi(s)$ for $s\in((i-1)\Den,i\Den)$ induces an error of order $\sqrt{\Den}$, respectively, by Assumption \ref{Ass:Bprime}. Because there is at least a factor of $\Den^{H-1/2}$ present in $\cala^{\prime n}_i$, the mean-value theorem shows that 
	\begin{equation*}\bbe \biggl[ \sup_{t\in[0,T]} {\biggl\lvert \Den^{-\frac12}\biggl(\Den\sum_{i=\theta_n+1}^{[t/\Den]-L+1} \cala^{\prime n}_i-\cala^{\prime n}_t\biggr)\biggr\rvert} \biggr] \lec \Den^{-\frac12}\Den^{\frac12+(H-\frac12)}\to0.\qedhere\end{equation*}
\end{proof}

\section{Technical results for the proof of Theorem~\ref{thm:CLT:mixedSM}}\label{appn}

We start with some estimates on the kernel $g$.
\begin{Lemma} \label{lem:est:fBm:kernel}
	Recall the definitions in \eqref{kernel:g}, \eqref{const:C:H}, \eqref{nu:La} and \eqref{num:Ga} and assume that $H\in(0,1)\setminus\{\frac12\}$ and $g_0\equiv0$.
	\begin{enumerate}
		\item For any $k, n \in \bbn$,
		\begin{equation}  \label{est:fBm:kernel:1}
			\int_0^{\infty} \Delta^n_k g(t)^2 \, \dd t  = K_H^{-2} \left\{ \frac{1}{2H} + \int_{1}^{k} \left( r^{H -\frac{1}{2}} - (r-1)^{H -\frac{1}{2}} \right)^2 \, \dd r \right\} \Delta_n^{2H}.
		\end{equation}
		\item For any $k, \ell, n \in \bbn$ with $k < \ell$,
		\begin{equation} \label{est:fBm:kernel:2}
			\int_{- \infty}^{\infty} \Delta^n_k g(t) \Delta^n_\ell g(t) \, \dd t = 
			\Den^{2H} \Ga^H_{\ell - k}.
		\end{equation}
		\item 
		For $r \geq 0$,
		\begin{equation} \label{est:fBm:kernel:trunc:2}
			\int_{0}^{\Den} \frac{\Delta^n_{r + 1} g(s)}{\Den^{2H\wedge 1}} \, \dd s
			= \Den^{\lvert H-\frac12\rvert} 2^{-\bone_{\{r=0\}}}\Phi^H_r 
		\end{equation}
		\item For any $\theta \in (0,1)$, setting $\theta_n = [\Delta_n^{-\theta}]$, we have for any $i > \theta_n$ and $r \in \bbn$,
		\begin{equation} \label{est:fBm:kernel:trunc}
			\int_{-\infty}^{(i - \theta_n) \Den} \Delta^n_i g(s) \Delta^n_{i + r} g(s) \, \dd s \lesssim \Den^{2H+\theta(2-2H)}.
		\end{equation}
	\end{enumerate}
\end{Lemma}

\begin{proof} The third statement follows from a direct calculation. All other statements follow from \cite[Lemma~B.1]{Chong21} (the proof of which applies to both $H<\frac12$ and $H>\frac12$).
\end{proof}

\begin{proof}
[Proof of Proposition~\ref{prop:1}]	Note that the left-hand side of \eqref{1st:trunc:CLT} is given by
\begin{equation*} 
	\Delta_n^{\frac{1}{2}} \sum_{i = 1}^{\theta_n}  f \biggl( \frac{\un{\Delta}_i^n  Y}{\sqrt{\Delta_n}} \biggr) +
	\DenOneHalf \sum_{i = \theta_n + 1}^{[t/\Delta_n]-L+1} \Bigl\{\de_i^{n} -\bbe [ \de_i^{n} \mid \calf^n_{i - \theta_n}]\Bigr\} + \DenOneHalf \sum_{i = \theta_n + 1}^{[t/\Delta_n]-L+1} \bbe [ \de_i^{n} \mid \calf^n_{i - \theta_n}],
	\end{equation*}
where
$\delta^n_i = f  (  {\un{\Delta}_i^n  Y}/{\sqrt{\Delta_n}}  ) - 
	f  (  {\un{\Delta}_i^n Y^{\mathrm{tr}}}/{\sqrt{\Delta_n}}  )$.
Clearly, the first term in the previous display is of size $\Delta_n^{1/2-\theta}$ and hence negligible by \eqref{theta:value}.  Regarding the second term, note that	 $\vert f(z) - f(z') \vert \lesssim (1 +P(z) + P(z')) \Vert z - z' \Vert$ by Assumption~\ref{Ass:B}, where $P$ is a polynomial. 
So by the contraction property of conditional expectations and  a size estimate,
	\begin{equation*}
	\bbe \Bigl[  (\de_i^{n} -\bbe [ \de_i^{n} \mid \calf^n_{i - \theta_n}] )^2 \Bigr] \leq 4 \bbe  [ (\de_i^{n})^2  ]\lec \Den^{-1} \int_0^{(i-\theta_n)\Den}\lVert \un\Delta^n_i g(s)\rVert^2\,\dd s
	  \lesssim \Den^{2H-1+2\theta(1-H)},
	\end{equation*}	
where the last step follows from \cite[(B.3)]{Chong21}.
Moreover,  $\ov{\de}_i^{n}$ is $\calf^n_{i + L -1}$-measurable with vanishing $\calf^n_{i - \theta_n}$-conditional expectation. Therefore,  a martingale size estimate shows that 
	\begin{equation*}
	\bbe \biggl[\sup_{t\in[0, T]} \biggl \lvert \DenOneHalf \sum_{i = \theta_n + 1}^{[t/\Delta_n]-L+1} \Bigl\{\de_i^{n} -\bbe [ \de_i^{n} \mid \calf^n_{i - \theta_n}]\Bigr\}  \biggr \rvert  \biggr] 
	\lesssim  \theta_n^{\frac12} \Den^{H-\frac12+\theta(1-H)} \leq 
	\Den^{(H-\frac12) (1 - \theta  )}\to0.
	\end{equation*}
	
	Next, define
	\begin{align*}
\zeta^n_i &= \frac{\un{\Delta}_i^n  Y - \un{\Delta}_i^n Y^{\mathrm{tr}}} { \sqrt{\Delta_n}} =	\frac1{\sqrt{\Den}}\int_{0}^{(i - \theta_n)\Den}  \rho_s \, \dd W_s \, \uDeni g(s),\\
	\xi^n_i & = \si_{(i- \theta_n) \Den} \frac{\uDeni B }{\sqrt{\Den}}+ \frac{1}{\sqrt{\Den}}\int_{(i - \theta_n) \Den}^{(i + L - 1) \Den} \rho_{(i - \theta_n) \Den} \, \dd W_s \, \uDeni g(s).
	\end{align*}
	Then Taylor's theorem implies that
	\begin{equation*}
	\begin{split}
	\de_i^{n} & = \sum_{\al}\pd_\al f \biggl( \frac{\uDeni Y^{\mathrm{tr}}}{\sqrt{\Delta_n}} \biggr) (\zeta^n_i)_{\al} + 
	\frac{1}{2} \sum_{\al, \beta}  \pd^2_{\al\beta} f ( \eta_i^{n,1} )  (\zeta^n_i)_{\al} (\zeta^n_i)_{\beta} \\ & =
	\sum_{\al} \pd_\al f  (  \xi^n_i   ) (\zeta^n_i)_{\al} + 
	\frac{1}{2} \sum_{\al, \beta} \pd^2_{\al\beta} f (\eta_i^{n,2}) 
	(\zeta^n_i)_{\al} \biggl( \frac{\uDeni Y^{\mathrm{tr}}}{\sqrt{\Delta_n}}-\xi^n_i\biggr)_{\beta} \\
 &\quad+	
	\frac{1}{2} \sum_{\al, \beta} \pd^2_{\al\beta} f (\eta_i^{n,1}) (\zeta^n_i)_{\al} (\zeta^n_i)_{\beta}, 
	\end{split}
	\end{equation*}
	where $\al, \beta \in \{ 1,\ldots,d \} \times \{1, \ldots, L\}$ and $\eta_i^{n,1}$ (resp., $\eta_i^{n,2}$) is some point on the line between 
	$\uDeni Y / \DenSquared $ and $\uDeni Y^{\mathrm{tr}} / \DenSquared $ (resp., $\un\Delta^n_i Y^{\mathrm{tr}} / \DenSquared $ and $\xi^n_i  $). If $\de_i^{n,1}$, $\de_i^{n,2}$ and $\de_i^{n,3}$ denote the last three terms in the previous display, then
	\begin{equation*}
	\DenOneHalf \sum_{i=\theta_n + 1}^{[t/\Delta_n]-L+1} \mathbb{E}  [ \de_i^{n} \mid \mathcal{F}^n_{i - \theta_n}  ] = \sum_{j=1}^3 D_j^n(t) \quad \textrm{with} \quad 
	D_j^n(t) = \DenOneHalf \sum_{i=\theta_n + 1}^{[t/\Delta_n]-L+1} \mathbb{E}  [ \de_i^{n,j} \mid \mathcal{F}^n_{i - \theta_n}  ].
	\end{equation*}
	Since $\zeta^n_i$ is $\calf^n_{i - \theta_n}$-measurable,  
$
	\mathbb{E}  [ \pd_\al f  ( \xi^n_i) (\zeta^n_i)_{\al} \mid \mathcal{F}^n_{i - \theta_n}  ]  =
	(\zeta^n_i)_{\al} \mathbb{E}  [ \pd_\al f  (  \xi^n_i ) \mid \mathcal{F}^n_{i - \theta_n}  ] = 0
$
	because $\xi^n_i$ has a centered normal distribution given $\mathcal{F}^n_{i - \theta_n}$ and $f$ has odd first partial derivatives (as $f$ is even). It follows that $D_1^n(t) \equiv 0$. For the two remaining terms, we combine  size estimates with \cite[(B.3)]{Chong21} and the Hölder properties of $\sigma$ and $\rho$ to obtain
	\begin{equation*}
	\begin{split}
	\bbe \biggl[ \sup_{t\in[0, T]}   \vert D^n_2(t)   \vert \biggr] & \lesssim 
	\Den^{-\frac12+(H-\frac12)+\theta(1-H)} 
	 \Bigl[\Den^{\frac12}+ (\theta_n \Den)^{\frac{1}{2}}\Bigr] \leq 2 \Den^{(H-\frac12) (1 - \theta  )} \to 0,  \\
	\bbe \biggl[ \sup_{t\in[0, T]}   \vert D^n_3(t)   \vert \biggr] & \lesssim 
	\DenMinOneHalf  \Den^{2(H-\frac12+\theta(1-H))}  = 
	\Den^{- \frac32+2H+ \theta (2-2H)}\to 0,
	\end{split}
	\end{equation*}
where the last step follows from \eqref{theta:value}.
\end{proof}

\begin{proof}[Proof of Lemma~\ref{lem:approx:6}]
	The left-hand side of \eqref{eq:help} is given by $H^n_1(t) + H^n_2(t)$ where
	\begin{equation*}
	\begin{split}
	H^n_1(t) & = \Delta_n^{\frac{1}{2}} \sum_{i = \theta_n + 1}^{[t/\Delta_n]-L+1} \sum_{\al} 
	\mathbb{E} \biggl[\partial_\al f\biggl(\frac{\un{\Delta}_i^n Y^{\mathrm{dis}}}{\sqrt{\Delta_n} }  \biggr)(\kappa_i^n)_{\al} \mathrel{\Big \vert} \mathcal{F}^n_{i - \theta_n} \biggr],\\
	H^n_2(t) & = \Delta_n^{\frac{1}{2}}  \sum_{i = \theta_n + 1}^{[t/\Delta_n]-L+1} \sum_{\al, \beta}  \frac{1}{2^{\bone_{\{\al=\beta\}}}}
	\mathbb{E} \Bigl[ \partial^2_{\al\beta}f  (\xi_i^{\prime n,\dd}   ) (\kappa_i^n)_{\al} (\kappa_i^n)_{\beta} \mathrel{\big \vert} \mathcal{F}^n_{i - \theta_n} \Bigr].	
	\end{split}
	\end{equation*}
 Here $\kappa_i^n =(\un{\Delta}_i^n Y^{\mathrm{tr}}- \un{\Delta}_i^n Y^{\mathrm{dis}})/\DenSquared$, $\xi_i^{\prime n,\dd}$ is some point on the line between $\un{\Delta}_i^n Y^{\mathrm{tr}}/\DenSquared$ and $\un{\Delta}_i^n Y^{\mathrm{dis}}/\DenSquared$, and $\al$ and $\beta$ run through $\{1, \ldots, N\} \times \{1, \ldots, L\}$. 
	
	For $\al = (k,\ell)$, we have
	\begin{equation} \label{dec:lem:approx:6}
 \begin{split} 
	(\kappa^n_i)_{\al} &= \frac{\Delta^n_{i + \ell -1} A^k}{\DenSquared} 
	+ \frac{1}{\DenSquared} \int_{(i + \ell - 2) \Delta_n}^{(i + \ell - 1) \Delta_n} \sum_{\ell' = 1}^{d'}  ( \si_s^{k \ell'} - \si^{k \ell'}_{(i - 1) \Den}  ) \, \dd B_s^{\ell'} \\ 
&\quad	+ \frac{1}{\sqrt{\Den}}  \sum_{q=1}^{Q} \int_{(i - \theta^{(q-1)}_n) \Den}^{(i-\theta^{(q)}_n) \Den}  {\Delta^n_{i + \ell -1} g(s)} 
  \sum_{\ell' = 1}^{d'}
	 ( \rho_s^{k \ell'} - \rho^{k \ell'}_{(i-\theta_n^{(q-1)})\Den}  )  \, \dd B^{\ell'}_s.
	\end{split}
	\end{equation}
Since $\theta^{(q)}<\frac12$ for all $q$, we have
$
	\bbe  [ \sup_{t \in[0, T]} \bv H^n_2(t) \bv  ] \lesssim \Den^{-1/2}  ( 2\Den^{1/2}   + \Den^{H-1/4})^2 \to 0
$
by a size estimate.
Splitting $H^n_1(t) = H^n_{11}(t) + H^n_{12}(t) + H^n_{13}(t)$ into three parts according to   \eqref{dec:lem:approx:6}, we can use another size estimate to show that $H^n_{13}(t)$ is of order $\Den^{H-1}\sum_{q=1}^{Q} (\theta_n^{(q-1)}\Den)^{1/2}\Den^{\theta^{(q)}(1-H)}$, which tends to $0$ by \eqref{iter:CLT}.
	In order to show that $H^n_{11}(t)$ and $H^n_{12}(t)$ are asymptotically negligible, we need further arguments. 

Concerning $H^n_{11}(t)$, we decompose, for fixed $\al = (k,\ell)$,  
	\begin{equation} \label{NR3}
	\begin{split}
	& \Delta_n^{\frac{1}{2}} \sum_{i = \theta_n + 1}^{[t/\Delta_n]-L+1}
	\mathbb{E} \biggl[ \partial_\al f\biggl(\frac{\un{\Delta}_i^n Y^{\mathrm{dis}}}{\sqrt{\Delta_n} }  \biggr)  \frac{\Delta^n_{i + \ell -1} A^k}{\DenSquared}  \mathrel{\Big \vert} \mathcal{F}^n_{i - \theta_n} \biggr] \\ & \qquad = 
	\Delta_n^{\frac{1}{2}} \sum_{i = \theta_n + 1}^{[t/\Delta_n]-L+1}
	\mathbb{E} \biggl[ \partial_\al f\biggl(\frac{\un{\Delta}_i^n Y^{\mathrm{dis}}}{\sqrt{\Delta_n} }  \biggr)
	\frac{1}{\DenSquared} \int_{(i + \ell - 2) \Delta_n}^{(i + \ell - 1) \Delta_n}  (a^k_s - a^k_{(i - \theta_n) \Delta_n} ) \, \dd s
	\mathrel{\Big \vert} \mathcal{F}^n_{i - \theta_n} \biggr] \!\!\!\!\\  &\qquad\quad+ 
	\Delta_n^{\frac{1}{2}} \sum_{i = \theta_n + 1}^{[t/\Delta_n]-L+1}
	\mathbb{E} \biggl[ \partial_\al f\biggl(\frac{\un{\Delta}_i^n Y^{\mathrm{dis}}}{\sqrt{\Delta_n} }  \biggr)
	\DenSquared a^k_{(i - \theta_n) \Delta_n}
	\mathrel{\Big \vert}  \mathcal{F}^n_{i - \theta_n} \biggr].
	\end{split}\raisetag{-3.75\baselineskip}
	\end{equation}	
	By the contraction property of conditional expectations and  Hölder's inequality, the uniform $L^1$-norm up to  time $T$ of the first term on the right-hand side is bounded  by
	\begin{align*}
	& C 
	\sum_{i = \theta_n + 1}^{[T/\Delta_n]-L+1}
	\mathbb{E} \biggl[ \biggl\lvert \int_{(i + \ell - 2) \Delta_n}^{(i + \ell - 1) \Delta_n}  (a^k_s - a^k_{(i - \theta_n) \Delta_n} ) \, \dd s \biggr\rvert^p \biggr]^{\frac{1}{p}}\\
	&\qquad\lec\Delta_n^{1 - \frac{1}{p}}  \sum_{i = \theta_n + 1}^{[T/\Delta_n]-L+1}
	\mathbb{E} \biggl[  \int_{(i + \ell - 2) \Delta_n}^{(i + \ell - 1) \Delta_n} \lvert a^k_s - a^k_{(i - \theta_n) \Delta_n} \rvert^p \, \dd s \biggr]^{\frac{1}{p}}
	\\ & \qquad\lec 
	\biggl(\sum_{i = \theta_n + 1}^{[T/\Delta_n]-L+1} \mathbb{E} \biggl[ \int_{(i + \ell - 2) \Delta_n}^{(i + \ell - 1) \Delta_n} \lvert a^k_s - a^k_{(i - \theta_n) \Delta_n} \rvert^p \, \dd s \biggr]\biggr)^{\frac{1}{p}} \\ & \qquad= 
	  \mathbb{E} \biggl[ \int_{ (\theta_n+\ell - 1) \Delta_n}^{([T/\Delta_n]-L + \ell ) \Delta_n} 
	  \lvert a^k_s - a^k_{\left( [  {s}/{\Den}  ] - (\ell - 2 + \theta_n) \right) \Delta_n}    \rvert^p \, \dd s \biggr]^{\frac{1}{p}} 
	\end{align*}
for some $p>2$.
Since $a$ is càdlàg and bounded, the last integral above vanishes by dominated convergence as $\nto$.

Next, we use Taylor's theorem to write the second term on the right-hand side of \eqref{NR3} as 
	\begin{multline*} 
	\Delta_n  \sum_{i = \theta_n + 1}^{[t/\Delta_n]-L+1} a^k_{(i - \theta_n) \Delta_n}\biggl\{
	\mathbb{E} \biggl[ \partial_\al f \biggl(\frac{\si_{(i-1) \Den}\uDeni B}{\sqrt{\Delta_n} }\biggr)  
	\mathrel{\Big \vert} \mathcal{F}^n_{i - \theta_n} \biggr] \\ +  \sum_{\beta}
	\mathbb{E} \Big[ \pd^2_{\al\beta}  ( \xi''^{n,\dd}_i  ) \xi^{n,\dd}_i  	\mathrel{\Big \vert}  \mathcal{F}^n_{i - \theta_n} \Big]\biggr\},
	\end{multline*}
	where $\xi''^{n,\dd}_i$ is a point   between $\un{\Delta}^n_i Y^{\mathrm{dis}} / \sqrt{\Delta_n}$ and $\si_{(i-1) \Den}  \uDeni B/\sqrt{\Delta_n}$. A simple size estimate shows that the second part vanishes as $n\to\infty$. By first conditioning on $\calf^n_{i-1}$, we further see that the first conditional expectation is identically zero, because $\si_{(i-1) \Den} \uDeni B / \sqrt{\Delta_n}$ has a centered normal distribution given $\calf^n_{i-1}$ and $\pd_\al f$ is an odd function. 
	
Thus,	it remains to analyze $H^n_{12}(t)$. To keep the presentation simple, and since the general case does not involve any additional arguments, we assume $d=d'=k=\ell=L=1$. Then
	\begin{equation*}
	\si_s  - \si_{(i - 1) \Den} =  ( \si_s^{(0)} - \si^{(0)}_{(i - 1) \Den}   ) +
	\int_{(i-1) \Den}^{s} \wt{b}_r \, \dd r + \int_{(i-1) \Den}^{s}   \wt{\si}_r \, \dd B_r.
	\end{equation*}
	We then split $H^n_{12}(t)$ into $H^{n,1}_{12}(t) + H^{n,2}_{12}(t) + H^{n,3}_{12}(t)$ where
	\begin{align*}
	H^{n,1}_{12}(t) & =
	\Delta_n^{\frac{1}{2}} \sum_{i = \theta_n + 1}^{[t/\Delta_n]} \sum_{k=1}^d  
	\mathbb{E} \biggl[ f'\biggl(\frac{ {\Delta}^n_i Y^{\mathrm{dis}}}{\sqrt{\Delta_n} }\biggr) 
	\frac{1}{\DenSquared} \int_{(i -1) \Delta_n}^{i \Delta_n}  ( \si_s^{(0) } - \si^{(0) }_{(i - 1) \Den}  ) \, \dd B_s 
\mathrel{\bigg \vert}\mathcal{F}^n_{i - \theta_n} \biggr], \\
	H^{n,2}_{12}(t) & =
	\Delta_n^{\frac{1}{2}} \sum_{i = \theta_n + 1}^{[t/\Delta_n]}  
	\mathbb{E} \biggl[ f' \biggl(\frac{ {\Delta}^n_i Y^{\mathrm{dis}}}{\sqrt{\Delta_n} }\biggr) 
	\frac{1}{\DenSquared} \int_{(i -1) \Delta_n}^{i \Delta_n}  \biggl( 
	\int_{(i - 1) \Den}^{s} \wt{b}_r  \, \dd r
	\biggr) \, \dd B_s 
	\mathrel{\bigg \vert} \mathcal{F}^n_{i - \theta_n} \biggr], \\
	H^{n,3}_{12}(t) & =
	\Delta_n^{\frac{1}{2}} \sum_{i = \theta_n + 1}^{[t/\Delta_n]}  
	\mathbb{E} \biggl [ f' \biggl(\frac{ {\Delta}^n_i Y^{\mathrm{dis}}}{\sqrt{\Delta_n} }\biggr) 
	\frac{1}{\DenSquared} \int_{(i -1) \Delta_n}^{i\Delta_n}   \biggl( 
	\int_{(i - 1) \Den}^{s}   \wt{\si}_r  \, \dd B_r	\biggr) \, \dd B_s 
	\mathrel{\bigg \vert} \mathcal{F}^n_{i - \theta_n} \biggr ].
	\end{align*}
Given Assumption~\ref{Ass:Bprime},
	a size estimate shows that $H^{n,1}_{12}(t)$ and $H^{n,2}_{12}(t)$ are  of size $\Den^{\ga -  {1}/{2}}$ and $\Den^{1/2}$, respectively, both of which go to $0$ as $\nto$ (since $\ga > \frac12$). 
	Thus, the last term to be considered is
	 $ H^{n,3}_{12}(t)$, for which we must resolve to a last decomposition. We write $ H^{n,3}_{12}(t) = H^{n,3,1}_{12}(t) + H^{n,3,2}_{12}(t) + H^{n,3,3}_{12}(t)$ with
	\begin{align*}
	H^{n,3,1}_{12}(t) & = \mathtoolsset{multlined-width=0.8\displaywidth} \begin{multlined}[t] \Delta_n^{\frac{1}{2}} \sum_{i = \theta_n + 1}^{[t/\Delta_n]} 
	\mathbb{E} \biggl [ f'\biggl(\frac{{\Delta}^n_i Y^{\mathrm{dis}}}{\sqrt{\Delta_n} }\biggr)  \\
\times	\frac{1}{\DenSquared} \int_{(i -1) \Delta_n}^{i \Delta_n}  
	\int_{(i - 1) \Den}^{s}   (\wt{\si}_r  - \wt{\si}_{(i - 1) \Den} ) \, \dd B_r  \, \dd B_s 
\mathrel{\Big \vert} \mathcal{F}^n_{i - \theta_n} \biggr ],\end{multlined} \\
	H^{n,3,2}_{12}(t) & = \mathtoolsset{multlined-width=0.8\displaywidth} \begin{multlined}[t] \Delta_n^{\frac{1}{2}} \sum_{i = \theta_n + 1}^{[t/\Delta_n]} 
	\mathbb{E} \bigg [ \biggl\{f' \biggl(\frac{{\Delta}^n_i Y^{\mathrm{dis}}}{\sqrt{\Delta_n} }\biggr) - 
f' \biggl(\frac{ \si_{(i-1)\Den}  {\Delta}^n_i B}{\sqrt{\Delta_n} }\biggr) \biggr\} \\  
\times	\frac{1}{\DenSquared} \int_{(i -1) \Delta_n}^{i \Delta_n}  
	\int_{(i - 1) \Den}^{s}  \wt{\si}_{(i - 1) \Den} \, \dd B_r	  \, \dd B_s 
	\mathrel{\Big \vert} \mathcal{F}^n_{i - \theta_n} \bigg ], \end{multlined}\\ 
	H^{n,3,3}_{12}(t) & = \mathtoolsset{multlined-width=0.8\displaywidth} \begin{multlined}[t] \Delta_n^{\frac{1}{2}} \sum_{i = \theta_n + 1}^{[t/\Delta_n]}  
	\mathbb{E} \bigg [ f' \biggl( \frac{ \si_{(i-1)\Den} {\Delta}^n_i B}{\sqrt{\Delta_n} }\biggr) \\
\times	\frac{1}{\DenSquared} \int_{(i -1) \Delta_n}^{i \Delta_n} 
	\int_{(i - 1) \Den}^{s}  \wt{\si}_{(i - 1) \Den} \, \dd B_r  \, \dd B_s 
	\mathrel{\Big \vert} \mathcal{F}^n_{i - \theta_n} \bigg ].\end{multlined}
	\end{align*}

By the Hölder properties of $\wt \si$, we can use a size estimate to see that $H^{n,3,1}_{12}(t)$ is of size $\Den^{-1/2} \Den^{ {1}/{2} + \eps'} = \Den^{\eps'}$, which vanishes for large $n$. Similarly,
since 
	$( {\Delta}^n_i Y^{\mathrm{dis}} - \si_{(i-1)\Den}  {\Delta}^n_i B)/\sqrt{\Delta_n} = \xi^{n,\dd}_i$, we can use the mean-value theorem and a size estimate  to see that $H^{n,3,2}_{12}(t)$ is of size $\Den^{-1/2}\Den^{H-1/2}\Den^{1/2}$, which goes as $\nto$ by \eqref{iter:CLT}.	Finally, if we first condition on $\calf^n_{i-1}$ in $H^{n,3,3}_{12}(t)$, then because $f$ is even and $\si_{(i-1)\Den}  {\Delta}^n_i B/ \sqrt{\Delta_n}$ is centered Gaussian given $\calf^n_{i - 1}$, it follows that 
	$f' (\si_{(i-1)\Den}  {\Delta}^n_i B/ \sqrt{\Delta_n})$ belongs to the direct sum of all Wiener chaoses of odd orders. At the same time, the double stochastic integral  in $H^{n,3,3}_{12}(t)$ belongs to the second Wiener chaos (see  \cite[Proposition 1.1.4]{Nual}). Since Wiener chaoses are mutually orthogonal, conditioning on $\calf^n_{i - 1}$ shows that $H^{n,3,3}_{12}(t)\equiv0$. 
\end{proof}
 
\section{Proof of Theorem~\ref{thm:CLT2}}\label{app:B}

If $\la(s)\equiv0$, Theorem~\ref{thm:CLT2} is exactly \cite[Theorem~3.1]{Chong21}. If $\la(s)\not\equiv0$, there are only two places in the proof that require modifications: one is the proof of Lemma~C.2 and the other is the statement of Lemma~C.3 in the reference. Concerning the latter, the only difference is that we now have an additional term $\la((i-1)\Den)_{k\ell,k'\ell'}\Den^{1/2-H}$ in \cite[Equation~(C.7)]{Chong21}. Note that the same term (with $\frac12-H$ replaced by $H-\frac12$) appeared in  \eqref{eq:vni} in the case where $H>\frac12$. Thus, a straightforward combination of the proof of Proposition~\ref{prop:4} with the proof of \cite[Lemma~C.3]{Chong21} yields an ``update'' of the latter that covers the case $\la(s)\not\equiv0$.

In the following, we detail how to prove the statement of \cite[Lemma~C.2]{Chong21} if $\la(s)\not\equiv0$. As shown at the beginning of the proof of \cite[Lemma~C.2]{Chong21}, it suffices to prove $\ov U^n\LeinsKonv0$, where
\beq\label{eq:Unt}	\begin{split} \ov U^n(t)&= \DenOneHalf \sum_{i=\theta_n + 1}^{[t/\Den]-L+1} \Bigg\{ 
	f \bigg( \frac{\si_{(i-1)\Den} \un\Delta^n_i B + \xi^{n,\di}_i}{\Den^H} \bigg) - f \bigg( \frac{ \xi^{n,\di}_i }{\Den^H} \bigg) \\
&\quad	- 	\bbe \bigg[f \bigg( \frac{\si_{(i-1)\Den} \un\Delta^n_i B + \xi^{n,\di}_i}{\Den^H} \bigg) - f \bigg( \frac{ \xi^{n,\di}_i }{\Den^H} \bigg) \mathrel{\Big\vert}  \calf^n_{i - \theta_n} \bigg] \Bigg\} \end{split}\eeq
and 
	$ \xi^{n,\di}_i=\int_{(i-\theta_n)\Den}^{(i+L-1)\Den} \rho_{(i-\theta_n)\Den} \,\dd B_s\,\un\Delta^n_i g(s)$. We further decompose $ \xi^{n,\di}_i= \xi^{n,1}_i+ \xi^{n,2}_i$, where
	$$ \xi^{n,1}_i= \int_{(i-\theta_n)\Den}^{(i-1)\Den} \rho_{(i-\theta_n)\Den} \,\dd B_s\,\un\Delta^n_i g(s),\qquad \xi^{n,2}_i= \int_{(i-1)\Den}^{(i+L-1)\Den} \rho_{(i-\theta_n)\Den} \,\dd B_s\,\un\Delta^n_i g(s). $$

Since $(N(H)+1)(\frac12-H)>\frac12$ by \eqref{eq:NH}, applying Taylor's theorem twice, we obtain
\begin{align*}
	&	f \bigg( \frac{\si_{(i-1)\Den} \un\Delta^n_i B + \xi^{n,\di}_i}{\Den^H} \bigg) - f \bigg( \frac{ \xi^{n,\di}_i }{\Den^H} \bigg) \\
	&\qquad=\sum_{j=1}^{N(H)} \sum_{\lvert \chi\rvert =j}\frac1{\chi!} \partial^\chi f\bigg( \frac{ \xi^{n,\di}_i }{\Den^H} \bigg) \bigg( \frac{\si_{(i-1)\Den} \un\Delta^n_i B }{\Den^H} \bigg)^\chi + o_\bbp(\Den^{\frac12})\\
	&\qquad=\sum_{j,j'=1}^{N(H)} \sum_{\lvert \chi\rvert =j}\sum_{\lvert \chi\rvert =j'}\frac1{\chi!\chi'!} \partial^{\chi+\chi'} f\bigg( \frac{ \xi^{n,1}_i }{\Den^H} \bigg)\bigg( \frac{ \xi^{n,2}_i }{\Den^H} \bigg)^{\chi'} \bigg( \frac{\si_{(i-1)\Den} \un\Delta^n_i B }{\Den^H} \bigg)^\chi + o_\bbp(\Den^{\frac12}).
\end{align*}
For each $i$, if we condition on $\calf^n_{i-1}$, It\^o's representation theorem implies that
\beq\label{eq:Ito}\begin{split} &\bigg( \frac{ \xi^{n,2}_i }{\Den^H} \bigg)^{\chi'} \bigg( \frac{\si_{(i-1)\Den} \un\Delta^n_i B }{\Den^H} \bigg)^\chi \\ &\qquad =\E\biggl[\bigg( \frac{ \xi^{n,2}_i }{\Den^H} \bigg)^{\chi'} \bigg( \frac{\si_{(i-1)\Den} \un\Delta^n_i B }{\Den^H} \bigg)^\chi \mathrel{\Big|}\calf^n_{i-1}\biggr]+\int_{(i-1)\Den}^{(i+L-1)\Den} \zeta^{n,\chi,\chi'}_i(s)\,\dd B_s \end{split}\eeq
for some process $\zeta^{n,\chi,\chi'}_i(s)$ that is $O_\bbp(\Den^{-H})$, uniformly in $\chi$, $\chi'$, $i$ and $s$, because $\lvert \chi\rvert\geq1$. The stochastic integral in the last line is $\calf^n_{i+L-1}$-measurable with a zero $\calf^n_{i-1}$-conditional expectation. Therefore, by a martingale size estimate, its contribution to $\ov U^n$ is of magnitude $\Den^{1/2-H}$, which is negligible. Therefore, it remains to consider the conditional expectation in \eqref{eq:Ito}. Note that 
\begin{align*}
	&\E\biggl[\bigg( \frac{ \xi^{n,2}_i }{\Den^H} \bigg)^{\chi'} \bigg( \frac{\si_{(i-1)\Den} \un\Delta^n_i B }{\Den^H} \bigg)^\chi \mathrel{\Big|}\calf^n_{i-1}\biggr]\\
	 &\qquad= \E\biggl[\bigg( \frac{ \xi^{n,2}_i }{\Den^H} \bigg)^{\chi'} \bigg( \frac{\si_{(i-\theta_n)\Den} \un\Delta^n_i B }{\Den^H} \bigg)^\chi \mathrel{\Big|}\calf^n_{i-1}\biggr] + O_\bbp({(\theta_n\Den)^{\frac12}})\\
	 &\qquad= \E\biggl[\bigg( \frac{ \xi^{n,2}_i }{\Den^H} \bigg)^{\chi'} \bigg( \frac{\si_{(i-\theta_n)\Den} \un\Delta^n_i B }{\Den^H} \bigg)^\chi \mathrel{\Big|}\calf^n_{i-\theta_n}\biggr] + O_\bbp({(\theta_n\Den)^{\frac12}}),
\end{align*}
with exact equality between the last two conditional expectations. Clearly, the last conditional expectation does not contribute to $\ov U^n$, since it is removed by the second line of \eqref{eq:Unt}. So the proof is complete upon noticing that the contribution of the $O_\bbp({(\theta_n\Den)^{1/2}})$-term to $\ov U^n$ is negligible by a martingale size estimate. \halmos

\section{Additional simulation results}\label{app:sim}

In this appendix, we report additional simulation results in a setting where $\si$ and $\rho$ are fixed (instead of the signal-to-noise ratio). Everything  is chosen and defined in the same way as in Section~\ref{sec:sim} with the following exceptions: We take $\si=0.02$ and $\rho=0.02/\sqrt{10}$ if $H<\frac12$ and $\si=0.02/\sqrt{10}$ and $\rho=0.02$ if $H>\frac12$. Moreover, as the noise can become small now as $H\to1$, we do not set $\mathrm{score}(H)=\infty$ if in the estimated model, noise accounts for less than 1\% of the return variance. Tables~\ref{tab:5}--\ref{tab:8} summarize the results in this new setup.

The results for $H_n$ are similar to those in Table~\ref{tab:1}, except that for $\la=0.5$ and $\la=0.9$, the bias is  smaller now if $H<\frac12$ and bigger if $H>\frac12$. This makes sense because with fixed $\rho$, the noise accounts for a higher (resp., lower) percentage of the total return variance if $H$ is small (resp., large), making inference of $H$ easier (resp., more difficult). The situation is different for $C_n$: since $\rho$ is fixed, estimation of volatility faces two challenges at the same time. On the one hand, if $H$ is small, volatility can no longer be estimated consistently according to our theory. On the other hand, if $H$ gets closer to $\frac12$, separating the fractional and the Brownian components becomes increasingly difficult (see the discussion at the end of Section~\ref{sec:lb}). This is why in Table~\ref{tab:6}, we can see that the performance of $C^n$ worsens both as $H$ gets close to $0$ and as $H$ gets close to $\frac12$, with this effect being stronger for positive values of $\lambda$. Regarding estimation of $\lambda$ itself, we can see as in Section~\ref{sec:sim} that estimation is easier for negative values of $\la$ but now also if $H$ is small, leading to a good performance on the upper-left part of Table~\ref{tab:7}. Finally, as we can see from Table~\ref{tab:8}, the estimator $\Pi_n$ performs well if $H$ is small or larger but close to $\frac12$, with better results for negative values of $\lambda$.

\begin{table*}
	\caption{Median and interquartile range  of $H_n$ based on 1{,}000 simulated paths.}
	\label{tab:5}
	\begin{tabular}{@{}cccccc@{}}
		& \multicolumn{5}{c}{$\la$}\\
		\hline 
		$H$ & \multicolumn{1}{c}{$-0.9$}&\multicolumn{1}{c}{$-0.5$}
		& \multicolumn{1}{c}{0} & \multicolumn{1}{c}{$0.5$}&\multicolumn{1}{c}{$0.9$}
		\\
		\hline
		$0.1$  & 0.1001 &  						0.1001&   					0.1001   & 					0.1002& 					0.1010  \\
		~ &  [0.0979, 0.1031]  & [0.0968, 0.1034] & [0.0966,  0.1039]& [0.0977, 0.1042]& [0.0993,   0.1044] \smallskip \\
		$0.2$    & 0.2000 &						0.2000   & 					0.2000    &					0.2001&  				0.2022  \\
		~ &[0.1956,     0.2046]&[0.1939,  0.2058]&[0.1916,  0.2076]&[0.1933,  0.2095]&[0.1982,   0.2112] \smallskip \\
		$0.3$  &   0.3000   &   				0.2999   &   				0.2999  &    					0.3004  &  					0.3064   \\
		~& [0.2948,  0.3049]  &[0.2867,  0.3115]  & [0.2710, 0.3208]  & [0.2835, 0.3309] & [0.2967, 0.3383] \smallskip \\
		$0.4$  &   0.3996  &  					0.3958   &    				0.3979  &  					0.4218 &   						0.4317 \\
		~& [0.3787,  0.4200]  & [0.2733, 0.4641] & [0.3516, 0.4916]  & [0.3806,  0.5053]  & [0.3955,  0.5111]\smallskip \\
		$0.5$ &   &  & 0.5000 &  & \\
		~&   &  & [0.5000,0.5000] &  &\smallskip \\
		$0.6$  & 0.5996 &  						0.5970&   					0.5948   & 					0.5838& 					0.5732  \\
		~ &  [0.5880, 0.6119]  & [0.5639, 0.6558] & [0.5394,  0.6306]& [0.5223, 0.6113]& [0.5116,   0.6010] \smallskip \\
		$0.7$    & 0.7001 &						0.6985   & 					0.6969    &					0.6918&  				0.6802  \\
		~ &[0.6901,     0.7099]&[0.6762,  0.7244]&[0.6532,  0.7783]&[0.6334,  0.7323]&[0.6205,   0.7047] \smallskip \\
		$0.8$  &   0.8010   &   				0.7997   &   				0.7974  &    					0.7735  &  					0.7534   \\
		~& [0.7691,  0.8264]  &[0.7509,  0.8441]  & [0.7221, 0.9220]  & [0.6934, 0.8178] & [0.6791, 0.7879] \smallskip \\
		$0.9$  &   0.8911  &  					0.9002   &    				0.7437  &  					0.8160 &   						0.7956 \\
		~& [0.8527,  0.9219]  & [0.7834, 0.9734] & [0.5000, 0.9780]  & [0.7006,  0.8887]  & [0.6963,  0.8558]\smallskip \\
		\hline
	\end{tabular}
\end{table*}

\begin{table*}
	\caption{Median and interquartile range  of $C^{[19,20]}_n/\si^2$ based on 1{,}000 simulated paths.}
	\label{tab:6}
	\begin{tabular}{@{}cccccc@{}}
		& \multicolumn{5}{c}{$\la$}\\
		\hline 
		$H$ & \multicolumn{1}{c}{$-0.9$}&\multicolumn{1}{c}{$-0.5$}
		& \multicolumn{1}{c}{0} & \multicolumn{1}{c}{$0.5$}&\multicolumn{1}{c}{$0.9$}
		\\
		\hline
		$0.1$  & 1.1205 &  						1.0224&   					1.0854   & 					1.2041& 					1.4105  \\
		~ &  [0.6911, 1.8850]  & [0.2629, 1.9885] & [0.2288,  2.1059]& [0.5416, 2.3145]& [0.8991,   2.5605] \smallskip \\
		$0.2$    & 0.9998 &						1.0129   & 					1.0062    &					1.0993&  				1.2680  \\
		~ &[0.7152,     1.3542]&[0.5855,  1.4849]&[0.4822,  1.6647]&[0.6873,  1.8406]&[0.9197,   2.0637] \smallskip \\
		$0.3$  &   0.9964   &   				0.9950   &   				1.0260  &    					1.1928  &  					1.3398   \\
		~& [0.8904,  1.1313]  &[0.7486,  1.3184]  & [0.7168, 1.5686]  & [0.8755, 1.8944] & [0.9798, 2.1662] \smallskip \\
		$0.4$  &   0.9796  &  					0.9339   &    				1.3600  &  					2.0031 &   						2.6196 \\
		~& [0.7744,  1.3489]  & [0.7267, 3.4177] & [0.9757, 7.7919]  & [1.1330,  12.5047]  & [1.2537,  15.6350]\smallskip \\
		$0.5$ &   &  & 1.0006 &  & \\
		~&   &  & [0.9942,1.0070] &  &\smallskip \\
		$0.6$  & 1.0247 &  						1.0804&   					1.5038   & 					2.3624& 					3.3730  \\
		~ &  [0.6932, 1.4451]  & [0.5475, 3.4861] & [0.8567,  11.9466]& [1.0950, 31.4528]& [1.2548,   40.4074] \smallskip \\
		$0.7$    & 0.9995 &						1.0032   & 					1.0183    &					1.0879&  				1.1341  \\
		~ &[0.9527,     1.0548]&[0.9245,  1.1143]&[0.9502,  1.1973]&[0.9906,  1.3255]&[0.9882,   1.4884] \smallskip \\
		$0.8$  &   1.0040   &   				1.0011   &   				1.0083  &    					1.0243  &  					1.0237   \\
		~& [0.9734,  1.0394]  &[0.9692,  1.0436]  & [0.9861, 1.0540]  & [0.9828, 1.0821] & [0.9742, 1.1045] \smallskip \\
		$0.9$  &   1.0078  &  					1.0038   &    				1.0055  &  					1.0128 &   						1.0119 \\
		~& [0.9930,  1.0252]  & [0.9871, 1.0290] & [0.9953, 1.0206]  & [0.9888,  1.0533]  & [0.9830,  1.0619]\smallskip \\
		\hline
	\end{tabular}
\end{table*}

\begin{table*}
	\caption{Median and interquartile range  of $\La_n/\sqrt{C_n\Pi_n}$ (defined as $0$ if $\Pi_n=0$) based on 1{,}000 simulated paths. }
	\label{tab:7}
	\begin{tabular}{@{}cccccc@{}}
		& \multicolumn{5}{c}{$\la$}\\
		\hline 
		$H$ & \multicolumn{1}{c}{$-0.9$}&\multicolumn{1}{c}{$-0.5$}
		& \multicolumn{1}{c}{0} & \multicolumn{1}{c}{$0.5$}&\multicolumn{1}{c}{$0.9$}
		\\
		\hline
		$0.1$  & -0.9130 &  						-0.5415&   					-0.0022   & 					0.4947& 					0.8934  \\
		~ &  [-0.9998, -0.8791]  & [-0.6219, -0.4669] & [-0.2094,  0.4411]& [0.1439, 1.0000]& [0.4077,   1.0000] \smallskip \\
		$0.2$    & -0.9055 &						-0.4986   & 					0.0004    &					0.4948&  				0.8875  \\
		~ &[-0.9181,     -0.8934]&[-0.5836,  -0.3861]&[-0.2166,  0.3359]&[0.1079,  0.9999]&[0.3431,   1.0000] \smallskip \\
		$0.3$  &   -0.8994   &   				-0.5003   &   				-0.0085  &    					0.4986  &  					0.9108   \\
		~& [-0.9122,  -0.8860]  &[-0.6107,  -0.3427]  & [-0.3566, 0.6503]  & [-0.2141, 0.9998] & [-0.1568, 0.9999] \smallskip \\
		$0.4$  &   -0.8991  &  					-0.4771   &    				0.0549  &  					0.5632 &   						0.5502 \\
		~& [-0.9251,  -0.8742]  & [-0.9071, 0.2298] & [-0.9277, 0.7623]  & [-0.9263,  0.9942]  & [-0.9356,  0.9974]\smallskip \\
		$0.6$  & -0.9013 &  						-0.5357&   					-0.1335   & 					0.1554& 					0.2608  \\
		~ &  [-0.9310, -0.8597]  & [-0.8635, 0.5909] & [-0.9319,  0.9977]& [-0.9630, 0.9981]& [-0.9626,   0.9982] \smallskip \\
		$0.7$    & -0.9010 &						-0.5013   & 					-0.0178    &					0.3951&  				0.6199  \\
		~ &[-0.9029,     -0.8989]&[-0.5617,  -0.4484]&[-0.3057,  0.5923]&[-0.2607,  0.9602]&[-0.3045,   1.0000] \smallskip \\
		$0.8$  &   -0.9147   &   				-0.5159   &   				-0.0105  &    					0.3719  &  					0.5060   \\
		~& [-0.9453,  -0.8897]  &[-0.5441,  -0.4991]  & [-0.1827, 0.4771]  & [-0.0893, 0.6337] & [-0.0772, 0.7574] \smallskip \\
		$0.9$  &   -0.9988  &  					-0.8723   &    				0.0000  &  					0.2020 &   						0.2100 \\
		~& [-0.9999,  -0.9503]  & [-1.0000, -0.4769] & [-0.1706, 0.1241]  & [-0.1490,  0.4384]  & [-0.1413,  0.5077]\smallskip \\
		\hline
	\end{tabular}
\end{table*}

\begin{table*}
	\caption{Median and interquartile range  of $\Pi_n/(20\rho^2)$ based on 1{,}000 simulated paths.}
	\label{tab:8}
	\begin{tabular}{@{}cccccc@{}}
		& \multicolumn{5}{c}{$\la$}\\
		\hline 
		$H$ & \multicolumn{1}{c}{$-0.9$}&\multicolumn{1}{c}{$-0.5$}
		& \multicolumn{1}{c}{0} & \multicolumn{1}{c}{$0.5$}&\multicolumn{1}{c}{$0.9$}
		\\
		\hline
		$0.1$  & 1.0034 &  						1.0015& 					  1.0006   & 				1.0023& 					1.0243  \\
		~ &  [0.9521, 1.0769]  & [0.9257, 1.0872] & [0.9206,  1.0968]& [0.9454, 1.1051]& [0.9825,   1.1117] \smallskip \\
		$0.2$    & 1.0027 &						0.9974   & 					1.0006    &				1.0018&  				1.0565  \\
		~ &[0.8924,     1.1230]&[0.8500,  1.1664]&[0.8002,  1.2292]&[0.8289,  1.2977]&[0.9521,   1.3675] \smallskip \\
		$0.3$  &   1.0023   &   			1.0005   &   				0.9923  &    				0.9878  &  					1.1980   \\
		~& [0.8586,  1.1562]  &[0.6625,  1.4375]  & [0.3942, 1.9990]  & [0.5689, 2.8596] & [0.8900, 3.8269] \smallskip \\
		$0.4$  &   0.9888  &  					0.8557   &    				0.7420  &  				1.9439 &   						4.5755 \\
		~& [0.4800,  2.1476]  & [0.0096, 28.9549] & [0.0986, 109.5318]  & [0.3489,  160.2848]  & [0.8019,  196.3857]\smallskip \\
		$0.6$  & 1.0066 &  						1.0344& 					  1.0434   & 				0.9439& 					1.0730  \\
		~ &  [0.9526, 1.0821]  & [0.7128, 1.6410] & [0.6644,  3.0806]& [0.8191, 4.0318]& [0.9700,   4.2804] \smallskip \\
		$0.7$    & 1.0017 &						0.9969   & 					1.0364    &				1.0590&  				1.1455  \\
		~ &[0.9178,     1.0935]&[0.8803,  1.1761]&[0.9405,  1.1606]&[0.9048,  1.2191]&[0.9799,   1.4538] \smallskip \\
		$0.8$  &   0.9999   &   			0.9977   &   				0.9899  &    				0.8877  &  					0.9944   \\
		~& [0.6222,  1.4662]  &[0.5992,  1.6541]  & [0.6365, 1.8533]  & [0.7572, 1.0697] & [0.8877, 1.1227] \smallskip \\
		$0.9$  &   0.7287  &  					0.9678   &    				0.3615  &  				0.6399 &   						0.7288 \\
		~& [0.3366,  1.5829]  & [0.1343, 2.4936] & [0.0000, 1.7291]  & [0.4181,  1.0834]  & [0.5455,  1.0401]\smallskip \\
		\hline
	\end{tabular}
\end{table*}

\end{appendix}

\end{document}